\newtheorem{prop}{Proposition}[section]
\newtheorem{thm}[prop]{Theorem}
\newtheorem{cor}[prop]{Corollary}
\newtheorem{lem}[prop]{Lemma}
\theoremstyle{definition}
\newtheorem{defn}[prop]{Definition}
\newtheorem{expl}[prop]{Example}
\newtheorem{rem}[prop]{\it Remark}
\newtheorem*{claim*}{Claim}
\newcommand{\bP}{\mathbb{P}}
\newcommand{\bC}{\mathbb{C}}
\newcommand{\bR}{\mathbb{R}}
\newcommand{\bA}{\mathbb{A}}
\newcommand{\bQ}{\mathbb{Q}}
\newcommand{\bZ}{\mathbb{Z}}
\newcommand{\bN}{\mathbb{N}}
\newcommand{\bG}{\mathbb{G}}
\newcommand{\tX}{\widetilde{X}}
\newcommand{\tdelta}{\widetilde{\delta}}
\newcommand{\cX}{\mathcal{X}}
\newcommand{\cO}{\mathcal{O}}
\newcommand{\cL}{\mathcal{L}}
\newcommand{\cI}{\mathcal{I}}
\newcommand{\cM}{\mathcal{M}}
\newcommand{\cF}{\mathcal{F}}
\newcommand{\cJ}{\mathcal{J}}
\newcommand{\cW}{\mathcal{W}}
\newcommand{\fa}{\mathfrak{a}}
\newcommand{\fb}{\mathfrak{b}}
\newcommand{\kbar}{\bar{k}}
\newcommand{\Xkb}{X_{\bar{k}}}
\newcommand{\Deltakb}{\Delta_{\bar{k}}}
\newcommand{\Vkb}{V_{\bar{k}}}
\newcommand{\Ykb}{Y_{\bar{k}}}
\newcommand{\Zkb}{Z_{\bar{k}}}
\newcommand{\ra}{\mathrm{a}}
\newcommand{\Spec}{\mathbf{Spec}}
\newcommand{\Supp}{\mathrm{Supp}}
\newcommand{\mult}{\mathrm{mult}}
\newcommand{\lct}{\mathrm{lct}}
\newcommand{\Ex}{\mathrm{Ex}}
\newcommand{\Pic}{\mathrm{Pic}}
\newcommand{\Aut}{\mathrm{Aut}}
\newcommand{\ord}{\mathrm{ord}}
\newcommand{\Val}{\mathrm{Val}}
\newcommand{\Nklt}{\mathrm{Nklt}}
\newcommand{\Gr}{\mathrm{Gr}}
\newcommand{\Fut}{\mathrm{Fut}}
\newcommand{\Gal}{\mathrm{Gal}}
\newcommand{\Cosupp}{\mathrm{Cosupp}}
\newcommand{\Dtc}{\Delta_{\mathrm{tc}}}
\newcommand{\ocX}{\overline{\mathcal{X}}}
\newcommand{\ocL}{\overline{\mathcal{L}}}
\newcommand{\oDtc}{\overline{\Delta}_{\mathrm{tc}}}
\numberwithin{equation}{section}
\title{Optimal destabilizing centers and equivariant K-stability}
\author{Ziquan Zhuang}
\address{Department of Mathematics, MIT, Cambridge, MA, 02139.}
\email{ziquan@mit.edu}
\date{}
\begin{document}

\maketitle

\begin{abstract}
    We give an algebraic proof of the equivalence of equivariant K-semistability (resp. equivariant K-polystability) with geometric K-semistability (resp. geometric K-polystability). Along the way we also prove the existence and uniqueness of minimal optimal destabilizing centers on K-unstable log Fano pairs.
\end{abstract}

\section{Introduction}

K-stability (see \cites{Tian-K-stability-defn,Don-K-stability-defn}) is an algebraic condition that detects the existence of K\"ahler-Einstein metric on complex Fano varieties. By a well-known result of Matsushima \cite{Mat-aut-of-KE-Fano}, on a K\"ahler-Einstein Fano manifold, the canonical metric is invariant under some maximal compact subgroup of the automorphism group. This motivates the folklore conjecture that a complex Fano variety (or more generally, a log Fano pair) with a group action is K-semistable (resp. K-polystable) if and only if it is equivariantly K-semistable (resp. equivariantly K-polystable). This conjecture has been confirmed on smooth Fano manifolds with a reductive group action \cite{DS-equivariant-KE} and on log Fano pairs with a finite group action \cite{LZ-equivariant}. Despite the algebraic nature of the statement, the proofs in these two cases rely on deep analytic results and an algebraic proof of the conjecture is only known when the group action is given by a torus \cites{LX-valuation-stability,LWX-Kps-degeneration}. On the other hand, if the log Fano pair is defined over a smaller field $k$ (not necessarily algebraically closed), it comes with an induced Galois action. A variant of the above conjecture then predicts that it is enough to test the K-semistability (resp. K-polystability) of the log Fano pair using test configurations that are defined over $k$. To our knowledge, there has been very little progress in this direction.

In this paper, we give a complete answer to these conjectures using purely algebraic argument. Our main result is as follows.

\begin{thm}[=Corollary \ref{cor:equivariant}] \label{main:equivariant K=K}
Let $k$ be a field of characteristic zero and let $\kbar$ be its algebraic closure. Let $G$ be an algebraic group and let $(X,\Delta)$ be a log Fano pair with an action of $G$ over $k$. Let $(\Xkb,\Deltakb):=(X,\Delta)\times_k \Spec(\kbar)$.
\begin{enumerate}
    \item If $(X,\Delta)$ is $G$-equivariantly K-semistable, then $(\Xkb,\Deltakb)$ is K-semistable.
    \item If $G$ is reductive and $(X,\Delta)$ is $G$-equivariantly K-polystable, then $(\Xkb,\Deltakb)$ is K-polystable.
\end{enumerate}
\end{thm}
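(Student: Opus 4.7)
The plan is to argue both parts by contradiction, using the promised existence and uniqueness of a minimal optimal destabilizing center (MODC) to bridge geometric and equivariant K-stability. Via the Fujita--Li valuative criterion, statement (1) amounts to proving that $\delta_G(X,\Delta) = \delta(\Xkb,\Deltakb)$, where $\delta_G$ is the $\delta$-invariant computed over $G$-invariant divisorial valuations defined over $k$.

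For (1), suppose $\delta := \delta(\Xkb,\Deltakb) < 1$. By the known existence of $\delta$-minimizers, some quasi-monomial valuation $v$ attains this infimum; its center on $\Xkb$ is an optimal destabilizing center. The main technical theorem of the paper produces a unique minimal such center $Z\subset\Xkb$. Because $\delta$, the minimizing property, and the centering operation are all equivariant under both $\Gal(\kbar/k)$ and $G(\kbar)$, each of these groups permutes the optimal destabilizing centers, and uniqueness forces each to fix $Z$. The combined invariance descends $Z$ to a $G$-invariant closed subscheme of $X$ over $k$. Working on a $G$-equivariant log resolution over $k$, I approximate the minimizer by a quasi-monomial family of $G$-invariant $k$-valuations centered on $Z$, and perturb to a divisorial member $v'$ satisfying $A_{X,\Delta}(v')/S_{X,\Delta}(v') < 1$. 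This yields $\delta_G(X,\Delta) < 1$, contradicting $G$-equivariant K-semistability.

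For (2), assume $G$ is reductive and that $(\Xkb,\Deltakb)$ is K-semistable but not K-polystable. By the now-standard theory of K-polystable degenerations, there is an essentially unique non-product special degeneration $\cX$ of $(\Xkb,\Deltakb)$ to a K-polystable limit, with vanishing generalized Futaki invariant. Uniqueness forces the associated valuation to be both $\Gal(\kbar/k)$- and $G(\kbar)$-invariant. Reductivity of $G$ then permits an equivariant descent producing a $G$-equivariant non-product special test configuration over $k$ with vanishing Futaki invariant, contradicting $G$-equivariant K-polystability.

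The principal obstacle is the uniqueness half of the MODC statement. Given two optimal destabilizing centers, the strategy is to interpolate the corresponding minimizing valuations in a quasi-monomial family, then extract a new minimizer whose center is strictly smaller than both, exploiting convexity of $A$ and appropriate behaviour of $S$ along such families. Putting the two valuations on a common $G$-equivariant snc model and controlling how centers vary within the family is where most of the effort lies. In (2), the final descent uses reductivity of $G$ in an essential way: without it, one cannot in general spread a $\Gal$-invariant $G(\kbar)$-equivariant special test configuration over $\kbar$ to one over $k$ with the same symmetries.
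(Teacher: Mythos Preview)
There is a genuine gap in both parts of your proposal, and it lies precisely where the real work is.

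\textbf{Part (1).} You correctly observe that the minimal optimal destabilizing center $Z$ is $\Gal(\kbar/k)\times G(\kbar)$-invariant and hence descends to a $G$-invariant subvariety of $X$ over $k$. But the step ``approximate the minimizer by a quasi-monomial family of $G$-invariant $k$-valuations centered on $Z$, and perturb to a divisorial member $v'$ with $A/S<1$'' does not go through. The minimizing valuation $v$ over $\kbar$ has $G$-invariant \emph{center}, but $v$ itself need not be $G$-invariant, and there is no mechanism by which invariance of the center alone produces a $G$-invariant divisorial valuation with small $A/S$. Taking a $G$-equivariant log resolution does not help: the quasi-monomial valuations living on a fixed snc model with $G$-invariant center are not themselves $G$-invariant in general (think of a torus acting on $\bA^2$ with weights $(1,-1)$: the origin is fixed, the coordinate axes are swapped by the Weyl group, and the only invariant monomial valuation is $\ord_0$). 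Producing a $G$-invariant geometrically irreducible divisor $E$ over $X$ with $A_{X,\Delta}(E)/S(E)<1$ is exactly the content of Theorem~\ref{thm:G-delta=delta}, and the paper proves it by a genuinely different route: an induction on dimension (Theorem~\ref{thm:G-delta=delta for boundary}) in which one first finds a $G$-invariant plt-type divisor $E$ computing the \emph{equivariant} $\delta$ at $Z$ (via Lemma~\ref{lem:G-filt vs G-div for delta} and Lemma~\ref{lem:G-kollar-comp}), then passes to the filtered restriction of the boundary to $E$ and applies the inductive hypothesis there, using inversion of adjunction in the style of \cite{AZ-K-adjunction} to compare geometric and equivariant $\delta$-invariants.

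\textbf{Part (2).} Your claim ``uniqueness forces the associated valuation to be both $\Gal(\kbar/k)$- and $G(\kbar)$-invariant'' is not correct. What is unique is the K-polystable \emph{limit} $(X_0,\Delta_0)$, not the special test configuration realizing it: there are typically many non-isomorphic test configurations degenerating $(\Xkb,\Deltakb)$ to $(X_0,\Delta_0)$, and the group acts on this set without any a priori fixed point. The paper instead works in the Hilbert scheme: it shows the ${\rm PGL}_{N+1}$-orbit $W_0$ of $[(X_0,\Delta_0)]$ is closed in the orbit closure $W$ of $[(X,\Delta)]$ and is defined over $k$, and then invokes Kempf's instability theorem \cite{Kempf}*{Corollary 4.5} to produce a one-parameter subgroup in the centralizer $Z_G({\rm PGL}_{N+1})$, defined over $k$, driving $[(X,\Delta)]$ into $W_0$. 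This is where reductivity of $G$ is actually used, and it is a GIT argument rather than a descent argument.

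As a minor point, your sketch of the MODC uniqueness (interpolating two minimizers in a quasi-monomial family and extracting a smaller center by convexity) is also not how the paper proceeds: it instead intersects the valuation ideals $\fa_m(v_1)\cap\fa_m(v_2)$ to form a graded sequence, uses Lemma~\ref{lem:non-klt center seq of ideals} and connectedness to find a valuation centered on $Z_1\cap Z_2$, and checks it is again $\delta$-minimizing via compatible basis-type divisors.
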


Note that in the K-semistable part we allow the group $G$ to be non-reductive; this can happen for automorphism groups of K-semistable Fano varieties, see \cite{CS-KE-V22}*{Example 1.4}. On the other hand, there exist Fano varieties that are equivariantly K-polystable with respect to their automorphism group but not K-polystable (see Example \ref{ex:non-reductive}), thus the reductivity assumption in the K-polystable part is necessary. Since every K-polystable log Fano pair has a reductive automorphism group \cite{ABHLX}, this seems to be a natural assumption to add to the statement.

In fact, we prove a more precise statement than Theorem \ref{main:equivariant K=K}. Recall that the K-semistability of a log Fano pair is characterized by its stability threshold (or $\delta$-invariant), see \cites{FO-delta,BJ-delta} or Section \ref{sec:prelim-beta & delta}. A priori, to define the stability threshold of a log Fano pair $(X,\Delta)$, one needs to take into account all divisors on various birational models of $X$. We show that in the K-unstable case, it is enough to consider divisors that not only have the same field of definition but also are invariant under the automorphism group. This is the key to our proof of Theorem \ref{main:equivariant K=K}.

\begin{thm}[=Theorem \ref{thm:G-delta=delta}] \label{main:G-delta=delta}
Let $(X,\Delta)$ be a log Fano pair defined over a field $k$ of characteristic zero and let $G=\Aut(X,\Delta)$. Assume that $(\Xkb,\Deltakb)$ is not K-semistable. Then we have 
\[
\delta(\Xkb,\Deltakb)=\inf_E \frac{A_{X,\Delta}(E)}{S(E)}
\]
where the infimum runs over all $G$-invariant geometrically irreducible divisors $E$ over $X$ that are lc places of complements.
\end{thm}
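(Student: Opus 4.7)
The plan is to combine a reduction of the infimum defining $\delta$ to lc places of $N$-complements (for some uniform $N$) with a uniqueness statement for the minimal optimal destabilizing center, which will automatically force invariance under both the Galois action of $\Gal(\kbar/k)$ and the action of $G$. Since $(\Xkb,\Deltakb)$ is not K-semistable we have $\delta:=\delta(\Xkb,\Deltakb)<1$, so the infimum of $A/S$ is strictly less than $1$ and near-minimizers will enjoy strong boundedness properties not available in the semistable case.

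First, by Birkar's boundedness of complements combined with the now-standard characterization of $\delta$ via lc places of complements, one can choose a uniform $N$ so that for all sufficiently small $\varepsilon>0$, every prime divisor $E$ over $\Xkb$ with $A_{\Xkb,\Deltakb}(E)/S(E)<\delta+\varepsilon$ arises as an lc place of some $N$-complement $(\Xkb,\Deltakb+D)$. This replaces the a priori vast set of divisors in the infimum by a much more rigid, essentially bounded class, for which the possible centers on $\Xkb$ come from a finite list attached to each such $N$-complement.

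The heart of the argument is to establish the existence and uniqueness of a \emph{minimal} optimal destabilizing center $Z\subset\Xkb$, namely a smallest closed subvariety that contains the center of every near-optimal divisor of the above type. Existence is a compactness statement as $\varepsilon\to 0$, given the finiteness of candidate centers at each level. The crux, and the main obstacle, is uniqueness: this should follow from finite generation of the graded sequence of ideals associated to a minimizing valuation (in the spirit of the work of Liu--Xu--Zhuang on the normalized volume), together with the observation that if two distinct candidate minimal centers existed, one could combine the associated degenerations --- via a two-step or product degeneration --- to produce a further near-optimal divisor whose center is strictly smaller than both, a contradiction. Once $Z$ is pinned down, one extracts a canonical divisor $E^*$ (either achieving $\delta$ or an $\varepsilon$-minimizer) whose center on $\Xkb$ is exactly $Z$.

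Finally, the groups $\Gal(\kbar/k)$ and $G(\kbar)$ act on $\Xkb$ preserving both $A_{\Xkb,\Deltakb}$ and $S$, and hence permute the set of near-optimal lc places of complements. By the canonicity established in the previous step, this action must fix $Z$ and the distinguished divisor $E^*$. Galois invariance implies that $E^*$ descends to a divisor over $X$; the fact that $E^*$ is a single irreducible divisor over $\Xkb$, rather than a nontrivial Galois orbit, yields geometric irreducibility; and $G$-invariance is immediate. Letting $\varepsilon\to 0$ then produces a sequence of $G$-invariant geometrically irreducible lc places of complements whose $A/S$-ratios approach $\delta(\Xkb,\Deltakb)$, completing the proof.
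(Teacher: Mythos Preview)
There is a genuine gap in the passage from the minimal optimal destabilizing center $Z$ to a canonical divisor $E^*$. Uniqueness of the center $Z$ does \emph{not} yield uniqueness of a divisor over $\Xkb$ computing (or nearly computing) $\delta$: the set of valuations with center $Z$ achieving $A/S=\delta$ can be large, and the action of $\Gal(\kbar/k)\times G(\kbar)$ will in general permute it nontrivially. Your sentence ``by the canonicity established in the previous step, this action must fix $Z$ and the distinguished divisor $E^*$'' conflates the canonicity of $Z$ with a canonicity of $E^*$ that was never established. Without a mechanism that singles out a \emph{specific} divisorial valuation in an equivariant way, you cannot conclude that $E^*$ descends to $k$ or is $G$-invariant. (Incidentally, your proposed argument for the uniqueness of $Z$ via finite generation and product degenerations is also quite different from, and less direct than, the paper's approach via Koll\'ar--Shokurov connectedness; but even granting the conclusion about $Z$, the gap above remains.)

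The paper avoids this problem entirely: it never attempts to show that some minimizer is automatically invariant. Instead it works level by level with $\delta_m$ and, for each sufficiently large $m$, \emph{constructs} a $G$-invariant geometrically irreducible divisor $E_m$ computing $\delta_m$. The construction is an induction on dimension: after locating a $G$-invariant geometrically irreducible center $Z$ at which the local $\delta$ is attained, one produces a $G$-invariant plt-type divisor $E$ over $Z$ that computes the \emph{$G$-equivariant} local $\delta$ (via an equivariant Koll\'ar-component argument), and then shows by adjunction and the inductive hypothesis on $E$ that this $G$-equivariant $\delta$ in fact coincides with the geometric one. The key point is that $E$ is built equivariantly from the outset, rather than deduced to be invariant after the fact.
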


Here $A_{X,\Delta}(E)$ is the log discrepancy of $E$ with respect to the pair $(X,\Delta)$ and $S(E)$ is the expected vanishing order of $-(K_X+\Delta)$ along the divisor $E$, see Definition \ref{defn:invariants}. As a corollary, we obtain an algebraic proof of a generalization of Tian's criterion \cites{Tian-criterion,OS-alpha} (see Section \ref{sec:prelim-beta & delta} for the definition of the $G$-alpha invariant $\alpha_G(X,\Delta)$ of a log Fano pair).

\begin{cor}[=Corollary \ref{cor:Tian's criterion}] \label{main:Tian's criterion}
Let $G$ be an algebraic group and let $(X,\Delta)$ be a log Fano pair of dimension $n$ with a $G$-action over a field $k$ of characteristic zero. 
\begin{enumerate}
    \item If $\alpha_G(X,\Delta)\ge \frac{n}{n+1}$, then $(\Xkb,\Deltakb)$ is K-semistable.
    \item If $G$ is reductive and $\alpha_G(X,\Delta) > \frac{n}{n+1}$, then $(\Xkb,\Deltakb)$ is K-polystable.
\end{enumerate}
\end{cor}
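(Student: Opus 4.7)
The plan is to deduce both statements from Theorem \ref{main:equivariant K=K} together with a $G$-equivariant Odaka--Sano estimate $\delta_G(X,\Delta) \ge \tfrac{n+1}{n}\alpha_G(X,\Delta)$, where $\delta_G$ denotes the $G$-equivariant stability threshold.

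I would first establish the estimate by adapting the standard two-step argument. For any $G$-invariant prime divisor $E$ over $X$, one has the universal concavity bound $S(E) \le \tfrac{n}{n+1}T(E)$ with $T(E)$ the pseudo-effective threshold. It then suffices to prove $\alpha_G(X,\Delta) \le A_{X,\Delta}(E)/T(E)$, which by the definition of $\alpha_G$ reduces to producing $G$-invariant effective $\bQ$-divisors $D \sim_\bQ -(K_X+\Delta)$ with $\ord_E(D)$ arbitrarily close to $T(E)$: for such $D$ one has $\alpha_G \le \lct(X,\Delta;D) \le A_{X,\Delta}(E)/\ord_E(D)$. Combining the two bounds gives
\[
\frac{A_{X,\Delta}(E)}{S(E)} \ge \frac{n+1}{n}\cdot\frac{A_{X,\Delta}(E)}{T(E)} \ge \frac{n+1}{n}\alpha_G(X,\Delta),
\]
and the claimed estimate follows after taking the infimum over $G$-invariant $E$.

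With the estimate in hand, the two parts of the corollary follow cleanly. Under the hypothesis of (1), $\delta_G \ge 1$, so $(X,\Delta)$ is $G$-equivariantly K-semistable and Theorem \ref{main:equivariant K=K}(1) yields K-semistability of $(\Xkb,\Deltakb)$. Under the hypothesis of (2), $\delta_G > 1$ gives $G$-equivariant uniform K-stability, and hence $G$-equivariant K-polystability; since $G$ is reductive, Theorem \ref{main:equivariant K=K}(2) concludes. The main obstacle is the $G$-invariant approximation step above: since $E$ is $G$-invariant, the $\ord_E$-filtration on $H^0(X,-m(K_X+\Delta))$ is $G$-stable for each $m\gg 0$, and for reductive $G$ one extracts invariants using the Reynolds operator. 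In the non-reductive setting needed for part (1), one decomposes $G$ into its unipotent radical and reductive quotient and invokes Lie--Kolchin to produce nonzero fixed vectors on each filtered piece. (Theorem \ref{main:G-delta=delta} enters only indirectly, through its role in the proof of Theorem \ref{main:equivariant K=K}.)
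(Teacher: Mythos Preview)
Your core estimate $A_{X,\Delta}(E)/S(E) \ge \tfrac{n+1}{n}\alpha_G(X,\Delta)$ for $G$-invariant $E$ is exactly what the paper uses, but you have over-complicated both halves of the argument.

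First, the ``main obstacle'' you identify is not an obstacle. The invariant $\alpha_G$ is defined via $G$-invariant \emph{linear systems}, not $G$-invariant divisors (see the definition preceding the remark that $\alpha_G \le A_{X,\Delta}(v)/T(v)$ for $G$-invariant $v$). Since $E$ is $G$-invariant, each filtered piece $\cF^{\lambda}_{\ord_E}H^0(X,-m(K_X+\Delta))$ is itself a $G$-invariant linear system, and plugging it into the definition of $\alpha_G$ directly yields $\alpha_G \le A_{X,\Delta}(E)/T(E)$. There is no need to extract a $G$-fixed vector, so the Reynolds/Lie--Kolchin detour is unnecessary (and would in any case require care for disconnected $G$).

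Second, your route through $G$-equivariant K-semistability inserts an implication (``$\delta_G \ge 1 \Rightarrow (X,\Delta)$ is $G$-equivariantly K-semistable'') that is the converse of Proposition~\ref{prop:G-valuative criterion} and is not established in the paper. The paper avoids this entirely: once $A_{X,\Delta}(E) \ge S(E)$ (resp.\ $>$) holds for every $G$-invariant geometrically irreducible divisor $E$, Corollary~\ref{cor:valuative criterion} yields geometric K-semistability (resp.\ K-polystability) directly, via Theorem~\ref{thm:G-delta=delta} and the argument of Corollary~\ref{cor:equivariant}, without ever passing through equivariant K-stability. So Theorem~\ref{main:G-delta=delta} enters the paper's proof more directly than you suggest.
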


As another application, we recover the K-stability of Fermat hypersurfaces, as well as smooth complete intersections of two quadrics \cites{Tian-Fermat,AGP-two-quadric}. We remark that the previous proof of this fact involves analytic argument but our proof here is completely algebraic. Combined with \cites{Xu-quasimonomial,BLX-openness}, this also gives an algebraic proof that a general Fano hypersurface is K-semistable (in fact K-stable if the degree is at least $3$).

\begin{cor}[=Corollary \ref{cor:fermat and two quadric}] \label{main:fermat and two quadric}
The following Fano manifolds are K-stable:
\begin{enumerate}
    \item Fermat hypersurfaces $(x_0^d+\cdots+x_{n+1}^d=0)\subseteq \bP^{n+1}$ $(3\le d\le n+1)$.
    \item Complete intersection of two quadrics $Q_1\cap Q_2\subseteq \bP^{n+2}$.
\end{enumerate}
\end{cor}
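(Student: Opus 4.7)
The plan is to invoke Corollary \ref{main:Tian's criterion}(2) with a reductive subgroup $G \le \Aut(X)$ large enough to force the $G$-equivariant alpha invariant strictly above $\frac{n}{n+1}$. The corollary then yields geometric K-polystability of $\Xkb$. Since the automorphism groups in question are finite---for smooth Fermat of degree $d \ge 3$ by Matsumura--Monsky, and for smooth intersections of two quadrics by a classical pencil argument (the automorphism group permutes the $n+3$ singular members of the pencil with kernel $\mu_2^{n+3}/\mu_2$)---the absence of nontrivial one-parameter subgroups of $\Aut$ automatically upgrades K-polystability to K-stability.

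For the Fermat hypersurface $X = \{x_0^d + \cdots + x_{n+1}^d = 0\} \subseteq \bP^{n+1}$ with $3 \le d \le n+1$, I would take $G := (\mu_d^{n+2}/\mu_d) \rtimes S_{n+2}$, the natural finite reductive group generated by the diagonal $d$-th root action and the coordinate permutations. Any $G$-invariant effective $\bQ$-divisor $D \sim_\bQ -K_X$ is, after clearing denominators, cut out by a $G$-invariant homogeneous polynomial $F$: the $\mu_d^{n+2}$-invariance forces $F \in k[y_0,\ldots,y_{n+1}]$ with $y_i := x_i^d$, the $S_{n+2}$-invariance then makes $F$ a symmetric polynomial in the $y_i$, and modulo the relation $\sum_i y_i = 0$ on $X$ we may present $F$ via the higher elementary symmetric functions $\sigma_2,\ldots,\sigma_{n+2}$. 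A direct multiplicity estimate at the $G$-fixed flags---essentially Tian's original alpha-invariant calculation in \cite{Tian-Fermat}---yields $\alpha_G(X) \ge 1 > \frac{n}{n+1}$ whenever $d \ge 3$.

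For a smooth complete intersection $X = Q_1 \cap Q_2 \subseteq \bP^{n+2}$, after base change to $\kbar$ I would simultaneously diagonalize the pencil $\{\lambda Q_1 + \mu Q_2\}$: smoothness forces $n+3$ distinct singular members, so one may assume $Q_1 = \sum_i x_i^2$ and $Q_2 = \sum_i \lambda_i x_i^2$ with pairwise distinct $\lambda_i$. The finite abelian group $G := \mu_2^{n+3}/\mu_2$ of coordinate sign changes then lies in $\Aut(X)$ and is reductive. A $G$-invariant effective $\bQ$-divisor $D \sim_\bQ -K_X = (n-1)H|_X$ is cut out by a polynomial in the $y_i := x_i^2$, taken modulo the two linear relations $\sum_i y_i = \sum_i \lambda_i y_i = 0$ on $X$. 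The alpha-invariant estimate of \cite{AGP-two-quadric} (or a direct multiplicity computation analogous to the Fermat case, using both quadric relations to eliminate two of the $y_i$) then gives $\alpha_G(X) > \frac{n}{n+1}$.

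The principal new ingredient is the reduction from geometric to equivariant K-stability via Corollary \ref{main:Tian's criterion}(2); with that in hand, the only remaining obstacle is the explicit $\alpha_G$ bound. For Fermat this is elementary symmetric-function bookkeeping combined with a multiplicity estimate at coordinate flags, and is essentially classical. For two quadrics the acting group is smaller---only the diagonal $\mu_2^{n+3}/\mu_2$---so the multiplicity estimate is tighter and one has to exploit both linear quadric relations on the $y_i$; this is where I expect the most delicate technical work.
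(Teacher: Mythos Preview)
Your strategy via Corollary \ref{main:Tian's criterion} is different from the paper's, and for part (2) it contains a genuine gap. With $G=\mu_2^{\,n+3}/\mu_2$ acting by coordinate sign changes, the hyperplane section $C_0:=X\cap(x_0=0)$ is a smooth $G$-invariant divisor and $(n-1)C_0\sim_\bQ -K_X$; hence
\[
\alpha_G(X)\;\le\;\lct\bigl(X;(n-1)C_0\bigr)\;=\;\frac{1}{n-1},
\]
which is strictly below $\frac{n}{n+1}$ as soon as $n\ge 3$. For a generic smooth $Q_1\cap Q_2$ the group $\mu_2^{\,n+3}/\mu_2$ is already the full automorphism group, so you cannot enlarge $G$ to repair the bound. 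Thus Tian's criterion simply does not apply here, and the ``delicate technical work'' you anticipate is not merely delicate---it is impossible with this $G$. (For Fermat your claim $\alpha_G(X)\ge 1$ with the full $(\mu_d^{\,n+2}/\mu_d)\rtimes S_{n+2}$ is correct and known, but it is a nontrivial lct computation, not just bookkeeping; you would need to supply or cite an algebraic proof of it.)

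The paper bypasses alpha invariants entirely. In both cases it exhibits an explicit finite Galois cover $X\to\bP^n$ (sending $x_i\mapsto x_i^d$, resp.\ $x_i\mapsto x_i^2$) and checks the crepant relation $K_X=\pi^*(K_{\bP^n}+c\sum H_i)$ with $c=1-\tfrac{1}{d}$, resp.\ $c=\tfrac{1}{2}$, where the $H_i$ are $n+2$ (resp.\ $n+3$) hyperplanes in general linear position. It then invokes Corollary \ref{cor:finite Galois} (a direct consequence of the equivariant theory) to reduce K-polystability of $X$ to K-polystability of the log pair $(\bP^n,c\sum H_i)$, and the latter is supplied by Fujita's result \cite{Fuj-hyperplane}. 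The point is that Fujita's input is genuinely stronger than an alpha bound: for the two-quadric quotient $(\bP^n,\tfrac{1}{2}\sum_{i=0}^{n+2}H_i)$ one has $\alpha\le\tfrac{1}{n-1}$ for the same reason as above, yet the pair is K-polystable. This is precisely why the Galois-quotient route succeeds where the equivariant Tian criterion cannot.
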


Along the proof of Theorem \ref{main:G-delta=delta}, we also discover the following result concerning optimal destabilizing centers (i.e. centers of valuations that compute the stability thresholds, see Definition \ref{defn:optimal destab center}) of K-unstable log Fano pairs, which may be of independent interest.

\begin{thm}[=Theorem \ref{thm:minimal delta<1 center}] \label{main:minimal delta<1 center}
Let $(X,\Delta)$ be a log Fano pair. Assume that $(X,\Delta)$ is not K-semistable. Then there exists a $($necessarily unique$)$ subvariety $Z\subseteq X$ such that
\begin{enumerate}
    \item $Z$ is an optimal destabilizing center of $(X,\Delta)$ and
    \item $Z$ is contained in any other optimal destabilizing center of $(X,\Delta)$.
\end{enumerate}
\end{thm}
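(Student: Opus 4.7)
My plan is to prove the theorem by establishing that the family of optimal destabilizing centers is closed under a suitable ``intersection'' operation and then invoking Noetherian induction. Set $\delta:=\delta(X,\Delta)$, so that $\delta<1$ by K-instability. By the existence of divisorial minimizers arising as lc places of $\bQ$-complements---a result established earlier in the paper and built on work of Blum, Liu--Xu, and Xu---every optimal destabilizing center has the form $c_X(E)$ for some prime divisor $E$ over $X$ with $A_{X,\Delta}(E)/S(E)=\delta$ that is an lc place of some $\bQ$-complement of $(X,\Delta)$.

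Given two optimal destabilizing centers $Z_1$ and $Z_2$, I would pick corresponding divisorial minimizers $E_1,E_2$ and a projective log smooth model $\pi\colon Y\to X$ on which $E_1\cup E_2$ is SNC. Assuming first that $E_1\cap E_2\neq\emptyset$ on $Y$, at a generic point of a component of this intersection consider the quasi-monomial valuation $v_{s,t}:=s\,\mathrm{ord}_{E_1}+t\,\mathrm{ord}_{E_2}$ for $s,t>0$. Linearity of log discrepancy gives $A_{X,\Delta}(v_{s,t})=sA_{X,\Delta}(E_1)+tA_{X,\Delta}(E_2)$, while standard concavity of the $S$-invariant along a quasi-monomial simplex---proved by choosing a basis of the section ring $R_\bullet(X,-(K_X+\Delta))$ simultaneously compatible with the filtrations induced by $\mathrm{ord}_{E_1}$ and $\mathrm{ord}_{E_2}$, in the spirit of Abban--Zhuang---yields $S(v_{s,t})\geq sS(E_1)+tS(E_2)$. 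Since $A_{X,\Delta}(E_i)/S(E_i)=\delta$ for $i=1,2$, these two bounds force $A_{X,\Delta}(v_{s,t})/S(v_{s,t})=\delta$, so $v_{s,t}$ is a minimizer whose center is contained in $\pi(E_1\cap E_2)\subseteq Z_1\cap Z_2$.

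The main obstacle is the case in which $E_1\cap E_2=\emptyset$ on every common log smooth model of $X$, where the convex-combination construction above breaks down. My strategy is to bypass this via lc complement theory: produce a single $\bQ$-complement $(X,\Delta+D)$ with respect to which both $E_1$ and $E_2$ are simultaneously lc places. Such a complement should be obtainable by combining boundedness of $N$-complements (\`a la Birkar) with a perturbation or convex combination of the individual complements witnessing each $E_i$; alternatively, one can extract it from a finitely generated adjoint algebra via MMP. On a dlt modification of $(X,\Delta+D)$, both $E_i$ (or rather their strict transforms) appear as divisorial lc places on a common log smooth model, so the preceding quasi-monomial argument applies along a chain of boundary components of the modification and produces a divisorial minimizer whose center is contained in $Z_1\cap Z_2$.

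Combining both cases, the poset of optimal destabilizing centers is downward directed under inclusion by the intersection property just established. Since closed subvarieties of $X$ form a Noetherian poset, there exists a minimal optimal destabilizing center $Z$. For any other optimal destabilizing center $Z'$, the intersection property supplies an optimal destabilizing center $Z''\subseteq Z\cap Z'$; minimality of $Z$ forces $Z''=Z$, hence $Z\subseteq Z'$. Uniqueness of $Z$ satisfying (1) and (2) is then immediate from (2), completing the proof.
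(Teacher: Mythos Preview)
Your approach has a genuine gap in Case~2, and it is precisely the place where the hypothesis $\delta(X,\Delta)<1$ must be used. You never establish that $Z_1\cap Z_2\neq\emptyset$; instead you try to bypass this by producing a single $\bQ$-complement $D\sim_\bQ -(K_X+\Delta)$ for which both $E_1$ and $E_2$ are lc places. But neither of the mechanisms you suggest actually yields this. A convex combination $\tfrac12(D_1+D_2)$ of complements witnessing $E_1,E_2$ individually need not have $E_1$ (or $E_2$) as an lc place: one would need $\ord_{E_1}(D_2)=A_{X,\Delta}(E_1)$, which is unjustified. Birkar's boundedness of $N$-complements produces a complement with a \emph{given} lc place, not two prescribed ones simultaneously. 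Even granting such a common complement, your ``chain of boundary components'' argument requires connectedness of the non-klt locus, but Koll\'ar--Shokurov connectedness does not apply to $(X,\Delta+D)$ since $-(K_X+\Delta+D)\sim_\bQ 0$ is not big; and any perturbation $(X,\Delta+(1-\epsilon)D)$ that restores ampleness destroys the lc places. Finally, your dichotomy is miscalibrated: $Z_1\cap Z_2\neq\emptyset$ does not force $E_1\cap E_2\neq\emptyset$ on any log smooth model (two exceptional divisors over the same point can be disjoint on every model carrying both), so Case~2 is in fact the main case even when the centers meet.

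The paper handles this differently. It first proves (Lemma~\ref{lem:delta<1 center intersect}) that any two optimal destabilizing centers intersect, by choosing an $m$-basis type divisor $D$ simultaneously compatible with both minimizing valuations and using $\delta_m<1$ to arrange that $-(K_X+\Delta+\delta_m D+\epsilon H)$ is \emph{ample} while $\Nklt=Z_1\cup Z_2$; connectedness then forces $Z_1\cap Z_2\neq\emptyset$. The ampleness margin here is exactly where K-instability enters. Separately (Lemma~\ref{lem:intersection of two delta center}), the paper shows that every component of $Z_1\cap Z_2$ is again an optimal destabilizing center, via the graded ideal sequence $\fa_m=\fa_m(v_1)\cap\fa_m(v_2)$ and a non-klt-center argument; this avoids any assumption that $E_1$ and $E_2$ meet on a model. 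Your Case~1 quasi-monomial computation is correct and is morally the ``nice'' special case of the paper's Lemma~\ref{lem:intersection of two delta center}, but it does not cover enough to replace that lemma, and nothing in your proposal substitutes for Lemma~\ref{lem:delta<1 center intersect}.
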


Let us briefly explain some key ideas behind the proofs of Theorems \ref{main:G-delta=delta} and \ref{main:minimal delta<1 center}. By definition, an optimal destabilizing center of a log Fano pair $(X,\Delta)$ is (roughly speaking) an lc center of $(X,\Delta+\delta D)$ for some basis type $\bQ$-divisor $D$ of the pair $(X,\Delta)$, where $\delta=\delta(X,\Delta)$ is the stability threshold. Our first observation is that in fact every pair of optimal destabilizing centers can be realized as lc centers of a common pair $(X,\Delta+\delta D)$. This allows us to show that optimal destabilizing centers exhibit properties that are similar to lc centers of a fixed lc pair. In particular, any two optimal destabilizing centers intersect (Lemma \ref{lem:delta<1 center intersect}) due to Koll\'ar-Shokurov's connectedness theorem and the intersection is a union of optimal destabilizing centers (Lemma \ref{lem:intersection of two delta center}). As a result, the minimal optimal destabilizing center is unique and contained in any other optimal destabilizing centers. If the log Fano pair is defined over $k$ and admits an action of an algebraic group $G$, then this unique center is also defined over $k$ and invariant under the $G$-action. While the valuations that compute the stability threshold may not be $G$-invariant a priori, we can at least identify a $G$-invariant one $v$ that computes the ``equivariant version'' of the stability threshold at the minimal optimal destabilizing center. When $v$ is divisorial and realized by some prime divisor $E$ over $X$, we may relate the stability threshold (and its geometric or equivariant version) on $X$ to a similar invariant on $E$ through the recent work \cite{AZ-K-adjunction}. This suggests us to use induction on the dimension. However, as the divisor $E$ is not necessarily log Fano and the valuation $v$ may not even be divisorial, we need to carefully set up the inductive framework and work in a setting that's slightly more general than those of \cite{AZ-K-adjunction}, see Theorem \ref{thm:G-delta=delta for boundary}.

This paper is organized as follows. In Section \ref{sec:prelim} we recall some definitions and preliminary results. Theorem \ref{main:minimal delta<1 center} is proved in Section \ref{sec:destab centers} where we study the behaviour of optimal destabilizing centers. In Section \ref{sec:equiv K=K}, we prove Theorem \ref{main:G-delta=delta} and deduce Theorem \ref{main:equivariant K=K}, Corollaries \ref{main:Tian's criterion} and \ref{main:fermat and two quadric} as a consequence.

\subsection*{Postscript note}

Since this paper appeared on the arXiv, there has been many progress on the algebraic study of K-stability that leads to different proofs of some results in this article. Firstly, it is shown in \cite{XZ-uniqueness} that the minimizer of the normalized volume function is unique (up to rescaling) for any klt singularity. Combined with the cone construction argument (see e.g. \cite{Li-equivariant-minimize}*{Theorem 3.1} and \cite{Zhu-equivariant-K}*{Theorem B}), this gives a different proof of our Theorem \ref{main:equivariant K=K}(1). More recently, \cite{LXZ-HRFG} proves the optimal destabilization conjecture. Combined with \cite{BHLLX-theta}, it implies that to any K-unstable Fano variety one can associate a unique destabilizing test configuration that minimizes the functional defined in \cite{BHLLX-theta}. Such a test configuration is necessarily induced by an invariant divisor. This gives another proof of Theorem \ref{main:G-delta=delta} and further shows that the infimum in Theorem \ref{main:G-delta=delta} is in fact a minimum.

\subsection*{Acknowledgement}

The author would like to thank Yuchen Liu and Chenyang Xu for many helpful discussions, and Ivan Cheltsov and Xiaowei Wang for suggestions and comments. He would also like to thank the anonymous referee for careful reading of the manuscript and for the many helpful suggestions and comments.

\section{Preliminaries} \label{sec:prelim}

\subsection{Notation and conventions}

Throughout the paper let $k$ be a field of characteristic zero and let $\kbar$ be its algebraic closure. Unless otherwise specified, all varieties, morphisms and linear series are assumed to be defined over $k$. Given an object $X$ (e.g. a variety/divisor/linear series) over $k$ and a field extension $k\subseteq K$, we denote by $X_K$ the corresponding base change. A pair $(X,\Delta)$ consists of a normal geometrically irreducible variety $X$ and an effective $\bQ$-divisor $\Delta$ such that $K_X+\Delta$ is $\bQ$-Cartier. The notions of klt and lc singularities are defined as in \cite{Kol-mmp}*{Definition 2.8}. A pair $(X,\Delta)$ is log Fano if $X$ is projective, $(X,\Delta)$ is klt and $-(K_X+\Delta)$ is ample. The non-klt center $\Nklt(X,\Delta)$ of a pair $(X,\Delta)$ is the union of points $x\in X$ such that $(X,\Delta)$ is not klt at $x$. If $\pi:Y\to X$ is a projective birational morphism and $E$ is a prime divisor on $Y$, then we say $E$ is a divisor over $X$. We denote by $C_X(E)$ the center of $E$ on $X$. Let $(X,\Delta)$ be a klt pair, $Z\subseteq X$ a subvariety and $D$ an effective divisor on $X$, we denote by $\lct_Z(X,\Delta;D)$ the largest number $\lambda\ge 0$ such that $(X,\Delta+\lambda D)$ is lc at the generic point of $Z$ (by convention, we set $\lct_Z(X,\Delta;D)=+\infty$ if $Z$ is not contained in the support of $D$). A $\bQ$-ideal on $X$ is a formal linear combination $\fa=\prod_{i=1}^m\fa_i^{\lambda_i}$ where $\fa_i\subseteq \cO_X$ are ideal sheaves on $X$ and $\lambda_i\in\bQ_+$. Its co-support $\Cosupp(\fa)$ is defined to be the union of $\Supp(\cO_X/\fa_i)$. We can similarly define the log canonical threshold $\lct_Z(X,\Delta;\fa)$ of a $\bQ$-ideal $\fa$ with respect to the pair $(X,\Delta)$. We denote by $\Val_X$ the set of $\bR$-valued valuations on the function field $k(X)^*$ that is trivial on $k^*$ and has a center on $X$. The log discrepancy $A_{X,\Delta}(v)$ of a valuation $v\in\Val_X$ with respect to a pair $(X,\Delta)$ is defined as in \cite{JM-seq-of-ideal}*{(5.2)} and we denote by $\Val^*_X$ the set of $v\in\Val_X$ with $A_{X,\Delta}(v)<\infty$ for some $\Delta$ such that $K_X+\Delta$ is $\bQ$-Cartier (it does not depend on the choice of $\Delta$). When $X$ has a group $G$ action, a valuation $v$ is said to $G$-invariant if $v(g\cdot s)=v(s)$ for any $g\in G$ and any $s\in k(X)^*$; a divisor $E$ over $X$ is $G$-invariant if the associated valuation $\ord_E$ is $G$-invariant. Given a $\bQ$-divisor $D$ on $X$, we set
\[
H^0(X,D):=\{0\neq s\in k(X)\,|\,{\rm div}(s)+D\ge 0\}\cup \{0\}
\]
whose members can be viewed as effective $\bQ$-divisors that are $\bZ$-linearly equivalent to $D$. If $D$ is $\bQ$-Cartier, then $v(s):=v({\rm div}(s)+D)$ is well-defined for any $0\neq s\in H^0(X,D)$ and any valuation $v\in\Val_X$; we denote by $\cF_v$ the induced filtration, i.e.
\[
\cF^\lambda_v H^0(X,D) := \{s\in H^0(X,D)\,|\,v(s)\ge \lambda\}.
\]

\subsection{Test configurations and K-stability}

In this section, we recall the definition of K-stability and equivariant K-stability.

\begin{defn}[\cites{Tian-K-stability-defn,Don-K-stability-defn,OS-alpha,LX-special-tc}]
Let $(X,\Delta)$ be an $n$-dimensional log Fano pair with an action of an algebraic group $G$. Let $L$ be an ample line bundle on $X$ such that $L\sim -r(K_X+\Delta)$ for some $r\in\bN^*$.
\begin{enumerate}
\item A (normal) \emph{$G$-equivariant test configuration} $(\cX,\Dtc;\cL)/\bA^1$ of $(X,\Delta;L)$ consists of the following data:
\begin{itemize}
 \item a normal variety $\cX$, an effective $\bQ$-divisor $\Dtc$ on $\cX$, together with a flat projective morphism $\pi\colon \cX\to \bA^1$;
 \item a $\pi$-ample line bundle $\cL$ on $\cX$;
 \item a $G\times \bG_m$-action on $(\cX,\Dtc;\cL)$ such that $\pi$ is $G\times\bG_m$-equivariant with respect to the trivial action of $G$ on $\bA^1$ and the standard action of $\bG_m$ on $\bA^1$ via multiplication;
 \item $(\cX\setminus\cX_0,\Dtc|_{\cX\setminus\cX_0};\cL|_{\cX\setminus\cX_0})$
 is $G\times \bG_m$-equivariantly isomorphic to $(X,\Delta;L)\times(\bA^1\setminus\{0\})$.
\end{itemize}

\item
A $G$-equivariant test configuration is called a \emph{product} test configuration if
\[
(\cX,\Dtc;\cL)\cong(X\times\bA^1,\Delta\times\bA^1;{\rm pr}_1^* L).
\]
A product test configuration is called a \emph{trivial} test configuration if the above isomorphism is $G\times\bG_m$-equivariant with respect to the given $G$-action on $X$, trivial $\bG_m$-action on $X$, trivial $G$-action on $\bA^1$, and the standard $\bG_m$-action on $\bA^1$ via multiplication.

\item
A $G$-equivariant test configuration $(\cX,\Dtc;\cL)$ is said to be \emph{special} if $(\cX,\cX_0+\Dtc)$ is plt and $\cL\sim_\bQ -r(K_{\cX}+\Dtc)$. In this case, we say that $(\cX_0,(\Dtc)_0)$ (which is necessarily log Fano) is a \emph{$G$-equivariant special degeneration} of $(X,\Delta)$.

\item (cf. \cites{Wang12, Odaka13})
Assume $\pi:(\cX,\Dtc;\cL)\to \bA^1$ is a $G$-equivariant test configuration of 
$(X,\Delta;L)$. Let $\bar{\pi}: (\ocX,\oDtc;\ocL)\to\bP^1$ be the natural $G\times\bG_m$-equivariant compactification of $\pi$. The \emph{generalized Futaki invariant} of $(\cX,\Dtc;\cL)$ is defined by the intersection formula
\[
\Fut(\cX,\Dtc;\cL):=\frac{1}{(-K_X-\Delta)^n}\left(\frac{n}{n+1}\cdot\frac{(\ocL^{n+1})}{r^{n+1}}+\frac{(\ocL^n\cdot (K_{\ocX/\bP^1}+\oDtc))}{r^n}\right).
\]
\item The log Fano pair $(X,\Delta)$ is said to be
\begin{itemize}
  \item \emph{$G$-equivariantly K-semistable} if $\Fut(\cX,\Dtc;\cL)\geq 0$ for any $G$-equivariant  test configuration $(\cX,\Dtc;\cL)/\bA^1$ and any $r\in\bN^*$ such that $L$ is Cartier. 
  \item \emph{$G$-equivariantly K-polystable} if it is $G$-equivariantly K-semistable and for any $G$-equivariant test configuration $(\cX,\Dtc;\cL)/\bA^1$ we have $\Fut(\cX,\Dtc;\cL)=0$ if and only if it is a product test configuration.
  \item \emph{geometrically K-semistable} (resp. \emph{geometrically K-polystable}) if $(\Xkb,\Deltakb)$ is equivariantly K-semistable (resp. K-polystable) over $\kbar$ with respect to the trivial group action.
\end{itemize}
\end{enumerate}
\end{defn}

Taking $G$ to be the trivial group and $k=\bC$, we recover the usual definition of the K-semistability (resp. K-polystablity) of a complex log Fano pair. We say that a log Fano pair $(X,\Delta)$ is K-unstable if it is not K-semistable. The following fact will be frequently used in this paper.

\begin{thm}[\cite{LWX-Kps-degeneration}] \label{thm:test G-Kps by stc}
Let $(X,\Delta)$ be a geometrically K-semistable log Fano pair with an action of a group $G$. Then it is $G$-equivariantly K-polystable if and only if every geometrically K-semistable $G$-equivariant special degeneration of $(X,\Delta)$ is isomorphic to $(X,\Delta)$.
\end{thm}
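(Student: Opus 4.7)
The plan is to handle both directions by reducing to special test configurations via an equivariant version of the Li-Xu MMP construction, combined with a ``reverse $\bG_m$'' trick that controls the sign of the generalized Futaki invariant.

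For the direction $(\Rightarrow)$, suppose $(X,\Delta)$ is $G$-equivariantly K-polystable and let $(\cX,\Dtc;\cL)$ be a $G$-equivariant special degeneration whose central fiber $(\cX_0,(\Dtc)_0)$ is geometrically K-semistable. The special test configuration endows $(\cX_0,(\Dtc)_0)$ with an induced $\bG_m$-action, and running this one-parameter subgroup in reverse yields a product test configuration of the central fiber whose generalized Futaki invariant equals $-\Fut(\cX,\Dtc;\cL)$. Geometric K-semistability of $(\cX_0,(\Dtc)_0)$ then gives $\Fut(\cX,\Dtc;\cL)\le 0$, while $G$-equivariant K-semistability of $(X,\Delta)$ gives the reverse inequality, so $\Fut=0$. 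The K-polystability hypothesis forces $(\cX,\Dtc;\cL)$ to be a product test configuration, and in particular $(\cX_0,(\Dtc)_0)\cong(X,\Delta)$.

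For the direction $(\Leftarrow)$, observe first that geometric K-semistability implies $G$-equivariant K-semistability by base changing any $G$-equivariant test configuration to $\kbar$, so it remains to show that every $G$-equivariant test configuration $(\cX,\Dtc;\cL)$ with $\Fut=0$ is a product. I would apply a $G$-equivariant enhancement of Li-Xu's reduction to produce a $G$-equivariant special test configuration $(\cX',\Dtc';\cL')$ satisfying $\Fut(\cX',\Dtc';\cL')\le\Fut(\cX,\Dtc;\cL)=0$, with strict decrease unless every MMP step is trivial. Combined with the lower bound $\Fut(\cX',\Dtc';\cL')\ge 0$ from K-semistability, equality holds throughout and $(\cX,\Dtc;\cL)$ coincides with the special test configuration $(\cX',\Dtc';\cL')$. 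Next, $(\cX_0,(\Dtc)_0)$ must be geometrically K-semistable: any hypothetical destabilizing test configuration of $(\cX_0,(\Dtc)_0)$ could, by a standard twisting-by-$t^N$ construction along $(\cX,\Dtc;\cL)$, be propagated to a test configuration of $(X,\Delta)$ with negative Futaki, contradicting K-semistability. Thus by hypothesis $(\cX_0,(\Dtc)_0)\cong(X,\Delta)$, and since a special test configuration whose central fiber is isomorphic to its general fiber corresponds to a one-parameter subgroup of $\Aut(X,\Delta)$, it is a product test configuration.

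The main obstacle is establishing the $G$-equivariant refinement of Li-Xu's special test configuration reduction together with the sharp monotonicity of the generalized Futaki invariant along MMP steps. The classical procedure is canonical enough that the output should carry the $G$-action automatically, but one still needs to verify equivariance at every step (base change, log canonical modification, MMP with scaling, and the final twisted degeneration to a Fano central fiber), and to check that the strict decrease of Futaki under every nontrivial MMP step persists in the equivariant setting.
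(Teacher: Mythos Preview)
Your proposal is correct, and your treatment of the $(\Rightarrow)$ direction via the reverse $\bG_m$-action is exactly right (and more explicit than the paper, which is terse on this direction). However, for the $(\Leftarrow)$ direction you take a harder route than necessary, and the ``main obstacle'' you identify is in fact a non-issue.

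The point you are missing is that the hypothesis \emph{geometrically K-semistable} lets you apply the ordinary, non-equivariant Li--Xu theorem directly. Given a $G$-equivariant test configuration $(\cX,\Dtc;\cL)$ with $\Fut=0$, simply forget the $G$-action (and, if working over $k$, base change to $\kbar$). Since $(\Xkb,\Deltakb)$ is K-semistable, \cite{LX-special-tc}*{Theorem 7} already forces the test configuration to be special. ``Special'' is a geometric condition on the total space (plt plus a linear equivalence), so it descends back to $k$, and of course the original $G$-action is still there. No equivariant MMP is required; the $G$-equivariance just comes along for the ride. Similarly, the K-semistability of the central fiber is quoted from \cite{LWX-Kps-degeneration}*{Lemma 3.1} rather than reproved via the twisting-by-$t^N$ argument (though your sketch of that argument is essentially what underlies the lemma). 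The paper's proof is therefore three sentences long: $\Fut=0$ forces special with K-semistable central fiber, the hypothesis gives $(\cX_0,(\Dtc)_0)\cong(X,\Delta)$, and an isotrivial family over $\bA^1$ is trivial.

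Your approach would work too, and verifying equivariance through the Li--Xu MMP is a reasonable exercise (the steps are indeed canonical), but it is unnecessary here precisely because geometric K-semistability is built into the statement.
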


\begin{proof}
Let $(\cX,\Dtc;\cL)$ be a $G$-equivariant test configuration such that $\Fut(\cX,\Dtc;\cL)=0$. Since $(X,\Delta)$ is geometrically K-semistable, the test configuration is special by \cite{LX-special-tc}*{Theorem 7} and the central fiber $(X_0,\Delta_0)$ is geometrically K-semistable by \cite{LWX-Kps-degeneration}*{Lemma 3.1}. The result then follows as $(\cX,\Dtc;\cL)$ is a product test configuration if and only if $(X_0,\Delta_0)\cong (X,\Delta)$ (an isotrivial family over $\bA^1$ is automatically trivial).
\end{proof}

\subsection{Valuative criterion and stability thresholds} \label{sec:prelim-beta & delta}

We recall the valuative criterion of K-semistability developed by Fujita and Li as well as the definition of stability threshold. 

\begin{defn}[\cites{FO-delta,BJ-delta}] \label{defn:invariants}
Let $(X,\Delta)$ be a log Fano pair and let $L=-(K_X+\Delta)$. Let $m>0$ be an integer such that $H^0(X,mL)\neq 0$.
\begin{enumerate}
    \item An $m$-basis type $\bQ$-divisor of $(X,\Delta)$ is a divisor of the form
    \[
    D=\frac{1}{mN_m}\sum_{i=1}^{N_m} \{s_i=0\}
    \]
    where $N_m=h^0(X,mL)$ and $s_1,\cdots,s_{N_m}$ is a basis of $H^0(X,mL)$. We define $\delta_m(X,\Delta)$ to be the largest number $\lambda\ge 0$ such that $(X,\Delta+\lambda D)$ is lc for every $m$-basis type $\bQ$-divisor $D$ of $(X,\Delta)$.
    \item Let $v\in\Val^*_X$ be a valuation. We define the following invariants:
    \begin{align*}
        T_m(v) & =\frac{\max\{v(D)\,|\,D\in |mL|\}}{m},\\
        S_m(v) & =\max\{v(D)\,|\,D\text{ is an }m\text{-basis type }\bQ\text{-divisor of }(X,\Delta)\}.
    \end{align*}
    We set $T(v)=\lim_{m\to\infty} T_m(v)$, $S(v)=\lim_{m\to\infty} S_m(v)$. If $E$ is a divisor over $X$, we define $S(E):=S(\ord_E)$, $T(E):=T(\ord_E)$, etc.
    \item The stability threshold (or $\delta$-invariant) of $(X,\Delta)$ is defined to be
    \[
    \delta(X,\Delta):=\inf_{v\in\Val^*_X} \frac{A_{X,\Delta}(v)}{S(v)}.
    \]
    By \cite{BJ-delta}*{Theorem A}, we have $\lim_{m\to \infty} \delta_m(X,\Delta)=\delta(X,\Delta)$.
\end{enumerate}
\end{defn}

\begin{defn}[\cite{Tian-criterion} and \cite{CS-lct-3fold}*{Appendix}]
Let $(X,\Delta)$ be a log Fano pair with an action of a group $G$. The $G$-alpha invariant $\alpha_G(X,\Delta)$ of $(X,\Delta)$ is defined to be the largest $\lambda\ge 0$ such that $(X,\Delta+\frac{\lambda}{m}\cM)$ is lc for any $m\in\bN^*$ and any $G$-invariant linear system $\cM\subseteq |-m(K_X+\Delta)|$. By definition, it is clear that
\[
\alpha_G(X,\Delta) \le \frac{A_{X,\Delta}(v)}{T(v)}
\]
for any $G$-invariant valuation $v\in \Val^*_X$.
\end{defn}

\begin{thm}[\cites{Fuj-valuative-criterion,Li-equivariant-minimize,FO-delta,BJ-delta}]
Let $(X,\Delta)$ be a log Fano pair. Assume that $k=\kbar$ is algebraically closed. The following are equivalent:
\begin{enumerate}
    \item $(X,\Delta)$ is K-semistable.
    \item $\beta_{X,\Delta}(E):=A_{X,\Delta}(E)-S(E)\ge 0$ for any divisor $E$ over $X$.
    \item $\delta(X,\Delta)\ge 1$.
\end{enumerate}
\end{thm}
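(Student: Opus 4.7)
The plan is to prove the cycle of implications $(1) \Rightarrow (2) \Rightarrow (3) \Rightarrow (1)$, following the Fujita--Li valuative criterion for the first equivalence and the Blum--Jonsson theory of the stability threshold for the second.

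For $(1) \Rightarrow (2)$, I would associate a test configuration to each divisor $E$ over $X$ with $A_{X,\Delta}(E)<\infty$. Consider the filtration $\cF_v$ of the section ring $R=\bigoplus_m H^0(X,-m(K_X+\Delta))$ induced by $v=\ord_E$. When $E$ is dreamy, meaning that the Rees algebra of $\cF_v$ is finitely generated, this filtration produces an honest test configuration $(\cX_E,\Dtc;\cL_E)$ via $\mathrm{Proj}$, and an intersection-theoretic computation on the natural compactification $(\ocX,\oDtc;\ocL)$ yields
\[
\Fut(\cX_E,\Dtc;\cL_E) = c\cdot\bigl(A_{X,\Delta}(E)-S(E)\bigr)
\]
for some positive constant $c$ depending only on $r$ and $(-K_X-\Delta)^n$. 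K-semistability then forces $\beta_{X,\Delta}(E)\ge 0$. For a general divisor $E$, one approximates $\cF_v$ by finitely generated sub-filtrations and passes to the limit, using continuity of the relevant invariants along the approximation.

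For $(2) \Rightarrow (3)$, I need to upgrade the inequality $A_{X,\Delta}(v)\ge S(v)$ from divisorial valuations to all $v\in\Val^*_X$. This is the content of the Blum--Jonsson theorem, which gives the equivalent formulation
\[
\delta(X,\Delta) = \inf_E \frac{A_{X,\Delta}(E)}{S(E)}
\]
with $E$ ranging over divisors over $X$. It ultimately rests on the limit $\delta_m(X,\Delta)\to\delta(X,\Delta)$ recalled in Definition \ref{defn:invariants}, combined with the fact that $\delta_m$ is realized by basis type $\bQ$-divisors and can thus be approximated by divisorial valuations obtained by log resolution of such divisors.

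For $(3) \Rightarrow (1)$, given a normal test configuration $(\cX,\Dtc;\cL)$ of $(X,\Delta;L)$, I would extract the associated $\bN$-filtration on the section ring $R$ from the weight decomposition on the central fiber. Using $\delta_m(X,\Delta)\to\delta(X,\Delta)\ge 1$, for any $\varepsilon>0$ and $m\gg 0$ the pair $(X,\Delta+(1-\varepsilon)D)$ is lc for every $m$-basis type $\bQ$-divisor $D$. Choosing such $D$ compatible with the filtration and substituting into the intersection formula defining $\Fut$ yields $\Fut(\cX,\Dtc;\cL)\ge -C\varepsilon$; letting $\varepsilon\to 0$ and $m\to\infty$ then gives $\Fut\ge 0$. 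The hardest step lies in $(1)\Leftrightarrow(2)$: the intersection-theoretic identification of $\Fut$ with a positive multiple of $\beta_{X,\Delta}(E)$ requires a careful study of the compactified total space, and the extension to non-dreamy divisors requires verifying that the Futaki invariant is continuous along the approximating filtrations. Once this and the limit $\delta_m\to\delta$ are available, the remaining implications are essentially formal.
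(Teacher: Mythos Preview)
The paper does not prove this theorem; it is stated with citations to \cites{Fuj-valuative-criterion,Li-equivariant-minimize,FO-delta,BJ-delta} and no proof is given, as it is a background result imported from the literature. So there is no ``paper's own proof'' to compare against.

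Your sketch is broadly in line with what those references do, but a couple of points deserve tightening. In $(1)\Rightarrow(2)$, the delicate case is indeed non-dreamy $E$: Fujita's original argument does not literally approximate by finitely generated sub-filtrations and check continuity of $\Fut$; rather, one either passes through the Ding invariant (as in Fujita and Li), or one first reduces to special test configurations via \cite{LX-special-tc} and then observes that divisors arising from special test configurations are automatically dreamy. Your phrasing ``continuity of the relevant invariants along the approximation'' hides a genuine issue, since the generalized Futaki invariant is not obviously continuous under arbitrary filtration approximations. For $(3)\Rightarrow(1)$, the cleanest route in the literature is again through the Ding invariant (which is bounded below by $1-1/\delta$ times a positive quantity) rather than a direct basis-type-divisor estimate of $\Fut$; your outlined computation ``substituting into the intersection formula'' is vague and would need the non-Archimedean formalism of \cite{BHJ-DH-measure} to make precise. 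None of this is fatal for a sketch, but if you were writing an actual proof you would want to either invoke the Ding-functional route explicitly or restrict $(1)\Leftrightarrow(2)$ to dreamy divisors (which suffices by \cite{LX-special-tc}) and avoid the approximation altogether.
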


We also need the following equivariant version of the above valuative criterion.

\begin{defn}[\cite{Fuj-valuative-criterion}] \label{defn:dreamy}
Let $(X,\Delta)$ be a log Fano pair and let $E$ be a divisor over $X$. Let $r>0$ be an integer such that $L:=-r(K_X+\Delta)$ is ample. We say that $E$ is dreamy if the graded algebra $\bigoplus_{m,i\in\bN} \cF_E^i H^0(X,\cO_X(mL))$ is finitely generated.
\end{defn}

\begin{rem} \label{rem:dreamy}
If there exists some effective $\bQ$-divisor $D\sim_\bQ -(K_X+\Delta)$ such that $(X,\Delta+D)$ is lc and $A_{X,\Delta+D}(E)=0$ (in this case we say that $E$ is an \emph{lc place of complement}), then $E$ is dreamy. In fact, $(X,\Delta+(1-\epsilon)D)$ is log Fano for some $0<\epsilon\ll 1$ and $0<A_{X,\Delta+(1-\epsilon)D}(E)<1$, thus by \cite{BCHM}*{Corollary 1.4.3} there exists a proper birational morphism $\pi\colon Y\to X$ with unique exceptional divisor $E$. Using the crepant pullback of $(X,\Delta+(1-\epsilon)D)$, it is not hard to see that $Y$ is of Fano type and hence the graded algebra $\bigoplus_{m,i\in\bN} \cF_E^i H^0(X,\cO_X(mL))=\bigoplus_{m,i\in\bN} H^0(Y,\cO_Y(m\pi^*L-iE))$ is finitely generated by \cite{BCHM}*{Corollary 1.3.2}.
\end{rem}

\begin{prop} \label{prop:G-valuative criterion}
Let $(X,\Delta)$ be a log Fano pair with an action of an algebraic group $G$. Assume that $(X,\Delta)$ is $G$-equivariantly K-semistable. Then $A_{X,\Delta}(E)\ge S(E)$ for any $G$-invariant geometrically irreducible dreamy divisor $E$ over $X$.
\end{prop}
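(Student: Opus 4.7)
The plan is to reduce to the valuative/Futaki formulation by constructing a $G$-equivariant test configuration out of $E$ via the Rees-type construction of Fujita and applying the definition of $G$-equivariant K-semistability.

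First, I would fix $r\in\bN^*$ so that $L:=-r(K_X+\Delta)$ is an ample Cartier divisor and form the bi-graded $k$-algebra $R:=\bigoplus_{m\in\bN}\bigoplus_{i\in\bN}\cF_E^i H^0(X,mL)\cdot t^{-i}$, which is finitely generated by the dreamy assumption. Then, following \cite{Fuj-valuative-criterion}, set $\cX:=\Proj_{\bA^1}(R)$ together with the tautological line bundle $\cL$ and the $\bQ$-divisor $\Dtc$ obtained as the closure of $\Delta\times(\bA^1\setminus\{0\})$ inside $\cX$, so that $(\cX,\Dtc;\cL)\to \bA^1$ is a (normal) test configuration of $(X,\Delta;L)$. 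Normality of $\cX$ and the fact that the general fiber is $(X,\Delta;L)$ follow as in the proof of \cite{Fuj-valuative-criterion}; the hypothesis that $E$ is geometrically irreducible is what guarantees the central fiber construction does not degenerate over $k$ in a pathological way (it ensures $\ord_E$ extends $\bar{k}$-linearly to a valuation with irreducible associated graded).

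Second, I would verify that $(\cX,\Dtc;\cL)$ is $G$-equivariant. The $\bG_m$-action comes for free from the grading by $i$. The $G$-action on $X$ induces a $G$-action on each $H^0(X,mL)$, and because $E$ is $G$-invariant the filtration $\cF_E^\bullet$ is preserved by $G$; hence $G$ acts on $R$ compatibly with the bi-grading and the $\bG_m$-action, producing the required $G\times\bG_m$-action on $(\cX,\Dtc;\cL)$ over $\bA^1$ with its trivial $G$-action and standard $\bG_m$-action.

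Third, I would invoke Fujita's intersection-theoretic computation (see \cite{Fuj-valuative-criterion}) of the generalized Futaki invariant for this test configuration: up to a positive multiplicative constant depending only on $r$ and $(-K_X-\Delta)^n$, one has
\[
\Fut(\cX,\Dtc;\cL) \;=\; c\cdot\bigl(A_{X,\Delta}(E)-S(E)\bigr).
\]
Since $(X,\Delta)$ is $G$-equivariantly K-semistable and $(\cX,\Dtc;\cL)$ is a $G$-equivariant test configuration, the left-hand side is $\ge 0$, giving $A_{X,\Delta}(E)\ge S(E)$.

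The main obstacle I expect is making sure the construction is valid over the possibly non-algebraically-closed base field $k$ and that the $G$-equivariance is genuine in that setting. Concretely, one must check that the filtered Rees algebra is still finitely generated over $k$ (not only after base change to $\bar k$), that geometric irreducibility of $E$ ensures $\cX$ is a variety (irreducible) in the sense required by the definition of a test configuration, and that the algebraic-group action on $R$ algebrizes to an action on $\Proj_{\bA^1}(R)$. The dreamy hypothesis plus geometric irreducibility handle the first two points, and standard GIT/functoriality of $\Proj$ handles the last.
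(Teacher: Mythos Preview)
Your proposal is correct and follows essentially the same approach as the paper: construct the test configuration from $E$ via Fujita's Rees-type construction, observe that $G$-invariance of $E$ makes it $G$-equivariant, and then apply Fujita's identity $\Fut(\cX,\Dtc;\cL)=A_{X,\Delta}(E)-S(E)$ together with $G$-equivariant K-semistability. The paper's proof is just a terser version of exactly this argument, citing \cite{Fuj-valuative-criterion}*{Theorem 5.2 and Section 6} directly and noting that the central fiber is geometrically integral (which is where geometric irreducibility of $E$ enters); in particular the Futaki invariant equals $A_{X,\Delta}(E)-S(E)$ on the nose rather than up to a constant.
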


\begin{proof}
By \cite{Fuj-valuative-criterion}*{Theorem 5.2 and Section 6}, the divisor $E$ induces a test configuration $(\cX,\Dtc;\cL)$ of $(X,\Delta)$ with geometrically integral central fiber such that 
\[
\Fut(\cX,\Dtc;\cL)=A_{X,\Delta}(E)-S(E).
\]
Since $E$ is $G$-invariant, the test configuration is $G$-equivariant, hence $\Fut(\cX,\Dtc;\cL)\ge 0$ as $(X,\Delta)$ is $G$-equivariantly K-semistable and the result follows.
\end{proof}

Finally we define the notion of optimal destabilizing centers.

\begin{defn} \label{defn:optimal destab center}
Let $(X,\Delta)$ be a log Fano pair. A valuation $v\in \Val^*_X$ is called a $\delta$-minimizing (resp. destabilizing) valuation of $(X,\Delta)$ if
\[
\delta(X,\Delta)=\frac{A_{X,\Delta}(v)}{S(v)} \quad\text{resp.}\quad \frac{A_{X,\Delta}(v)}{S(v)}<1.
\]
A subvariety $Z\subseteq X$ is called a $\delta$-minimizing (resp. destabilizing) center of $(X,\Delta)$ if there exists a $\delta$-minimizing (resp. destabilizing) valuation $v$ of $(X,\Delta)$ with center $Z$. When $\delta(X,\Delta)<1$, a $\delta$-minimizing valuation (resp. center) of $(X,\Delta)$ is also called an optimal destabilizing valuation (resp. center).
\end{defn}

\begin{rem}
By the proof of \cite{BLX-openness}*{Theorem 4.5} (which doesn't require the base field to be algebraically closed), optimal destabilizing valuations (resp. centers) exist on every log Fano pair $(X,\Delta)$ with $\delta(X,\Delta) < 1$. In general, $\delta$-minimizing valuations (resp. centers) are only known to exist over an uncountable base field by the generic limit argument of \cite{BJ-delta}*{Theorem E}. 
\end{rem}

\subsection{Graded sequence of ideals}

A graded sequence of ideals (see \cite{JM-seq-of-ideal} for a general discussion) on a variety is a sequence of ideals $\fa_\bullet=(\fa_m)_{m\in \bN}$ such that $\fa_m\cdot \fa_n\subseteq \fa_{m+n}$ for all $m,n\in\bN$. As a typical example, every valuation $v\in\Val_X$ gives rise to a graded sequence of ideals $\fa_\bullet (v)$ as follows: let $U\subseteq X$ be an affine open set; we set $\fa_m(v)(U):=\{f\in\cO_X(U)\,|\,v(f)\ge m\}$ if $v$ has a center in $U$ and otherwise $\fa_m(v)(U):=\cO_X(U)$. 

Given $v\in\Val_X$ and a graded sequence of ideals $\fa_\bullet$, we can evaluate $\fa_\bullet$ along $v$ by setting
\[
v(\fa_\bullet)=\inf_{m\in\bN^*} \frac{v(\fa_m)}{m}.
\]
Let $(X,\Delta)$ be a sub-pair (i.e. $K_X+\Delta$ is $\bQ$-Cartier but $\Delta$ is not necessarily effective), let $\fa_\bullet$ be a graded sequence of ideals and let $\lambda\in \bR$. The pair $(X,\Delta+\fa^\lambda_\bullet)$ is said to be klt (resp. lc) if $A_{X,\Delta}(v) > \lambda\cdot v(\fa_\bullet)$ (resp. $A_{X,\Delta}(v)\ge \lambda\cdot v(\fa_\bullet)$) for all $v\in\Val^*_X$. Note that if $(X,\Delta+\fa^{\lambda/m}_m)$ is klt (resp. lc) for some $m\in\bN^*$ then the same is true for $(X,\Delta+\fa^\lambda_\bullet)$. A subvariety $Z\subseteq X$ is called a non-klt center of $(X,\Delta+\fa^\lambda_\bullet)$ if there exists a valuation $v\in \Val^*_X$ with center $Z$ such that $A_{X,\Delta}(v)\le \lambda\cdot v(\fa_\bullet)$ (equivalently, $A_{X,\Delta}(v)\le \lambda\cdot \frac{v(\fa_m)}{m}$ for all $m\in\bN^*$).

\begin{lem} \label{lem:non-klt center seq of ideals}
Let $(X,\Delta)$ be a sub-pair, let $\fa_\bullet$ be a graded sequence of ideals on $X$ and let $\lambda\in \bR$. Assume that $\bigcap_{m=1}^\infty \Nklt(X,\Delta+\fa^{\lambda/m}_m)\neq \emptyset$ and let $S$ be an irreducible component of the intersection. Then there exists a valuation $v\in\Val^*_X$ with center $S$ such that 
\[
\lambda\cdot v(\fa_\bullet)\ge A_{X,\Delta}(v).
\]
In other words, $S$ is a non-klt center of $(X,\Delta+\fa^\lambda_\bullet)$.
\end{lem}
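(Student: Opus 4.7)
I build the desired valuation $v$ as a subsequential limit of divisorial valuations on log resolutions, one for each $m$. Throughout, abbreviate $\Nklt_m:=\Nklt(X,\Delta+\fa_m^{\lambda/m})$.

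First, for each $m\ge 1$ take a log resolution $\pi_m\colon Y_m\to X$ of $(X,\Delta+\fa_m)$. The locus $\Nklt_m$ is the $\pi_m$-image of those prime divisors $E$ on $Y_m$ satisfying $A_{X,\Delta}(E)\le (\lambda/m)\ord_E(\fa_m)$. Since $S\subseteq \Nklt_m$, there is such a divisor $E_m$ with $\pi_m(E_m)\supseteq S$; set $v_m:=\ord_{E_m}\in\Val_X^*$, so $C_X(v_m)\supseteq S$ and $A_{X,\Delta}(v_m)\le (\lambda/m)v_m(\fa_m)$. After shrinking $X$ to an affine neighborhood of the generic point of $S$, I may also assume $\bigcap_m \Nklt_m=S$ set-theoretically.

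If $A_{X,\Delta}(v_m)\le 0$ for some $m$, then $v_m$ already furnishes the desired valuation (after possibly passing to a specialization with center $S$). Otherwise, normalize $\tilde v_m:=v_m/A_{X,\Delta}(v_m)$, so $A_{X,\Delta}(\tilde v_m)=1$ and $\tilde v_m(\fa_m)\ge m/\lambda$. The graded-sequence inclusion $\fa_n^k\subseteq \fa_{nk}$ forces $\tilde v_m(\fa_n)\ge n/\lambda$ whenever $n\mid m$. By retracting each $\tilde v_m$ to a quasi-monomial valuation on a common log resolution $Y\to X$ of $(X,\Delta)$ and invoking compactness of the resulting finite-dimensional simplex (cf.\ \cite{JM-seq-of-ideal}), I extract a convergent subsequence $\tilde v_{m_j}\to w$ with the indices $m_j$ chosen cofinal in the divisibility order. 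Lower semi-continuity of $A_{X,\Delta}$ and continuity of $v\mapsto v(\fa_n)$ (for the finitely generated ideal $\fa_n$) on the quasi-monomial simplex then yield $A_{X,\Delta}(w)\le 1$ and $w(\fa_n)\ge n/\lambda$ for every $n$, whence $w(\fa_\bullet)=\inf_n w(\fa_n)/n\ge 1/\lambda$ and $A_{X,\Delta}(w)\le \lambda\cdot w(\fa_\bullet)$.

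It remains to verify $C_X(w)=S$. For $C_X(w)\subseteq S$, the inequality just proved gives $C_X(w)\subseteq \Nklt(X,\Delta+\fa_\bullet^\lambda)\subseteq \bigcap_m \Nklt_m=S$. For $C_X(w)\supseteq S$, each $\tilde v_{m_j}$ satisfies $C_X(\tilde v_{m_j})\supseteq S$, so any local equation of $S$ at $\eta_S$ has strictly positive value under each $\tilde v_{m_j}$, and the quasi-monomial retraction ensures this positivity survives in the limit. The main obstacle is precisely this last point: a na\"ive pointwise limit of valuations whose centers contain $S$ need not retain that property, so one must invoke the quasi-monomial retraction machinery of \cite{JM-seq-of-ideal} to control the limit behavior and confirm that the center of $w$ is exactly $S$ rather than a proper subvariety.
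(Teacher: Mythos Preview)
Your approach attempts a direct generic-limit construction, but the retraction step does not work as written. For a fixed log resolution $Y\to X$, the retraction $r_Y$ to quasi-monomial valuations on $Y$ satisfies $r_Y(v)(\fa)\le v(\fa)$ for every ideal $\fa$, with equality only when $\fa\cdot\cO_Y$ is already locally monomial on $Y$; thus the key inequality $\tilde v_m(\fa_n)\ge n/\lambda$ is in general lost after retraction, and you cannot conclude $w(\fa_n)\ge n/\lambda$ for the limit $w$ of the retracted valuations. Retraction also only gives $A_{X,\Delta}(r_Y(\tilde v_m))\le 1$, not $=1$, so the retracted valuations need not lie in a compact slice of the quasi-monomial cone. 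These are precisely the difficulties that force \cite{JM-seq-of-ideal} to use a \emph{tower} of log resolutions and a diagonal (generic-limit) argument rather than a single $Y$. Finally, you yourself flag that $C_X(w)\supseteq S$ is unproved; positivity of a local equation of $S$ under each $\tilde v_{m_j}$ does not by itself survive in the limit, since centers are only semicontinuous in the weak topology on $\Val_X$.

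The paper sidesteps all of this by not rebuilding the generic limit. After passing to a log resolution and crepant pullback one may assume $X$ is smooth with $\Delta$ of SNC support; localizing at the generic point of $S$ and replacing $\fa_\bullet$ by a subsequence $(\fa_{m\ell})_m$, one reduces to the case where $S$ is a closed point and each $(X,\Delta+\fa_m^{\lambda/m})$ is klt away from $S$. Writing $D=-\lfloor\Delta\rfloor\ge 0$, the hypothesis translates into the multiplier-ideal non-containment $\cO_X(-D)\not\subseteq\cJ(\fa_\bullet^\lambda)$, i.e.\ $\lct^{\cO_X(-D)}(\fa_\bullet)\le\lambda$, and then \cite{JM-seq-of-ideal}*{Theorem 7.3} is invoked as a black box to produce $v\in\Val_X^*$ with $\lambda\cdot v(\fa_\bullet)\ge A_X(v)+v(D)=A_{X,\Delta}(v)$. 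The center of $v$ is automatically $S$ because the pair is klt elsewhere. In short: if you want to avoid citing the packaged theorem, you must carry out the full tower-of-resolutions argument of \cite{JM-seq-of-ideal}, not a single retraction.
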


\begin{proof}
This essentially follows from \cite{JM-seq-of-ideal}. The hypothesis and statement are unaffected by taking log resolution and crepant pullbacks etc. so we may assume that $X$ is smooth and $\Delta$ has geometric simple normal crossing support. Localizing at the generic point of $S$ and replacing $\fa_\bullet$ by $(\fa_{m\ell})_{m\in\bN}$ for some $\ell\gg 0$, we may also assume that $S$ is a point and $(X,\Delta+\fa^{\lambda/m}_m)$ is klt away from $S$. In particular, $D=-\lfloor \Delta \rfloor\ge 0$. For simplicity, we further assume that $\{\Delta\}=0$ (since the result and argument of \cite{JM-seq-of-ideal} easily extends to the log smooth case). Recall that the multiplier ideal (see e.g. \cite{Laz-positivity-2}) $\cJ(\fa^{\lambda/m}_m)$ consists of those $f\in\cO_X$ such that
\[
\ord_E(f)>\frac{\lambda}{m}\ord_E(\fa_m)-A_X(E)
\]
for all divisors $E$ over $X$. Let $\cJ(\fa_\bullet)=\cup_{m\in\bN^*} \cJ(\fa^{\lambda/m}_m)$ which equals $\cJ(\fa^{\lambda/m}_m)$ for sufficiently large $m$. By assumption, we also have $A_X(E)+\ord_E(D)\le \frac{\lambda}{m}\ord_E(\fa_m)$ for some divisor $E$ as $(X,-D+\fa^{\lambda/m}_m)$ is not klt. Thus $\cO_X(-D)\not\subseteq \cJ(\fa^{\lambda/m}_m)$ for all $m\in\bN^*$ and therefore 
\[
\cO_X(-D)\not\subseteq \cJ(\fa_\bullet).
\]
In the notation of \cite{JM-seq-of-ideal}*{Section 1.4} this means $\lct^{\cO_X(-D)}(\fa_\bullet)\le \lambda$ and by \cite{JM-seq-of-ideal}*{Theorem 7.3}, there exists a valuation $v\in\Val^*_X$ such that 
\[
\lambda\cdot v(\fa_\bullet)\ge A_X(v)+v(D)=A_{X,\Delta}(v).
\]
Since $(X,\Delta+\fa^\lambda_\bullet)$ is klt outside $S$, the valuation $v$ is necessarily centered at $S$. This completes the proof.
\end{proof}

\section{Optimal destabilizing centers} \label{sec:destab centers}

In this section, we prove the existence and uniqueness of minimal optimal destabilizing centers (Definition \ref{defn:optimal destab center}) of a K-unstable log Fano pair.

\begin{thm} \label{thm:minimal delta<1 center}
Let $(X,\Delta)$ be a log Fano pair with $\delta(X,\Delta)<1$. Then there exists a $($necessarily unique$)$ subvariety $Z\subseteq X$ such that
\begin{enumerate}
    \item $Z$ is an optimal destabilizing center of $(X,\Delta)$ and
    \item $Z$ is contained in any other optimal destabilizing center of $(X,\Delta)$.
\end{enumerate}
\end{thm}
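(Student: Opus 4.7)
My plan is to reduce Theorem \ref{thm:minimal delta<1 center} to two structural statements about optimal destabilizing centers that are (or will be) established earlier in this section and advertised in the introduction: that any two optimal destabilizing centers intersect (Lemma \ref{lem:delta<1 center intersect}), and that the intersection of two optimal destabilizing centers is a union of optimal destabilizing centers (Lemma \ref{lem:intersection of two delta center}). By the remark following Definition \ref{defn:optimal destab center}, optimal destabilizing centers exist on any K-unstable log Fano pair, so the collection of such centers, ordered by inclusion, is a nonempty subposet of the Noetherian space $X$ and therefore contains at least one minimal element $Z$.

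The second step is to upgrade this minimality to the desired containment property. Given any other optimal destabilizing center $Z'$, Lemma \ref{lem:delta<1 center intersect} ensures $Z\cap Z'\neq\emptyset$, and Lemma \ref{lem:intersection of two delta center} then exhibits some irreducible component $W$ of $Z\cap Z'$ as itself an optimal destabilizing center. Since $W\subseteq Z$, the minimality of $Z$ forces $W=Z$, whence $Z\subseteq Z'$. This proves (2), and uniqueness in (1) is now automatic: a second minimal center $Z_0$ would contain $Z$ by the same argument and be contained in $Z$ by its own minimality.

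The real work is therefore packaged into the two supporting lemmas, and I expect Lemma \ref{lem:delta<1 center intersect} to be the main obstacle. To prove it I would try, for each pair of optimal destabilizing centers $Z_1,Z_2$ with associated $\delta$-minimizing valuations $v_1,v_2$, to realize $Z_1$ and $Z_2$ simultaneously as non-klt centers of a common sub-pair of the form $(X,\Delta+\delta\cdot\fa_\bullet^\lambda)$, where $\fa_\bullet$ is built out of basis type $\bQ$-divisors (or from $\fa_\bullet(v_1),\fa_\bullet(v_2)$). Because the residual divisor class $-(K_X+\Delta)-\delta\cdot(-(K_X+\Delta))=(1-\delta)(-(K_X+\Delta))$ is ample for $\delta<1$, Koll\'ar--Shokurov connectedness would then force the non-klt locus of this pair (or of the approximating $(X,\Delta+\delta D_m)$ for $m\gg 0$) to be connected, yielding $Z_1\cap Z_2\neq\emptyset$. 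Lemma \ref{lem:intersection of two delta center} would then follow by applying Lemma \ref{lem:non-klt center seq of ideals} to each irreducible component of $Z_1\cap Z_2$ to extract a valuation centered on that component which witnesses it as an optimal destabilizing center, converting the non-klt condition back into the inequality $A_{X,\Delta}(v)/S(v)\le\delta$.
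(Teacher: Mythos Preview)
Your reduction of Theorem \ref{thm:minimal delta<1 center} to Lemmas \ref{lem:delta<1 center intersect} and \ref{lem:intersection of two delta center} is correct and is exactly the argument the paper gives: choose a minimal optimal destabilizing center $Z$, and for any other such center $Z'$ use the two lemmas to force $Z\subseteq Z'$. Your sketches of the supporting lemmas are also in the right spirit, though in the paper the key technical device (beyond connectedness and Lemma \ref{lem:non-klt center seq of ideals}) is the existence of basis type $\bQ$-divisors simultaneously compatible with two valuations \cite{AZ-K-adjunction}*{Lemma 3.1}, which is what lets one realize $Z_1,Z_2$ as lc centers of a \emph{common} pair and then show that the extracted valuation on $Z_1\cap Z_2$ is again $\delta$-minimizing.
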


Note that the result is false for $\delta$-minimizing centers in general if we allow $(X,\Delta)$ to be K-semistable: for example, every closed point on $\bP^n$ is a $\delta$-minimizing center.

As we will see in the proof, optimal destabilizing centers behave like non-klt centers of a graded sequence of ideal. For this reason we need the following property of such non-klt centers. 

\begin{lem} \label{lem:intersection of two non-klt center}
Let $(X,\Delta)$ be a pair, let $(\fa_i)_{i\in\bN^*}$ be a graded sequence of ideals and let $\lambda\in\bR_{\ge 0}$. Let $Z_1,Z_2\subseteq X$ be non-klt centers of $(X,\Delta+\fa^\lambda_\bullet)$. Assume that $Z_1\cap Z_2\neq \emptyset$ and $(X,\Delta+\fa^\lambda_\bullet)$ is klt outside $Z_1\cup Z_2$. Then $Z_1\cap Z_2$ is a union of non-klt centers of $(X,\Delta+\fa^\lambda_\bullet)$.
\end{lem}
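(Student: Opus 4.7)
Let $S$ be an irreducible component of $Z_1\cap Z_2$. The goal is to exhibit a valuation $v\in\Val^*_X$ centered at $S$ with $A_{X,\Delta}(v)\le \lambda\cdot v(\fa_\bullet)$; by definition, this makes $S$ a non-klt center of $(X,\Delta+\fa_\bullet^\lambda)$. The plan is to combine the classical intersection result for non-klt centers of a single log sub-pair (a consequence of Koll\'ar--Shokurov connectedness, see e.g.~\cite{Kol-mmp}) with the graded-sequence machinery encapsulated in Lemma~\ref{lem:non-klt center seq of ideals}.

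First, any valuation $v_i$ realizing $Z_i$ as a non-klt center of $(X,\Delta+\fa_\bullet^\lambda)$ satisfies $A_{X,\Delta}(v_i)\le \lambda v_i(\fa_\bullet)\le \lambda v_i(\fa_m)/m$ for every $m\in\bN^*$, hence each $Z_i$ is also a non-klt center of the truncated sub-pair $(X,\Delta+\fa_m^{\lambda/m})$. In particular $Z_1\cup Z_2\subseteq \bigcap_m \Nklt(X,\Delta+\fa_m^{\lambda/m})$. Conversely, by Lemma~\ref{lem:non-klt center seq of ideals}, every irreducible component of the right-hand side is a non-klt center of the graded-sequence pair $(X,\Delta+\fa_\bullet^\lambda)$, so it lies in $Z_1\cup Z_2$ by the hypothesis that this pair is klt outside $Z_1\cup Z_2$. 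Combining the two inclusions gives the equality $\bigcap_m \Nklt(X,\Delta+\fa_m^{\lambda/m})=Z_1\cup Z_2$. For each $m$, applying the standard fact that intersections of non-klt centers of a log sub-pair are unions of non-klt centers to $(X,\Delta+\fa_m^{\lambda/m})$ then shows that $S$ itself is a non-klt center of this truncated sub-pair, realized by some $w_m\in\Val^*_X$ centered at $S$ with $A_{X,\Delta}(w_m)\le \lambda w_m(\fa_m)/m$.

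The main obstacle is to upgrade the $m$-dependent family $\{w_m\}$ to a single valuation $v$ that simultaneously witnesses $A_{X,\Delta}(v)\le \lambda v(\fa_\bullet)$: since the irreducible components of $\bigcap_m \Nklt(X,\Delta+\fa_m^{\lambda/m})$ are $Z_1$ and $Z_2$ (not $S$), Lemma~\ref{lem:non-klt center seq of ideals} does not apply to $\fa_\bullet$ directly. I plan to circumvent this by re-running the argument of Lemma~\ref{lem:non-klt center seq of ideals} in a localized, tie-broken setting in which $S$ \emph{does} become an irreducible component of the relevant non-klt intersection. Concretely, localize at the generic point of $S$---where $Z_1\cap Z_2=S$ set-theoretically---and perturb $\fa_\bullet$ by an auxiliary graded sequence of ideals $\fb_\bullet^\epsilon$ supported along the portion of $Z_1\cup Z_2$ not containing $S$, chosen so that the non-klt loci of the truncations of the perturbed sequence collapse to $S$ near $\eta_S$ while the inequality at $S$ is preserved. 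Applying Lemma~\ref{lem:non-klt center seq of ideals} to the perturbed graded sequence and passing to the limit $\epsilon\to 0$ (using the finiteness of the multiplier-ideal stabilization of \cite{JM-seq-of-ideal}) then produces the desired valuation $v$ for the original sequence $\fa_\bullet$.
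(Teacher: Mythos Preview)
Your first two paragraphs are essentially correct: each $Z_i$ is indeed a non-klt center of every truncation $(X,\Delta+\fa_m^{\lambda/m})$, and the standard intersection property of non-klt centers (a consequence of connectedness) then gives, for each $m$, a valuation $w_m$ centered at $S$ with $A_{X,\Delta}(w_m)\le \lambda\, w_m(\fa_m)/m$. The real problem is exactly the one you identify: upgrading this $m$-dependent family to a single $v$ with $A_{X,\Delta}(v)\le \lambda\, v(\fa_\bullet)$, i.e.\ one valuation satisfying the inequality for \emph{all} $m$ simultaneously.

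Unfortunately your tie-breaking plan does not close this gap. After localizing at $\eta_S$, the germs of $Z_1$ and $Z_2$ both pass through the closed point $S$, so there is no closed ``portion of $Z_1\cup Z_2$ not containing $S$'' on which an ideal $\fb_\bullet^\epsilon$ could be supported. More fundamentally, multiplying $\fa_\bullet$ by any additional $\bQ$-ideal can only \emph{enlarge} the non-klt loci of the truncations; it cannot make them collapse to $S$. Thus you cannot force $S$ to become an irreducible component of $\bigcap_m\Nklt$ by such a perturbation, and Lemma~\ref{lem:non-klt center seq of ideals} still does not apply. The subsequent ``pass to the limit $\epsilon\to 0$'' step is also undefined: there is no mechanism here that produces a limiting valuation from a one-parameter family, let alone one with the correct center.

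The paper's proof avoids this limiting problem entirely by transporting everything to a fixed log resolution $\pi\colon Y\to X$ chosen so that the exceptional divisors with center in $Z_1\setminus Z$ (call their union $W_1$) are disjoint from those with center in $Z_2\setminus Z$ (call them $W_2$). The non-klt loci $V_m=\Nklt(Y,\Delta_Y+\pi^*\fa_m^{\lambda/m})$ satisfy $V_{m\ell}\subseteq V_m$, so along the subsequence $m=2^j$ they stabilize to a fixed closed set $V$. By Koll\'ar--Shokurov connectedness each $V_m$ is connected over $\eta_Z$; since $V$ meets both $W_1$ and $W_2$ but $W_1\cap W_2=\emptyset$, connectedness forces $V$ to meet the divisors $W$ lying over $Z$. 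Any component of $V\cap W$ dominating $Z$ is then an irreducible component of $\bigcap_m\Nklt(Y,\Delta_Y+\pi^*\fa_m^{\lambda/m})$, and Lemma~\ref{lem:non-klt center seq of ideals} applied on $Y$ produces the desired single valuation. The point is that passing to a resolution where $W_1$ and $W_2$ are geometrically separated is what replaces your tie-breaking; it is a one-shot construction rather than a limit.
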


\begin{proof}
We may assume that $Z_1\not\subseteq Z_2$ and $Z_2\not\subseteq Z_1$, otherwise there is nothing to prove. Passing to an open neighbourhood of a generic point of $Z_1\cap Z_2$, we may assume that $X$ is affine and $Z:=Z_1\cap Z_2$ is irreducible. Let $\pi\colon Y\to X$ be a log resolution of $(X,\Delta)$, let $W$ (resp. $W_i$, $i=1,2$) be the union of $\pi$-exceptional divisors whose centers are contained in $Z_1\cap Z_2$ (resp. contained in $Z_i$ but not in $Z_1\cap Z_2$). Passing to a higher resolution, we may assume that $W_1$ is disjoint from $W_2$ (this can be achieved by blowing up strata in $W_1\cap W_2$ until they don't intersect). Let $(Y,\Delta_Y)$ be the crepant pullback of $(X,\Delta)$, i.e. $K_Y+\Delta_Y=\pi^*(K_X+\Delta)$. Note that $\Delta_Y$ is not necessarily effective. For every integer $m>0$, we also let $V_m=\Nklt(Y,\Delta_Y+\pi^*\fa^{\lambda/m}_m)$. Since $\fa_\bullet$ is a graded sequence of ideals, we have $V_{m\ell}\subseteq V_m$ for all $m,\ell\in\bN^*$. In particular $V_1\supseteq V_2\supseteq \cdots \supseteq V_{2^m}\supseteq \cdots$ and thus we have $V_{2^m}=V_{2^{m+1}}=\cdots=V$ for all sufficiently large $m$. By Koll\'ar-Shokurov connectedness theorem \cite{K+-flip}*{Theorem 17.4} (applied to the pair $(Y,\Delta_Y+\frac{\lambda}{m} \pi^*D)$ over $X$, where $D=\{f=0\}$ and $f\in\fa_m$ is a general member), we know that $V_m$ is connected over the generic point of $Z$ and thus $V$ is connected over the generic point of $Z$ as well. By assumption and \cite{JM-seq-of-ideal}*{Theorem A}, $(X,\Delta+\fa^{\lambda/m}_m)$ is klt outside $Z$ for sufficiently large $m$ but fail to be klt along either $Z_i$ for any $m\in\bN^*$. It follows that $V\subseteq W\cup W_1\cup W_2$ and $V\cap W_i\neq \emptyset$ for both $i=1,2$. Since $W_1$ is disjoint from $W_2$, we deduce that $V\cap W\neq \emptyset$ (over the generic point of $Z$) and hence any irreducible component $S$ of $V\cap W$ that dominates $Z$ is a non-klt center of $(Y,\Delta_Y+f^*\fa^\lambda_\bullet)$ by Lemma \ref{lem:non-klt center seq of ideals}. Since $\pi(S)=Z$ by construction and $(Y,\Delta_Y+f^*\fa^\lambda_\bullet)$ is the crepant pullback of $(X,\Delta+\fa^\lambda_\bullet)$, we conclude that $Z$ is a non-klt center of $(X,\Delta+\fa^\lambda_\bullet)$.
\end{proof}

Our proof of Theorem \ref{thm:minimal delta<1 center} now divides into two steps. We first show that the intersection of two optimal destabilizing centers is a union of optimal destabilizing centers (see \cite{Kol-mmp} for similar properties of lc centers).

\begin{lem} \label{lem:intersection of two delta center}
Let $(X,\Delta)$ be a log Fano pair and let $Z_1,Z_2$ be $\delta$-minimizing $($resp. destabili-zing$)$ centers of $(X,\Delta)$. Then $Z_1\cap Z_2$ is a union of $\delta$-minimizing $($resp. destabilizing$)$ centers.
\end{lem}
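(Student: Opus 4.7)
The plan is to exhibit both $Z_1$ and $Z_2$ as non-klt centers of a single pair $(X,\Delta+\fa_\bullet^\delta)$, where $\delta:=\delta(X,\Delta)$ and $\fa_\bullet$ is an appropriately chosen graded sequence of ideals, and then invoke Lemma \ref{lem:intersection of two non-klt center}. The destabilizing case is handled identically with $\delta$ replaced by any $\lambda<1$ exceeding $\max_i A_{X,\Delta}(v_i)/S(v_i)$.

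First I would pick $\delta$-minimizing (resp. destabilizing) valuations $v_1,v_2\in\Val^*_X$ centered at $Z_1,Z_2$, so that $A_{X,\Delta}(v_i)=\delta\cdot S(v_i)$ (resp. $A_{X,\Delta}(v_i)<\lambda\cdot S(v_i)$). Setting $L=-(K_X+\Delta)$, the two valuations induce filtrations $\cF_{v_1}$ and $\cF_{v_2}$ on each finite-dimensional vector space $H^0(X,mL)$, and for every $m\gg 0$ I would choose a basis simultaneously compatible with both filtrations (which is possible since any two filtrations of a finite-dimensional vector space admit a common refinement, and a basis adapted to the common refinement is automatically compatible with each). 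The resulting $m$-basis type $\bQ$-divisor $D_m$ then satisfies
\[
v_i(D_m)=S_m(v_i)\quad (i=1,2),\qquad w(D_m)\le S_m(w)\ \text{for every }w\in\Val^*_X.
\]
The next step is to assemble the $D_m$'s into a genuine graded sequence of ideals $\fa_\bullet$ on $X$ satisfying the asymptotic identities $v_i(\fa_\bullet)=S(v_i)$ together with the upper bound $w(\fa_\bullet)\le S(w)$ for every $w$. This would combine a Rees-type construction with the convergence $S_m\to S$ from \cite{BJ-delta} to ensure sub-multiplicativity $\fa_m\cdot\fa_\ell\subseteq\fa_{m+\ell}$.

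Given such $\fa_\bullet$, the identity $A_{X,\Delta}(v_i)=\delta\cdot S(v_i)=\delta\cdot v_i(\fa_\bullet)$ shows that $Z_i$ is a non-klt center of $(X,\Delta+\fa_\bullet^\delta)$ for $i=1,2$. Fix an irreducible component $W$ of $Z_1\cap Z_2$; after shrinking $X$ to a neighbourhood of the generic point of $W$ I may arrange that the only non-klt components of $(X,\Delta+\fa_\bullet^\delta)$ meeting $W$ are contained in $Z_1\cup Z_2$. Applying Lemma \ref{lem:intersection of two non-klt center} yields a valuation $w\in\Val^*_X$ centered at $W$ with
\[
A_{X,\Delta}(w)\le\delta\cdot w(\fa_\bullet)\le\delta\cdot S(w),
\]
and since $\delta$ is an infimum, equality $A_{X,\Delta}(w)/S(w)=\delta$ must hold. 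Hence $w$ is $\delta$-minimizing and $W$ is a $\delta$-minimizing center. Letting $W$ range over all irreducible components of $Z_1\cap Z_2$ proves the claim. The destabilizing statement follows by replacing $\delta$ with $\lambda$ throughout.

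The main technical obstacle will be the rigorous construction of $\fa_\bullet$ realizing the identities $v_i(\fa_\bullet)=S(v_i)$ simultaneously with sub-multiplicativity and the global upper bound $w(\fa_\bullet)\le S(w)$; the per-$m$ choice of compatible basis is straightforward, but gluing across different $m$ into a bona fide graded sequence requires care. A secondary concern is the localization step: if additional non-klt components of $(X,\Delta+\fa_\bullet^\delta)$ persist through the generic point of $W$ after shrinking, one may need to apply Lemma \ref{lem:intersection of two non-klt center} iteratively or perturb $\fa_\bullet$ by an auxiliary ideal to suppress them.
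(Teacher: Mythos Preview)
Your overall strategy---realize $Z_1,Z_2$ as non-klt centers of a common pair $(X,\Delta+\fa_\bullet^\delta)$ and invoke Lemma~\ref{lem:intersection of two non-klt center}---is exactly the paper's. The gap is in how you build $\fa_\bullet$. Assembling the compatible $m$-basis type divisors $D_m$ into a graded sequence with $v_i(\fa_\bullet)=S(v_i)$ and $w(\fa_\bullet)\le S(w)$ globally is not a ``technical obstacle'' you can expect to overcome: the divisors $D_m$ for different $m$ are unrelated, there is no Rees construction that makes them sub-multiplicative, and even if one existed the resulting pair $(X,\Delta+\fa_\bullet^\delta)$ would have \emph{every} $\delta$-minimizing center as a non-klt center, so your localization step would not isolate $Z_1\cup Z_2$.

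The paper reverses the order of the two ingredients. It takes $\fa_m:=\fa_m(v_1)\cap\fa_m(v_2)$, the intersection of the valuation ideals (after normalizing $A_{X,\Delta}(v_i)=1$). This is trivially a graded sequence, its co-support is exactly $Z_1\cup Z_2$, and $v_i(\fa_\bullet)\ge 1=A_{X,\Delta}(v_i)$, so Lemma~\ref{lem:intersection of two non-klt center} applies with no localization needed. It yields $v$ centered at $W$ with $v(\fa_\bullet)\ge A_{X,\Delta}(v)$; rescaling to $A_{X,\Delta}(v)=1$ this unpacks to $v(f)\ge\min\{v_1(f),v_2(f)\}$ for all $f$. \emph{Only then} does one bring in the compatible $m$-basis type divisor $D$: since $v_i(D)=S_m(v_i)$, the pointwise inequality gives $S_m(v)\ge v(D)\ge\min\{S_m(v_1),S_m(v_2)\}$, and letting $m\to\infty$ shows $v$ is $\delta$-minimizing. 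So the compatible bases are used to transfer the $S$-invariant \emph{after} the new valuation is produced, not to construct $\fa_\bullet$ beforehand.
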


Before giving the proof, let us recall a definition from \cite{AZ-K-adjunction}.

\begin{defn}[c.f. \cite{AZ-K-adjunction}*{Definition 1.5}]
Let $(X,\Delta)$ be a log Fano pair and let $v\in\Val^*_X$. Let $m\in\bN$ and let $D$ be an $m$-basis type $\bQ$-divisor of $L=-(K_X+\Delta)$, i.e. there exists a basis $s_1,\cdots,s_{N_m}$ of $H^0(X,mL)$, where $N_m=h^0(X,mL)$, such that 
\[
D=\frac{1}{mN_m}\sum_{i=1}^{N_m} \{s_i=0\}.
\]
We say that $D$ is \emph{compatible with} $v$ if $\cF_v^\lambda H^0(X,mL)$ is spanned by some $s_i$ for every $\lambda\in\bR$. It is not hard to see from the definition that $S_m(v)=v(D)$ for any $m$-basis type $\bQ$-divisor $D$ that's compatible with $v$.
\end{defn}

\begin{proof}[Proof of Lemma \ref{lem:intersection of two delta center}]
We only prove the lemma for $\delta$-minimizing centers since the argument is similar for destabilizing centers. By assumption, there exist valuations $v_1,v_2\in\Val^*_X$ with centers $Z_1,Z_2$ such that 
\begin{equation} \label{eq:v_i are delta minimizer}
    \delta(X,\Delta)=\frac{A_{X,\Delta}(v_1)}{S(v_1)}=\frac{A_{X,\Delta}(v_2)}{S(v_2)}.
\end{equation}
Up to rescaling, we may assume that $A_{X,\Delta}(v_1)=A_{X,\Delta}(v_2)=1$. For each integer $m>0$, let
\[
\fa_m:=\fa_m(v_1)\cap \fa_m(v_2)\subseteq \cO_X
\]
where $\fa_m(v_i)$ are the valuation ideals of $v_i$. Then $\fa_\bullet$ is a graded sequence of ideals with co-support $Z_1\cup Z_2$; in particular, $(X,\Delta+\fa^\lambda_m)$ is klt outside $Z_1\cup Z_2$ for any $\lambda>0$. Since $v_i(\fa_\bullet)\ge 1=A_{X,\Delta}(v_i)$ ($i=1,2$), we see that both $Z_i$ are non-klt centers of $(X,\Delta+\fa_\bullet)$, thus by Lemma \ref{lem:intersection of two non-klt center}, for any irreducible component $Z$ of $Z_1\cap Z_2$, there exists a valuation $v\in\Val^*_X$ with center $Z$ such that $v(\fa_\bullet)\ge A_{X,\Delta}(v)$. As before we may assume that $A_{X,\Delta}(v)=1$. Then for any $f\in \cO_{X,Z}$ we have $v(f)\ge \lambda$ if $v_i(f)\ge \lambda$ for both $i=1,2$; in other words, $v(f)\ge \min\{v_1(f),v_2(f)\}$. We claim that $v$ is a $\delta$-minimizing valuation of $(X,\Delta)$. To see this, let $m$ be a sufficiently divisible integer and let $D$ be an $m$-basis type $\bQ$-divisor of $(X,\Delta)$ that's compatible with both $v_i$ (which exists by \cite{AZ-K-adjunction}*{Lemma 3.1}). Then we have $v_i(D)=S_m(v_i)$ ($i=1,2$) and thus $S_m(v)\ge v(D)\ge \min\{S_m(v_1),S_m(v_2)\}$. Letting $m\to \infty$ we obtain $S(v)\ge S(v_1)=S(v_2)=\frac{1}{\delta(X,\Delta)}$ where the equalities follow from \eqref{eq:v_i are delta minimizer}. It follows that $\delta(X,\Delta)\ge \frac{1}{S(v)} = \frac{A_{X,\Delta}(v)}{S(v)}$ but we always have $\delta(X,\Delta)\le \frac{A_{X,\Delta}(v)}{S(v)}$ by definition; therefore $v$ is a $\delta$-minimizing valuation as desired.
\end{proof}

The next step is to show that optimal destabilizing centers intersect with each other using Koll\'ar-Shokurov's connectedness theorem.

\begin{lem} \label{lem:delta<1 center intersect}
Let $Z_1,Z_2$ be optimal destabilizing centers of a K-unstable log Fano pair $(X,\Delta)$. Then $Z_1$ has nonempty intersection with $Z_2$.
\end{lem}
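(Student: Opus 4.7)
The plan is to argue by contradiction using Koll\'ar-Shokurov's connectedness theorem. Assume $Z_1 \cap Z_2 = \emptyset$, and pick optimal destabilizing valuations $v_1, v_2 \in \Val^*_X$ with centers $Z_1, Z_2$, normalized so that $A_{X,\Delta}(v_i) = \delta(X,\Delta)$ and $S(v_i) = 1$ for $i = 1, 2$. The strategy is to produce an effective $\bQ$-divisor $D \sim_\bQ L := -(K_X + \Delta)$ and a constant $c \in (\delta, 1)$ such that $\Nklt(X, \Delta + cD) = Z_1 \cup Z_2$. Since $-(K_X + \Delta + cD) \sim_\bQ (1-c)L$ is then ample, Koll\'ar-Shokurov's theorem forces $\Nklt(X, \Delta + cD)$ to be connected; but $Z_1 \sqcup Z_2$ is disconnected under our assumption, a contradiction.

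To construct $D$, I would consider the graded sequence of ideals $\fa_\bullet$ with $\fa_m := \fa_m(v_1) \cap \fa_m(v_2)$. Since $Z_1, Z_2$ are disjoint, the valuation ideals $\fa_m(v_1), \fa_m(v_2)$ are coprime, so $\fa_m = \fa_m(v_1) \cdot \fa_m(v_2)$ and the cosupport of $\fa_\bullet$ is exactly $Z_1 \cup Z_2$. For $m$ sufficiently large and divisible, the space $H^0(X, \fa_m \cdot \cO(mL))$ should be non-zero; this is the crucial technical input, which should follow from a dimension count using the ampleness of $L$, the coprimality of $\fa_m(v_1), \fa_m(v_2)$, and the inequality $T(v_i) \ge S(v_i) = 1$ that ensures each filtration piece $\cF_{v_i}^m H^0(X, mL)$ carries positive asymptotic volume. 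Taking a general $s$ in this space and setting $D := \frac{1}{m}{\rm div}(s)$, one obtains $D \sim_\bQ L$ with $v_i(D) \ge 1$; moreover, since $\fa_m \cdot \cO(mL)$ is base-point free on $X \setminus (Z_1 \cup Z_2)$ for $m$ large (where $\fa_m$ restricts to $\cO_X$), Bertini's theorem guarantees that a general such $D$ has simple normal crossings support on the complement $X \setminus (Z_1 \cup Z_2)$ with coefficients bounded by $1/m$.

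With this $D$ and any $c \in (\delta, 1)$, I then verify both inclusions defining $\Nklt(X, \Delta + cD) = Z_1 \cup Z_2$: first, $A_{X, \Delta + cD}(v_i) \le \delta - c < 0$ shows that $v_i$ violates klt in $(X, \Delta + cD)$ and hence $Z_i \subseteq \Nklt(X, \Delta + cD)$; second, on the open set $X \setminus (Z_1 \cup Z_2)$ the divisor $cD$ has SNC support with coefficients $c/m < 1$, so since $(X, \Delta)$ is klt one concludes that $(X, \Delta + cD)$ remains klt there for $m$ large. Combining these with the Koll\'ar-Shokurov argument from the first paragraph delivers the contradiction, and thus $Z_1 \cap Z_2 \neq \emptyset$.

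The main obstacle is the non-vanishing claim $H^0(X, \fa_m \cdot \cO(mL)) \neq 0$ for $m$ large. Equivalently, one must show that the ``joint Okounkov body'' of $v_1$ and $v_2$ on $L$ contains the point $(1,1)$, or at least that the intersection $\cF_{v_1}^m H^0(X, mL) \cap \cF_{v_2}^m H^0(X, mL)$ is non-trivial for some $m$. Because the cosupports of $v_1, v_2$ are disjoint, the two valuations should behave asymptotically independently on global sections, so that the joint body factors as a product of the two individual bodies, each of which contains $[0,1]$ since $S(v_i) = 1$. Making this rigorous will likely require careful use of the compatible bases from \cite{AZ-K-adjunction}*{Lemma 3.1} combined with Serre/Fujita-type vanishing theorems to convert the asymptotic statement on filtrations into the existence of the required global section.
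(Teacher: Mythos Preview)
Your overall strategy via Koll\'ar--Shokurov connectedness matches the paper's, but the construction of the divisor $D$ is where you diverge, and the gap you flag is real and not easy to close. The non-vanishing claim $H^0(X,\fa_m\cdot\cO(mL))\neq 0$ (and the stronger base-point-freeness on $X\setminus(Z_1\cup Z_2)$ that your Bertini step needs) asks for uniform control of the growth of $\fa_m$ against the positivity of $mL$, which Serre vanishing cannot give since both scale with $m$. Your proposed ``product structure'' of the joint Okounkov body for valuations with disjoint centers is plausible but not standard, and a naive codimension count is inconclusive: each $\cF_{v_i}^m H^0(X,mL)$ can have asymptotic dimension as small as $\tfrac{1}{2}N_m$ (already on $\bP^1$ with $v_i=2\,\ord_{p_i}$), so the two subspaces need not meet for dimension reasons alone.

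The paper sidesteps this completely. Rather than hunting for a single section with high vanishing along both $v_i$, it takes an $m$-basis type $\bQ$-divisor $D$ that is \emph{simultaneously compatible} with $v_1$ and $v_2$; such $D$ exists by \cite{AZ-K-adjunction}*{Lemma 3.1}, and compatibility automatically gives $v_i(D)=S_m(v_i)$ for both $i$. The cost is that $(X,\Delta+\delta_m D)$ is only lc, so its non-klt locus may be strictly larger than $Z_1\cup Z_2$. The paper handles this by a tie-breaking perturbation: pick a general $H\in|\cI_{Z_1\cup Z_2}\otimes\cO_X(-r(K_X+\Delta))|$ and consider $(X,\Delta+(1-\gamma)\delta_m D+\epsilon H)$ for $0<\gamma\ll\epsilon\ll 1$. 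Shrinking the coefficient of $D$ makes the pair klt everywhere, and adding $\epsilon H$ (which is general, hence creates no new lc centers) pushes it back to non-klt precisely along $Z_1\cup Z_2$; the parameters are arranged so that the anti-log-canonical class stays ample. In short, \cite{AZ-K-adjunction}*{Lemma 3.1} is indeed the right ingredient, but it should be used to \emph{build} $D$ directly, not to justify an auxiliary non-vanishing statement.
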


\begin{proof}
Suppose that $Z_1\cap Z_2=\emptyset$, we will derive a contradiction. Let $r$ be a sufficiently large and divisible integer such that $\cI_{Z_1\cup Z_2}\otimes \cO_X(-r(K_X+\Delta))$ is globally generated (where $\cI_{Z_1\cup Z_2}$ denotes the ideal sheaf of $Z_1\cup Z_2$) and fix some $\epsilon$ with $0<r\epsilon<1-\delta(X,\Delta)$. By assumption, there exist valuations $v_1,v_2\in\Val^*_X$ with centers $Z_1,Z_2$ such that 
\[
\delta(X,\Delta)=\frac{A_{X,\Delta}(v_1)}{S(v_1)}=\frac{A_{X,\Delta}(v_2)}{S(v_2)}.
\]
Let $m\gg 0$ be such that $r\epsilon<1-\delta_m(X,\Delta)$ and 
\begin{equation} \label{eq:tiebreak at v_i}
    \epsilon\cdot v_i(\cI_{Z_i})+\delta_m(X,\Delta) S_m(v_i)>A_{X,\Delta}(v_i)
\end{equation}
for both $i=1,2$. Let $D$ be an $m$-basis type $\bQ$-divisor of $(X,\Delta)$ that's compatible with both $v_i$ (which exists by \cite{AZ-K-adjunction}*{Lemma 3.1}) and let $H$ be a general member of $|\cI_{Z_1\cup Z_2}\otimes \cO_X(-r(K_X+\Delta))|$. Then we have $v_i(D)=S_m(v_i)$, hence by \eqref{eq:tiebreak at v_i} we get $A_{X,\Delta+\delta_m D+\epsilon H}(v_i)<0$ (where $\delta_m:=\delta_m(X,\Delta)$). By the definition of $\delta_m$, we also know that $(X,\Delta+\delta_m D)$ is lc, therefore as $H$ is general, we see that $(X,\Delta+(1-\gamma)\delta_m D+\epsilon H)$ is klt away from $Z_1\cup Z_2$ while
\[
A_{X,\Delta+(1-\gamma)\delta_m D+\epsilon H}(v_i)<0
\]
for all $0<\gamma\ll 1$. In other words, $\Nklt(X,\Delta+(1-\gamma)\delta_m D+\epsilon H)=Z_1\cup Z_2$. But as $-(K_X+\Delta+(1-\gamma)\delta_m D+\epsilon H)\sim_\bQ -(1-(1-\gamma)\delta_m-r\epsilon)(K_X+\Delta)$ is ample (by our choice of $m$ and $\epsilon$) and $Z_1$ is disjoint from $Z_2$, this contradicts Koll\'ar-Shokurov's connectedness theorem \cite{K+-flip}*{Theorem 17.4}.
\end{proof}

We are ready to prove Theorem \ref{thm:minimal delta<1 center}.

\begin{proof}[Proof of Theorem \ref{thm:minimal delta<1 center}]
Let $Z\subseteq X$ be a minimal (with respect to inclusion) optimal destabilizing center of $(X,\Delta)$. We claim that it satisfies the statement of the theorem. Let $Z'$ be another optimal destabilizing center of $(X,\Delta)$. By Lemma \ref{lem:delta<1 center intersect}, $Z'$ has nonempty intersection with $Z$; on the other hand, by Lemma \ref{lem:intersection of two delta center}, $Z\cap Z'$ is a union of optimal destabilizing centers of $(X,\Delta)$. Since $Z$ is minimal with respect to inclusion, this implies $Z\subseteq Z'$, which completes the proof.
\end{proof}

\begin{expl}
From the uniqueness it is clear that the minimal optimal destabilizing center constructed in Theorem \ref{thm:minimal delta<1 center} is invariant under the automorphism group of $(X,\Delta)$. This helps us to identify the center in many cases. For example, if $X$ is the blowup of one or two points on $\bP^2$, then there is a unique $\Aut(X)$-invariant $(-1)$-curve on $X$ which is necessarily the minimal optimal destabilizing center.
\end{expl}

\section{Equivariant K-stability} \label{sec:equiv K=K}

In this section, we show that to compute the stability threshold of a geometrically K-unstable log Fano pair, it is enough to use divisors that are defined over the base field and invariant under the automorphism group. It will imply all remaining results mentioned in the introduction. 

\begin{thm} \label{thm:G-delta=delta}
Let $(X,\Delta)$ be a log Fano pair and let $G=\Aut(X,\Delta)$. Assume that $(\Xkb,\Deltakb)$ is not K-semistable. Then we have 
\begin{equation} \label{eq:delta=inf over G-inv div}
    \delta(\Xkb,\Deltakb)=\inf_E \frac{A_{X,\Delta}(E)}{S(E)}
\end{equation}
where the infimum runs over all $G$-invariant geometrically irreducible divisors $E$ over $X$ that are lc places of complements.
\end{thm}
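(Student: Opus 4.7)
\textbf{Proof plan for Theorem \ref{thm:G-delta=delta}.}

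One direction, $\delta(\Xkb,\Deltakb)\le \inf_E A_{X,\Delta}(E)/S(E)$, is immediate: each $G$-invariant geometrically irreducible divisor $E$ over $X$ induces a (possibly reducible, but still meaningful) divisor over $\Xkb$ whose components all contribute the same ratio $A/S$. So the core of the theorem is the reverse inequality. The plan is to fix $\epsilon>0$ and produce a $G$-invariant geometrically irreducible divisor $E$ over $X$ which is an lc place of complement and satisfies $A_{X,\Delta}(E)/S(E)<\delta(\Xkb,\Deltakb)+\epsilon$. The proof will be by induction on $\dim X$, applied to a boundary-enriched generalization (the forthcoming Theorem \ref{thm:G-delta=delta for boundary}) so that the statement is stable under the adjunction mechanism.

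First I would produce a canonical $G$-invariant ``test object'' using the uniqueness of Theorem \ref{thm:minimal delta<1 center}. Applying that theorem over $\kbar$ yields a unique minimal optimal destabilizing center $Z\subseteq \Xkb$. By uniqueness, $Z$ is stable under $G\times \Gal(\kbar/k)$, hence descends to a $G$-invariant geometrically irreducible $k$-subvariety of $X$. Next I would build a $G$-invariant valuation $v\in\Val^*_X$ centered at (the generic point of) $Z$ with $A_{X,\Delta}(v)/S(v)\le \delta(\Xkb,\Deltakb)$. The argument mimics the proof of Lemma \ref{lem:intersection of two delta center}: pick an optimal destabilizing valuation $v_0$ centered at a component of $Z_{\kbar}$, form the $(G\rtimes\Gal(\kbar/k))$-invariant graded sequence
\[
\fa_m:=\bigcap_{g} \fa_m(g\cdot v_0),
\]
observe that $Z$ is a non-klt center of $(X,\Delta+\fa_\bullet^\lambda)$ for $\lambda=1/A_{X,\Delta}(v_0)$, and extract a $G$-invariant $v$ via Lemma \ref{lem:non-klt center seq of ideals}. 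A standard comparison using basis-type divisors compatible with every $g\cdot v_0$ (via \cite{AZ-K-adjunction}*{Lemma 3.1}) shows $S(v)\ge S(v_0)$, hence $A_{X,\Delta}(v)/S(v)\le \delta(\Xkb,\Deltakb)$.

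The third and main step is to replace the abstract valuation $v$ by a $G$-invariant divisorial valuation $\ord_E$ with $E$ an lc place of a complement of $(X,\Delta)$. The idea is to run induction on $\dim X$ via the restriction/adjunction machinery of \cite{AZ-K-adjunction}. If $Z=X$, the valuation $v$ is already centered at the generic point of $X$; here one argues directly that $v$ can be approximated by divisorial lc places of $\bQ$-complements using a log resolution and Remark \ref{rem:dreamy}, whereupon Proposition \ref{prop:G-valuative criterion} and the $G$-equivariance are compatible. If $Z\subsetneq X$, I would find a $G$-invariant divisor $E$ over $X$ with $C_X(E)=Z$ that is an lc place of a $\bQ$-complement — obtained by perturbing the non-klt locus construction above and applying tie-breaking together with \cite{BCHM} to extract a unique $G$-invariant Koll\'ar component. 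Via the Abban--Zhuang adjunction, the stability threshold on $(X,\Delta)$ is controlled by an analogous invariant on the pair $(E,\Delta_E)$ coming from Kawamata subadjunction. Because $\dim E<\dim X$, the inductive hypothesis (in the boundary-enriched version) furnishes a $G$-invariant divisor $F$ over $E$ computing that invariant up to $\epsilon$, which then lifts to the desired $G$-invariant geometrically irreducible divisor over $X$ through the cone-type construction in \cite{AZ-K-adjunction}.

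The hard part is Step 3: guaranteeing that the $G$-invariant lc-place approximation can actually be produced and that the adjunction matches the geometric and equivariant $\delta$-invariants exactly. In particular, the divisor $E$ one obtains is not itself log Fano, so the statement must be strong enough to handle pairs together with a boundary divisor that does not form a log Fano pair; this is precisely what Theorem \ref{thm:G-delta=delta for boundary} is set up to do, and carrying that induction while preserving both the $G$-action and the ``lc place of complement'' condition at every step is the main technical challenge.
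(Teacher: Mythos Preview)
Your overall architecture --- reduce to an auxiliary boundary version (Theorem \ref{thm:G-delta=delta for boundary}) and induct on dimension via Abban--Zhuang adjunction --- matches the paper. But your Step 2 contains a real gap when $G$ is positive-dimensional. The intersection $\fa_m=\bigcap_g \fa_m(g\cdot v_0)$ is over an infinite family of ideals, and \cite{AZ-K-adjunction}*{Lemma 3.1} only produces basis-type divisors compatible with \emph{finitely many} filtrations, so the comparison $S(v)\ge S(v_0)$ has no justification. Worse, Lemma \ref{lem:non-klt center seq of ideals} does not produce a $G$-invariant valuation: it merely asserts the existence of \emph{some} valuation centered at the given component, and there is no reason this valuation should be $G$-invariant even when the graded sequence $\fa_\bullet$ is. So after Step 2 you do not actually have a $G$-invariant $v$ computing $\delta$.

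The paper sidesteps this by never trying to build a $G$-invariant valuation out of the orbit of a non-invariant one. Instead it drops to the finite level $\delta_m$, i.e.\ to the finite-dimensional boundary $V_m=\frac{1}{m}|-m(K_X+\Delta)|$, where $G$-invariant \emph{filtrations} are parametrized by a closed subset of a flag variety and the infimum over them is attained. The equivalence of the two equivariant invariants (via $G$-invariant filtrations and via $G$-invariant plt-type divisors) is Lemma \ref{lem:G-filt vs G-div for delta}, whose proof rests on an equivariant Koll\'ar-component extraction (Lemma \ref{lem:G-kollar-comp}). This is the substitute for your Step 2: rather than averaging a valuation over $G$, one takes a $G$-invariant filtration computing $\tdelta_{Z,G}$ and then uses tie-breaking and \cite{BCHM} to extract a $G$-invariant plt-type divisor realizing the same lct. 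The inductive step (Lemma \ref{lem:inductive step}) then compares $\delta_{Z,G}(V)$ with $\delta_{\Zkb}(\Vkb)$ via adjunction, and only at the very end does one pass from $\delta_m$ back to $\delta$ using the uniform estimate $S_m\le (1+\epsilon)S$ from \cite{BJ-delta}. Also, a minor point: the case ``$Z=X$'' in your Step 3 cannot occur, since an optimal destabilizing center is always a proper subvariety; the base case of the induction is rather $Z$ being a divisor, in which case $E=Z$ already works.
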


We remark that as in Theorem \ref{thm:minimal delta<1 center}, the statement fails in general if $(\Xkb,\Deltakb)$ is K-semistable; for example, $\bP^n$ does not even have ${\rm PGL}_{n+1}$-invariant divisors over it.

We will prove Theorem \ref{thm:G-delta=delta} by induction on the dimension. However, for the induction to work, we need to slightly generalize the context and we are naturally led to consider boundaries of the following form. Similar objects have already appeared in the work \cite{AZ-K-adjunction} as we refine a linear series by a divisor.

\begin{defn}
Let $(X,\Delta)$ be a pair. A \emph{boundary} on $X$ is a linear combination $V=a_1 V_1+\cdots+a_r V_r$ where $a_i\in\bQ_+$ and $V_i$ are finite dimensional linear series associated to $\bQ$-Cartier divisors (i.e. there exist some effective $\bQ$-Cartier divisors $L_i$ on $X$ such that $V_i\subseteq H^0(X,L_i)$ is a finite dimensional subspace). It's divisor class $c_1(V)\in {\rm NS}(X)_\bQ$ is defined in the obvious way. We say $V$ is $G$-invariant (where $G\subseteq \Aut(X)$ is a subgroup) if the linear series $V_i$ are all $G$-invariant. A \emph{filtration} $\cF$ of $V$ is given by a collection of subspaces $\cF^\lambda V_i \subseteq V_i$ for each $\lambda\in\bR$ and $i=1,\cdots,r$ such that
\begin{enumerate}
    \item $\cF^\lambda V_i \subseteq \cF^{\lambda'} V_i$ whenever $\lambda\ge \lambda'$,
    \item $\cF^\lambda V_i=V_i$ for $\lambda\ll 0$ and $\cF^\lambda V_i=0$ for $\lambda\gg 0$.
\end{enumerate}
As a typical example, every valuation $v\in\Val_X$ induces a filtration $\cF_v$ on $V$. A basis type $\bQ$-divisor of $V$ is a divisor of the form $D=a_1 D_1+\cdots+a_r D_r$ where each $D_i\sim_\bQ L_i$ is a basis type $\bQ$-divisor of $V_i$, i.e.
\[
D_i=\frac{1}{\dim V_i} \sum_{j=1}^{\dim V_i} \{s_j=0\}
\]
where the $s_j$ form a basis of $V_i$. We say that  $D$ is compatible with a filtration $\cF$ on $V$ if every $\cF^\lambda V_i$ is spanned by some $s_j$ in the above expression. 
In particular, we say that $D$ is compatible with a divisor $E$ over $X$ if it is compatible with the filtration $\cF_{\ord_E}$.
\end{defn}

\begin{defn}
Let $U$ be a quasi-projective variety and let $f\colon (X,\Delta)\to U$ be a klt pair that's projective over $U$. Let $V$ be a boundary on $X$, let $E$ be a divisor over $X$ and let $Z\subseteq X$ be a subvariety. We set
\[
S(V;E):=\sup_D \ord_E(D)=\max_D \ord_E(D),
\]
\[
\delta_Z(V)=\delta_Z(X,\Delta;V):=\inf_D \lct_Z(X,\Delta;D)=\min_D \lct_Z(X,\Delta;D)\in \bR\cup \{+\infty\}
\]
where the supremum or infimum runs over all basis type $\bQ$-divisors $D$ of $V$. Clearly $\delta_Z(V)<+\infty$ if and only if there exists some $1\le i\le r$ and some $s\in V_i$ such that $Z\subseteq \{s=0\}$. Note also that as the linear series $V_i$ are finite dimensional, the set of basis type $\bQ$-divisors is bounded, hence the above supremum (resp. infimum) is a  maximum (resp. minimum) by the constructibility of $\ord_E$ (resp. log canonical thresholds) in family. It is easy to see that
\[
\delta_Z(V)=\inf_E \frac{A_{X,\Delta}(E)}{S(V;E)}=\min_E \frac{A_{X,\Delta}(E)}{S(V;E)}
\]
where the infimum runs over all divisors $E$ over $X$ whose center contains $Z$. If $Y\subseteq U$ is a subvariety, we also define the $\delta$-invariant of $V$ (with respect to $(X,\Delta)$) over $Y$ to be
\[
\delta(V/Y)=\delta(X,\Delta;V/Y):=\inf_Z \delta_Z(V)
\]
where the infimum runs over all subvarieties $Z\subseteq X$ whose image in $U$ contains $Y$. In other words, we only take the log canonical thresholds of basis type divisors along the fiber of $f$ over the generic point of $Y$. Let $G$ be an algebraic group. A $G$-action on $(X,\Delta)$ over $U$ is given by a $G$-action on both $(X,\Delta)$ and $U$ making the projection $f$ equivariant.
\end{defn}

We now state a technical result which will imply Theorem \ref{thm:G-delta=delta}. It is specifically designed for inductive purpose and will eventually be applied to the various complete linear series $|-m(K_X+\Delta)|$ of a K-unstable log Fano pair $(X,\Delta)$ to show that all $\delta_m(\Xkb,\Deltakb)$ ($m\gg 0$) are computed by an $\Aut(X,\Delta)$-invariant geometrically irreducible divisor over $X$.

\begin{thm} \label{thm:G-delta=delta for boundary}
Let $G$ be an algebraic group, let $f\colon (X,\Delta)\to U$ be a klt pair that's projective over $U$ with a $G$-action, let $Y\subseteq U$ be a $G$-invariant geometrically irreducible subvariety and let $V$ be a $G$-invariant boundary on $X$. Assume that $\delta(V_{\kbar}/Y_{\kbar})<+\infty$ and that $-(K_X+\Delta+\delta(V_{\kbar}/Y_{\kbar})c_1(V))$ is $f$-ample. Then there exists some $G$-invariant geometrically irreducible divisor $E$ over $X$ whose center dominates $Y$ such that
\[
\delta(V_{\kbar}/Y_{\kbar})=\frac{A_{X,\Delta}(E)}{S(V;E)}.
\]
\end{thm}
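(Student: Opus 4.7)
Set $\delta := \delta(V_{\kbar}/Y_{\kbar})$. My plan is to first establish a relative, boundary-analogue of the minimal optimal destabilizing center from Section \ref{sec:destab centers}, and then pass from a minimizing valuation to a $G$-invariant divisor by an Abban--Zhuang-style adjunction induction on dimension. Call a subvariety $Z_{\kbar} \subseteq X_{\kbar}$ a $\delta$-computing center if $f(Z_{\kbar}) \supseteq Y_{\kbar}$ and $\delta_{Z_{\kbar}}(V_{\kbar}) = \delta$. Given two such centers with minimizing valuations $v_1, v_2$, the graded sequence $\fa_m := \fa_m(v_1) \cap \fa_m(v_2)$ together with Lemmas \ref{lem:non-klt center seq of ideals} and \ref{lem:intersection of two non-klt center} shows, exactly as in Lemma \ref{lem:intersection of two delta center}, that every component of $Z_1 \cap Z_2$ dominating $Y_{\kbar}$ is itself a $\delta$-computing center; this needs an extension of \cite{AZ-K-adjunction}*{Lemma 3.1} to boundaries, guaranteeing a basis type $\bQ$-divisor of $V$ simultaneously compatible with $v_1$ and $v_2$. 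For the non-emptiness of $Z_1 \cap Z_2$ over $Y_{\kbar}$ one imitates Lemma \ref{lem:delta<1 center intersect}, perturbing to $(X, \Delta + (1-\gamma)\delta_m D + \epsilon H)$ for an auxiliary $H$ isolating $Z_1 \sqcup Z_2$ and invoking Koll\'ar--Shokurov connectedness fiberwise over the generic point of $Y$; the $f$-ampleness hypothesis on $-(K_X+\Delta+\delta \cdot c_1(V))$ is precisely what makes the perturbed pair relatively log Fano over $Y_{\kbar}$.

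Combining these two steps yields a unique minimal $\delta$-computing center $Z_{\kbar}$, and by uniqueness it is stable under both $G$ and $\Gal(\kbar/k)$, so descends to a $G$-invariant geometrically irreducible $Z \subseteq X$ with $f(Z) = Y$. A further application of the same graded-sequence technology, now applied to a $G$-stable family of minimizing valuations centered at $Z$ (using $G$-invariant envelopes of the valuation ideals to handle potentially infinite orbits), produces a $G$-invariant $v \in \Val^*_X$ with center $Z$ and $A_{X,\Delta}(v) = \delta \cdot S(V; v)$.

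It remains to replace $v$ by a divisor. If $v = c \cdot \ord_E$ is divisorial, then $E$ is automatically $G$-invariant, and its geometric irreducibility is forced by that of $Z$ together with uniqueness of the generic point of $E$ over $Z$. In general I would induct on $\dim X$: the filtration $\cF_v$ refines $V$ to a $G$-invariant boundary $V'$ on a plt extraction of $Z$, and adjunction transfers both the $f$-ampleness and the $\delta$-computation to this strictly lower-dimensional relative setup, where the induction hypothesis produces an invariant divisor that pushes back to the desired $E$. The main obstacle will be exactly this last step: one has to verify that after the refinement the new pair is still klt, that $V'$ really is a boundary satisfying the hypotheses of the theorem, and that the $f$-ampleness survives the adjunction. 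Formulating the theorem in the generality of boundaries over an arbitrary base $U$, rather than for anticanonical linear series on log Fano pairs, is precisely what allows this inductive loop to close on itself.
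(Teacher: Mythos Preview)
Your outline for producing the $G$-invariant center $Z$ is essentially correct and close to the paper's Lemma~\ref{lem:G-inv center}; the paper's version is somewhat more economical because for a boundary the linear series are fixed and finite-dimensional, so one can work with the minimal lc center of a single well-chosen pair $(X,\Delta+\delta D_0)$ rather than transplanting the graded-sequence machinery of Section~\ref{sec:destab centers}, but your version would also go through.

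The genuine gap is in the second half. The step ``using $G$-invariant envelopes of the valuation ideals to handle potentially infinite orbits'' is not a known construction: intersecting or averaging valuation ideals over an infinite group does not in general produce the ideal sequence of a valuation, so you have no mechanism to obtain a $G$-invariant $v$ that still computes $\delta$. And even granting such a $v$, your induction step speaks of ``a plt extraction of $Z$'' without having a divisor to extract; one extracts divisors, not centers of arbitrary codimension, so when $v$ is not divisorial there is nothing to which one can apply adjunction.

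The paper sidesteps both problems by \emph{not} attempting to make a $\delta$-minimizing valuation $G$-invariant. Instead it introduces the equivariant invariant $\delta_{Z,G}(V)=\tdelta_{Z,G}(V)$ defined via $G$-invariant filtrations (Lemma~\ref{lem:G-filt vs G-div for delta}); this is an lct of a $G$-invariant $\bQ$-ideal, so by a tie-breaking argument (Lemma~\ref{lem:G-kollar-comp}) it is always computed by some $G$-invariant plt-type divisor $E$. A~priori this $E$ only computes $\delta_{Z,G}(V)\ge \delta_{\Zkb}(\Vkb)$. The induction then runs on the divisor $E$ itself: the filtered restriction $W$ of $V$ to $E$ is a boundary on the $(n-1)$-dimensional pair $(E,\Delta_E)$ over $Z$, and the inductive hypothesis on $E$ (not on $X$) shows $\delta(W_{\kbar}/\Zkb)\ge \delta$, which via the Abban--Zhuang adjunction inequality forces $\delta_{\Zkb}(\Vkb)\ge \delta$ and hence equality (Lemma~\ref{lem:inductive step}). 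The point is that the object one inducts on is the $G$-invariant divisor already in hand, not a hypothetical extraction of the center.
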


We divide the proof of Theorem \ref{thm:G-delta=delta for boundary} into several steps. Using ideas from the previous section, we first show that the it is enough to calculate the geometric $\delta$-invariant at some $G$-invariant geometrically irreducible center.

\begin{lem} \label{lem:G-inv center}
Under the assumptions of Theorem \ref{thm:G-delta=delta for boundary}, there exists some $G$-invariant geometrically irreducible subvariety $Z\subseteq X$ such that $Y\subseteq f(Z)$ and $\delta(V_{\kbar}/Y_{\kbar})=\delta_{Z_{\kbar}}(V_{\kbar})$.
\end{lem}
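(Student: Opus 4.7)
The plan is to adapt Section~\ref{sec:destab centers} to the boundary setting: produce a unique minimal $\delta$-computing subvariety of $X_{\kbar}$ dominating $Y_{\kbar}$, then exploit its uniqueness to descend it to a $G$-invariant, geometrically irreducible subvariety over $k$. Set $\delta := \delta(V_{\kbar}/Y_{\kbar})$ and call $Z \subseteq X_{\kbar}$ a \emph{$\delta$-computing center over $Y$} if $f(Z) \supseteq Y_{\kbar}$ and $\delta_{Z}(V_{\kbar}) = \delta$, equivalently if there exists $v \in \Val^*_{X_{\kbar}}$ whose center contains $Z$ and satisfies $A_{X,\Delta}(v) = \delta \cdot S(V_{\kbar}; v)$. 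At least one such $Z$ exists: the space of basis-type $\bQ$-divisors of $V_{\kbar}$ is bounded and $\lct$ is constructible over it, while shrinking $Z$ (keeping $f(Z) \supseteq Y_{\kbar}$) can only decrease $\delta_Z$.

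The first step, adapting Lemma~\ref{lem:delta<1 center intersect}, is to show that any two $\delta$-computing centers $Z_1, Z_2$ over $Y$ meet over the generic point of $Y_{\kbar}$. Use \cite{AZ-K-adjunction}*{Lemma 3.1} componentwise in each $V_i$ to produce a basis-type $\bQ$-divisor $D$ of $V_{\kbar}$ simultaneously compatible with minimizing valuations at $Z_1$ and $Z_2$; choose a general member $H$ of a sufficiently positive $f$-ample linear system vanishing along $Z_1 \cup Z_2$; and take a tiebreaking perturbation $(X_{\kbar}, \Delta_{\kbar} + (1-\gamma)\delta D + \epsilon H)$ whose non-klt locus is exactly $Z_1 \cup Z_2$ over the generic point of $Y_{\kbar}$. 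The hypothesis that $-(K_X + \Delta + \delta c_1(V))$ is $f$-ample keeps $-K_X - \Delta - (1-\gamma)\delta D - \epsilon H$ relatively ample for $\gamma, \epsilon$ small, so the relative Koll\'ar--Shokurov connectedness theorem applied over the generic point of $Y_{\kbar}$ forces $Z_1 \cap Z_2$ to dominate $Y_{\kbar}$. The second step, adapting Lemma~\ref{lem:intersection of two delta center}, is to show that each component $Z$ of $Z_1 \cap Z_2$ dominating $Y_{\kbar}$ contains a $\delta$-computing center. Normalize $A(v_i) = 1$ so that $S(V_{\kbar}; v_i) = 1/\delta$, form the graded sequence $\fa_m := \fa_m(v_1) \cap \fa_m(v_2)$ on $X_{\kbar}$, and invoke Lemma~\ref{lem:non-klt center seq of ideals} to obtain $v \in \Val^*_{X_{\kbar}}$ centered on $Z$ with $v(\fa_\bullet) \geq A(v)$; a basis-type $\bQ$-divisor $D$ of $V_{\kbar}$ compatible with both $v_1, v_2$ (again via \cite{AZ-K-adjunction}*{Lemma 3.1} componentwise) and the argument from the proof of Lemma~\ref{lem:intersection of two delta center} then yield $S(V_{\kbar}; v) \geq v(D) \geq \min\{S(V_{\kbar}; v_1), S(V_{\kbar}; v_2)\} = 1/\delta$, so $v$ computes $\delta$ at $Z$.

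Combining these two steps, the minimal $\delta$-computing center $Z \subseteq X_{\kbar}$ over $Y$ is unique: two minimal centers must meet (by the intersection step), and any component of their intersection contains a $\delta$-computing center (by the combination step), which by minimality equals both. Since $V$ and $Y$ are $G$-invariant and defined over $k$, both the $G$-action and the Galois action of $\Gal(\kbar/k)$ permute the $\delta$-computing centers over $Y$ and hence fix the unique minimal one, so $Z$ is $G$-invariant and descends to a geometrically irreducible subvariety of $X$ over $k$. The main obstacle I expect is upgrading the pointwise bound $v(f) \geq \min\{v_1(f), v_2(f)\}$ to the divisorial bound $v(D) \geq \min\{S(V_{\kbar}; v_1), S(V_{\kbar}; v_2)\}$ when $V = \sum a_i V_i$ has several components, which requires \cite{AZ-K-adjunction}*{Lemma 3.1} to be run separately in each $V_i$ so that the compatibility of $D$ with both $v_1$ and $v_2$ is preserved termwise across the linear combination.
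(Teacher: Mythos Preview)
Your approach is correct and gives a genuinely different proof from the paper's. One small correction: in your second step you should invoke Lemma~\ref{lem:intersection of two non-klt center} rather than Lemma~\ref{lem:non-klt center seq of ideals} directly, since a component $Z$ of $Z_1\cap Z_2$ need not be an irreducible component of $\bigcap_m \Nklt(X_{\kbar},\Delta_{\kbar}+\fa_m^{1/m})$ (the latter typically contains all of $Z_1\cup Z_2$); Lemma~\ref{lem:intersection of two non-klt center} is precisely what manufactures a valuation centered on $Z$ from the graded sequence. Your worry about the inequality $v(D)\ge\min\{S(V_{\kbar};v_1),S(V_{\kbar};v_2)\}$ is justified, but note it does not follow from the termwise bound $v(s_j)\ge\min\{v_1(s_j),v_2(s_j)\}$; it comes from applying $v(\fa_\bullet)\ge 1$ to a suitable monomial in the basis elements $s_{i,j}$, which lies deep in $\fa_\bullet$ because its $v_1$- and $v_2$-values are both large. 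This works componentwise in each $V_i$ exactly as you suggest.

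The paper's own argument is shorter and structurally different. Instead of developing a full minimal-center theory for boundaries, it fixes a single basis-type divisor $D_0$ of $V_{\kbar}$ realizing $\lct=\delta$ near $X_\eta$ whose minimal lc center $Z$ has smallest possible dimension. For $g\in\Gal(\kbar/k)\times G(\kbar)$ and an lc place $E_0$ over $Z$, it builds a new basis-type divisor $D$ compatible with both $E_0$ and $g(E_0)$, so that $Z$ and $g(Z)$ become lc centers of the \emph{same} pair $(X_{\kbar},\Delta_{\kbar}+\delta D)$; by the dimension minimality they are minimal lc centers, hence equal or disjoint by \cite{Kol-mmp}*{Corollary 4.41}, and connectedness with a tie-breaking divisor rules out disjointness. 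Thus the paper replaces your graded-ideal step entirely by Koll\'ar's result on minimal lc centers of a fixed pair. Your route proves more---a unique minimal $\delta$-computing center for boundaries, paralleling Theorem~\ref{thm:minimal delta<1 center}---at the cost of importing the machinery of Section~\ref{sec:destab centers}; the paper's route is more economical for the bare statement of Lemma~\ref{lem:G-inv center}.
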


\begin{proof}
Let $\delta=\delta(V_{\kbar}/Y_{\kbar})$ and let $\eta$ be the generic point of $Y_{\kbar}$. By definition, we have $\lct(\Xkb,\Deltakb;D)=\delta$ in a neighbourhood of $X_\eta$ for some $m$-basis type $\bQ$-divisors $D$ of $\Vkb$. Among such $D$ we choose one (call it $D_0$) such that the minimal lc center (denoted by $Z$) of $(\Xkb,\Deltakb+\delta D_0)$ that intersects $X_\eta$ has the smallest dimension. Clearly $\delta_Z(\Vkb)=\delta$. It remains to show that $Z$ can be defined over $k$ and is $G$-invariant. To this end, let $E_0$ be an lc place of $(\Xkb,\Deltakb+\delta D_0)$ with $C_{\Xkb}(E_0)=Z$, let $g\in \Gal(\kbar/k)\times G(\kbar)$ and let $E_1=g(E_0)$ (viewed as a divisor over $\Xkb$). Since $V$ is $G$-invariant, we get $A_{\Xkb,\Deltakb}(E_0)=A_{\Xkb,\Deltakb}(E_1)$ and $S(\Vkb;E_0)=S(\Vkb;E_1)$. By \cite{AZ-K-adjunction}*{Lemma 3.1}, we can choose a basis type $\bQ$-divisor $D$ of $\Vkb$ that's compatible with both $E_0$ and $E_1$. In particular, we have $\ord_{E_i}(D)=S(\Vkb;E_i)$ for both $i=0,1$. By our choice of $E_i$ and the definition of $\delta(\Vkb/\Ykb)$, we see that (in a neighbourhood of $X_\eta$)
\[
\delta=\frac{A_{\Xkb,\Deltakb}(E_i)}{S(\Vkb;E_i)}=\frac{A_{\Xkb,\Deltakb}(E_i)}{\ord_{E_i}(D)}\ge \lct(\Xkb,\Deltakb;D)\ge \delta
\]
for $i=0,1$. It follows that $\lct(\Xkb,\Deltakb;D)=\delta$ along $X_\eta$ and is computed by both divisors $E_i$. Clearly $g(Z)=C_{\Xkb}(E_1)$. Suppose that $C_{\Xkb}(E_0)\neq C_{\Xkb}(E_1)$. After possibly shrinking to an open neighbourhood of $\eta\in U$, we may assume that $U$ is affine (we don't need the $G$-action any more), $(\Xkb,\Deltakb+\delta D)$ is lc and $C_{\Xkb}(E_i)$ are minimal lc centers of $(\Xkb,\Deltakb+\delta D)$ (otherwise the dimension of minimal lc center can be made smaller), hence $C_{\Xkb}(E_0)$ is disjoint from $C_{\Xkb}(E_1)$ by \cite{Kol-mmp}*{Corollary 4.41}. Let $H\sim_\bQ -(K_{\Xkb}+\Deltakb+\delta c_1(\Vkb))$ be general among effective divisors that contains $C_{\Xkb}(E_0)\cup C_{\Xkb}(E_1)$ in its support (it is enough to ensure that $\Supp(H)$ doesn't contain other lc centers of $(\Xkb,\Deltakb+\delta D)$). Then for $0<\epsilon\ll \gamma\ll 1$, we have 
\[
\Nklt(\Xkb,\Deltakb+(1-\epsilon)\delta D+\gamma H)=C_{\Xkb}(E_0)\cup C_{\Xkb}(E_1)
\]
and
\[
-(K_{\Xkb}+\Deltakb+(1-\epsilon)\delta D+\gamma H)\sim_\bQ -(1-\gamma)(K_{\Xkb}+\Deltakb+\delta D)+\epsilon\delta D
\]
is ample. This contradicts Koll\'ar-Shokurov's connectedness theorem \cite{K+-flip}*{Theorem 17.4}. Hence we must have $Z=C_{\Xkb}(E_0)=C_{\Xkb}(E_1)=g(Z)$. As $g\in \Gal(\kbar/k)\times G(\kbar)$ is arbitrary, we see that $Z$ is defined over $k$ and $G$-invariant.
\end{proof}

Given this $G$-invariant center, we would like to find a $G$-invariant geometrically irreducible divisor $E$ that computes the `$G$-equivariant' $\delta$-invariant. A priori, there are two ways to define such equivariant $\delta$-invariants: one using $G$-invariant geometrically irreducible divisors over $X$ and the other using $G$-invariant filtrations of the linear series. The second version is more suitable for induction but it does not see the $G$-invariant divisor over $X$ directly. Therefore, the second step in our proof is to compare these two definitions and show that they are actually equivalent. To state the result, we need some more definitions.

Let $V=\sum a_i V_i$ be a boundary on $X$. 
For any filtration $\cF$ of $V$, we define $\fa(\cF)$ to be the $\bQ$-ideal given by 
\[
\fa(\cF):=\prod_{i,\lambda} \fb(\cF^\lambda V_i)^{\frac{a_i}{\dim V_i}\cdot \dim \Gr^\lambda_\cF V_i}
\]
where $\fb(\cF^\lambda V_i)$ denotes the base ideal of the linear series $\cF^\lambda V_i$. 

\begin{defn}
Let $G$ be an algebraic group, let $(X,\Delta)$ be a quasi-projective pair with a $G$-action and let $V$ be a $G$-invariant boundary on $X$. Let $Z\subseteq X$ be a $G$-invariant geometrically irreducible subvariety such that $(X,\Delta)$ is klt along the generic point of $Z$. We set 
\begin{equation} \label{eq:G-delta defn via filtration}
    \tdelta_{Z,G}(V)=\tdelta_{Z,G}(X,\Delta;V):=\inf_{\cF} \lct_Z(X,\Delta;\fa(\cF))
\end{equation}
where the infimum runs over all $G$-invariant filtrations $\cF$ on $V$. We also define 
\begin{equation} \label{eq:G-delta defn via divisor}
    \delta_{Z,G}(V)=\delta_{Z,G}(X,\Delta;V):=\inf_E \frac{A_{X,\Delta}(E)}{S(V;E)}
\end{equation}
where the infimum runs over all $G$-invariant geometrically irreducible divisors $E$ over $X$ whose center contains $Z$.
\end{defn}

\begin{lem} \label{lem:G-filt vs G-div for delta}
In the above notation, we have $\delta_{Z,G}(V)=\tdelta_{Z,G}(V)$. Moreover, the infimum in \eqref{eq:G-delta defn via divisor} is achieved by some $G$-invariant divisor over $X$ that's of plt type at the generic point of $Z$.
\end{lem}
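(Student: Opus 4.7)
The plan is to prove the two inequalities $\tdelta_{Z,G}(V)\le\delta_{Z,G}(V)$ and $\tdelta_{Z,G}(V)\ge\delta_{Z,G}(V)$, and simultaneously to produce the plt-type divisor that achieves the infimum. The easy direction is $\tdelta_{Z,G}(V)\le\delta_{Z,G}(V)$: given any $G$-invariant geometrically irreducible divisor $E$ over $X$ with center containing $Z$, the filtration $\cF_{\ord_E}$ is $G$-invariant, so it suffices to establish the identity $\ord_E(\fa(\cF_{\ord_E}))=S(V;E)$. I would prove this by choosing a basis of each $V_i$ compatible with $\cF_{\ord_E}$, observing that $\ord_E(\fb(\cF^\lambda_{\ord_E}V_i))=\lambda$ at each jumping value, and noting that both quantities in the identity are equal to $\sum_i\frac{a_i}{N_i}\sum_\lambda\lambda\dim\Gr^\lambda_{\ord_E}V_i$. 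Then $\lct_Z(X,\Delta;\fa(\cF_{\ord_E}))\le A_{X,\Delta}(E)/S(V;E)$, and infimizing over $E$ gives the desired inequality.

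For the reverse inequality, I would start from any $G$-invariant filtration $\cF$ and set $\lambda=\lct_Z(X,\Delta;\fa(\cF))$, so that $(X,\Delta+\lambda\cdot\fa(\cF))$ is lc but not klt at the generic point of $Z$. Following the strategy of Lemma~\ref{lem:G-inv center} (Koll\'ar--Shokurov connectedness together with the Galois-plus-$G$ averaging argument, now applied to the $G$-invariant $\bQ$-ideal $\fa(\cF)$ instead of a basis type divisor), I would first locate a $G$-invariant geometrically irreducible minimal non-klt center $W$ through the generic point of $Z$. Next, I would tie-break by adding a small fraction of a general $G$-invariant effective $\bQ$-divisor $H$ drawn from a $G$-invariant complete linear series, arranging that the perturbed pair has $W$ as its unique minimal non-klt center and admits a unique lc place $E$ centered on $W$. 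By uniqueness, $E$ inherits the $G$-action and is geometrically irreducible, and I would invoke \cite{BCHM}*{Corollary 1.4.3} (as in Remark~\ref{rem:dreamy}) to extract $E$ via a proper birational morphism $\pi\colon Y\to X$ with unique exceptional divisor, so that $(Y,\Delta_Y+E)$ is plt at the generic point of $Z$; this is precisely the plt-type condition.

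To conclude, I would verify $A_{X,\Delta}(E)/S(V;E)\le\lambda$: since $E$ is an lc place of $(X,\Delta+\lambda\cdot\fa(\cF))$ we have $A_{X,\Delta}(E)=\lambda\cdot\ord_E(\fa(\cF))$, while the bound $\ord_E(\fa(\cF))\le S(V;E)$ is obtained by choosing a basis of each $V_i$ compatible with $\cF$ and noting that $\ord_E(\fb(\cF^\lambda V_i))\le\ord_E(s_j)$ for any basis element $s_j$ with $\ord_\cF(s_j)\ge\lambda$, which aggregates to $\ord_E(\fa(\cF))\le\ord_E(D)\le S(V;E)$ for the corresponding basis type divisor $D$ of $V$. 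Taking the infimum over $\cF$ gives $\delta_{Z,G}(V)\le\tdelta_{Z,G}(V)$. Applying the construction to a filtration that achieves $\tdelta_{Z,G}(V)$---which is a minimum by a constructibility/boundedness argument, since a filtration is determined by a flag in the finite-dimensional space $V=\bigoplus V_i$---yields the desired plt-type $G$-invariant divisor attaining the infimum in~\eqref{eq:G-delta defn via divisor}.

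The main technical obstacle will be executing the Galois-plus-$G$-invariance argument in the more flexible setting of $\bQ$-ideals arising from filtrations, rather than from a single divisor as in Lemma~\ref{lem:G-inv center}: in particular, ensuring that the tie-breaking yields a \emph{geometrically irreducible} $G$-invariant lc place rather than a Galois orbit of prime divisors requires care in the choice of both the auxiliary divisor $H$ and the minimal non-klt center $W$. A secondary subtlety is the attainability of $\tdelta_{Z,G}(V)$ as a minimum, which should ultimately reduce to boundedness of the space of filtrations modulo the equivalence that preserves $\fa(\cF)$.
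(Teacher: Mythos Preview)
Your overall architecture matches the paper's: prove $\tdelta_{Z,G}(V)\le\delta_{Z,G}(V)$ via the identity $\ord_E(\fa(\cF_{\ord_E}))=S(V;E)$, then pick a $G$-invariant filtration $\cF$ achieving the infimum (using that filtrations are parametrized by a flag variety and that lct is constructible), produce a $G$-invariant plt-type divisor $E$ that computes $\lct_Z(X,\Delta;\fa(\cF))$, and finish with the inequality $\ord_E(\fa(\cF))\le S(V;E)$. Your computations for the easy direction and for the final inequality are correct.

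The genuine gap is in the construction of $E$. First, the appeal to the strategy of Lemma~\ref{lem:G-inv center} to find a $G$-invariant geometrically irreducible minimal non-klt center is both unnecessary and unavailable: that lemma uses Koll\'ar--Shokurov connectedness over a point, which requires the ampleness of $-(K_X+\Delta+\delta c_1(V))$, an assumption not present in the setting of this lemma. In fact no such argument is needed: after shrinking $X$ so that every lc center of $(X,\Delta+\lambda\cdot\fa(\cF))$ contains $Z$, the minimal lc center $W$ is unique, hence automatically $G$-invariant and defined over $k$.

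More seriously, your proposed tie-breaking does not produce a unique lc place. If $H$ is a \emph{general} member of a $G$-invariant linear series, then $H$ is not $G$-invariant, so the perturbed pair is not $G$-invariant and its unique lc place (if it has one) need not be $G$-invariant. If instead $H$ is itself $G$-invariant (e.g.\ obtained by averaging), it is no longer generic and there is no mechanism to separate the finitely many lc places with center $W$. The paper handles this in a separate Lemma~\ref{lem:G-kollar-comp}: one passes to a $G$-equivariant log resolution $\pi\colon Y\to X$ arranged so that each exceptional component is disjoint from its $G$-translates, and one tie-breaks with the ideal $\fb=\pi_*\cO_Y(mA)$ where $A$ is a $\pi$-ample exceptional divisor whose coefficients are perturbed $G$-equivariantly. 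This singles out a unique $G$-orbit $G\cdot E_i$ of lc places; Koll\'ar--Shokurov connectedness applied to the birational morphism $\pi$ (not to a map to a point, so no global ampleness is needed) then forces $G\cdot E_i$ to be geometrically irreducible, and BCHM extracts it as a plt-type divisor. This is exactly the ``care'' you flag as the main obstacle, and it is not achievable with your proposed $H$.
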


Recall that given a klt pair $(X,\Delta)$, a divisor $E$ over $X$ is said to be of plt type at some $x\in C_X(E)$ if there exists an open neighbourhood $U\subseteq X$ of $x$ and a proper birational morphism $\phi\colon Y\to U$ such that $E$ is a geometrically irreducible divisor on $Y$, $(Y,\Delta_Y+E)$ is plt (where $\Delta_Y$ is the strict transform of $\Delta$) and $-E$ is $\phi$-ample. The map $\phi\colon Y\to U$ is called the associated plt blowup.

\begin{proof}
Let $E$ be a $G$-invariant geometrically irreducible divisor over $X$ whose center contains $Z$. Such divisor always exists by the following Lemma \ref{lem:G-kollar-comp} (e.g. consider divisors that computes $\lct_Z(X,\Delta;\cI_Z)$ where $\cI_Z$ denotes the ideal sheaf of $Z$). Then it induces a $G$-invariant filtration $\cF=\cF_{\ord_E}$ on $V$. It is not hard to see from the definition that  $S(V;E)=\ord_E(\fa(\cF))$, thus
\[
\tdelta_{Z,G}(V)\le \lct_Z(X,\Delta;\fa(\cF))\le  \frac{A_{X,\Delta}(E)}{\ord_E(\fa(\cF))}=\frac{A_{X,\Delta}(E)}{S(V;E)}.
\]
Taking the infimum over all such $E$ we obtain 
$\delta_{Z,G}(V)\ge \tdelta_{Z,G}(V)$. It remains to prove the reverse inequality. To this end, let $\cF$ be a $G$-invariant filtrations on $V$ that achieves the infimum in \eqref{eq:G-delta defn via filtration}; this is possible since such filtrations are parametrized by a closed subset of a flag variety and lct is constructible in family. If $Z$ is not contained in the co-support of $\fa(\cF)$ then $\tdelta_{Z,G}(V)=\lct(X,\Delta;\fa(\cF))=+\infty$ and there is nothing to prove as clearly $\tdelta_{Z,G}(V)\ge \delta_{Z,G}(V)$. Thus we may assume that $Z\subseteq \Cosupp(\fa(\cF))$.
By the following Lemma \ref{lem:G-kollar-comp}, $\lct_Z(X,\Delta;\fa(\cF))$ is computed by some $G$-invariant divisor $E$ over $X$ that's of plt type at the generic point of $Z$. 
Let $m\gg 0$, let $D_1,\cdots,D_m$ be general basis type $\bQ$-divisors of $V$ that are compatible with $\cF$, and let $D=\frac{1}{m}(D_1+\cdots+D_m)$. Then $\lct(X,\Delta;\fa(\cF))=\lct_Z(X,\Delta;D)$ and we have $S(V;E)\ge \ord_E(D)$ since $S(V;E)\ge \ord_E(D_i)$ for each $i=1,\cdots,m$. It follows that
\[
\tdelta_{Z,G}(V)=\lct(X,\Delta;\fa(\cF))=\lct_Z(X,\Delta;D)=\frac{A_{X,\Delta}(E)}{\ord_E (D)}\ge \frac{A_{X,\Delta}(E)}{S(V;E)} \ge \delta_{Z,G}(V).
\]
Combining with the inequality in the opposite direction we finish the proof.
\end{proof}

The following result is used in the above proof.

\begin{lem} \label{lem:G-kollar-comp}
Let $G$ be an algebraic group and let $(X,\Delta)$ be a klt pair endowed with a $G$-action. Let $\fa\subseteq \cO_X$ be a $G$-invariant $\bQ$-ideal and let $Z\subseteq \Cosupp(\fa)$ be a $G$-invariant geometrically irreducible subvariety. Then $\lct_Z(X,\Delta;\fa)$ is computed by some $G$-invariant divisor $E$ over $X$ that's of plt type at the generic point of $Z$.
\end{lem}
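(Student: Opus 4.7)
The plan is to mimic the classical non-equivariant construction of a plt-type lc place computing the log canonical threshold (via log resolution, tie-breaking, and BCHM, as in Remark~\ref{rem:dreamy}), and to perform each step $G$-equivariantly. First I set $c := \lct_Z(X,\Delta;\fa)$. Since the conclusion is local at the generic point of $Z$ and $Z$ is $G$-invariant, I may shrink $X$ to a $G$-invariant quasi-projective affine open neighborhood of the generic point of $Z$, on which I may further assume $(X,\Delta+c\fa)$ is lc. I then choose a $G$-equivariant log resolution $\mu\colon Y \to X$ of $(X,\Delta+c\fa)$, available in characteristic zero by the equivariant form of Hironaka's theorem. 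The lc places of $(X,\Delta+c\fa)$ whose center contains $Z$ are a finite collection of prime divisors $\{E_1,\ldots,E_s\}$ on $Y$, and this set is $G$-invariant as a set.

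Next I perform $G$-equivariant tie-breaking. Using that $G$ acts on the quasi-projective variety $X$, I produce a sufficiently general $G$-invariant effective $\bQ$-divisor $H$ on $X$ (either by $G$-averaging a generic non-equivariant tie-breaker or by applying $G$-equivariant Bertini to a $G$-linearized very ample line bundle after possibly shrinking). For appropriate small positive constants $\epsilon,\epsilon'$ the perturbed pair $(X,\Delta+(c-\epsilon)\fa+\epsilon' H)$ has the same lct $1$ along some lc place centered over $Z$, and its surviving lc places (with center containing the generic point of $Z$) are reduced to a single $G$-orbit. I then run the $G$-equivariant relative MMP over $X$ on $Y$ using the equivariant version of BCHM, which terminates at a plt model $\phi\colon Y' \to X$ whose exceptional divisors are exactly this $G$-orbit. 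If the orbit has size one, the resulting divisor $E$ is the desired $G$-invariant geometrically irreducible plt-type lc place computing $\lct_Z(X,\Delta;\fa)$.

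The main obstacle will be the case where the tie-broken $G$-orbit still has size $s>1$, since then $Y'$ carries several exceptional divisors permuted by $G$ rather than a single $G$-invariant one. My plan here is to blow up the $G$-invariant scheme-theoretic intersection of the divisors in the orbit on $Y'$: because all divisors in the orbit share the same log discrepancy and the same order along $\fa$ (by $G$-invariance of the data), the symmetric weighted blowup corresponds to a $G$-invariant quasi-monomial valuation that still computes $\lct_Z(X,\Delta;\fa)$. A further $G$-equivariant MMP with scaling is then needed to realize this valuation as a single $G$-invariant divisorial plt-type extraction. Producing a divisorial (rather than merely quasi-monomial) plt-type valuation in this $G$-invariant symmetric situation—especially when $G$ is non-reductive and $G$-linearized sections may be scarce—is the delicate part, and may need to be handled by choosing rational weights and invoking the boundedness of plt-type lc places from BCHM. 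Alternatively, one may hope that the connectedness theorem of Koll\'ar–Shokurov (already used in Lemmas~\ref{lem:intersection of two non-klt center} and \ref{lem:intersection of two delta center}) can be applied to force $s=1$ directly after tie-breaking, bypassing the weighted blowup entirely.
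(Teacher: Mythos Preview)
Your outline has the right shape, but the step you flag as the ``main obstacle'' is exactly where the paper's proof supplies an idea you are missing. The paper does \emph{not} attempt your first alternative (symmetric weighted blowup of the $G$-orbit followed by another MMP to extract a divisorial valuation); it uses your second alternative (Koll\'ar--Shokurov connectedness to force $s=1$), but this only works after a preparatory step that you have not carried out.

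The key point is to choose the $G$-equivariant log resolution $\pi\colon Y\to X$ so that every irreducible component of $\Ex(\pi)\cup\Supp(\pi^*\fa)$ is smooth and \emph{disjoint from all of its $G$-translates}. This is arranged by repeatedly blowing up strata of the SNC divisor; since strata are $G$-permuted, the blowups are $G$-equivariant, and after finitely many steps distinct $G$-translates no longer meet. Once the tie-breaking has cut the lc places down to a single $G$-orbit $G\cdot E_i$, the components of this orbit over $\bar k$ are then pairwise disjoint by construction. Koll\'ar--Shokurov connectedness (applied to the crepant pullback on $Y$ over $X$) immediately forces $G\cdot E_i$ to be geometrically irreducible, i.e.\ $s=1$. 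The BCHM extraction then gives the plt blowup with a single $G$-invariant exceptional divisor.

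A secondary issue: your tie-breaking step is too vague. ``$G$-averaging a generic non-equivariant tie-breaker'' will not in general isolate a single $G$-orbit, and ``$G$-equivariant Bertini'' does not obviously produce enough freedom when $G$ is non-reductive. The paper instead tie-breaks in two stages with explicitly $G$-invariant ideals: first with the ideal sheaf $\cI$ of the minimal lc center $W\supseteq Z$ (to make $W$ the unique lc center), and then with $\fb=\pi_*\cO_Y(mA)$ where $A$ is a $G$-invariant $\pi$-ample exceptional divisor (whose existence is part of the resolution setup). Perturbing the coefficients of $A$ within a $G$-orbit of exceptional components gives enough freedom to single out one $G$-orbit of lc places, and $\fb$ is automatically $G$-invariant.
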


\begin{proof}
This is quite standard and should be well known to experts (c.f. \cite{HX-rc-degeneration}*{Proof of Theorem 1.3}). We provide a proof here for reader's convenience. 
For ease of notation we assume that $\Delta=0$; the proof of the general case is the same. Up to shrinking $X$ and replacing $\fa$ by $\fa^\lambda$ (for some $\lambda\in\bQ_+$) we may also assume that $\lct_Z(X;\fa)=\lct(X;\fa)=1$ and all lc centers of $(X,\fa)$ contain $Z$. Let $W$ be the (unique) minimal lc center of $(X,\fa)$ containing $Z$ (see \cite{Kol-mmp}*{Corollary 4.41}), which is necessarily defined over the base field $k$ and $G$-invariant. Let $\cI$ be the ideal sheaf of $W$ and let $\pi\colon Y\to X$ be a common $G$-equivariant log resolution of $(X,\fa)$ and $(X,\cI)$ such that
\begin{enumerate}
    \item the exceptional locus $\Ex(\pi)\subseteq Y$ supports a $G$-invariant $\pi$-ample divisor $A$ (note that $-A$ is effective by the negativity lemma \cite{KM98}*{Lemma 3.39}),
    \item every irreducible component of $\Ex(\pi)\cup \Supp(\pi^*\fa)$ is smooth (i.e. it is a log resolution over $k$) and disjoint from its $G$-translates (these can be achieved by further blowing up strata of $\Ex(\pi)\cup \Supp(\pi^*\fa)$).
\end{enumerate}
Let $0<\epsilon\ll 1$ and write
\begin{equation} \label{eq:K+(1-epsilon)D}
    K_Y+\sum_{i=1}^p a_i(\epsilon) E_i+ \sum_{j=1}^q b_j(\epsilon) F_j +\sum_{k=1}^r c_k(\epsilon) G_k=\pi^*(K_X+\fa^{1-\epsilon})
\end{equation}
where $E_1,\cdots,E_p$ are prime divisors on $Y$ with center $W$ such that $A_{X}(E_i)=\ord_{E_i}(\fa)$, $F_1,\cdots,F_q$ are divisors with center $W$ such that $A_{X}(E_i)>\ord_{E_i}(\fa)$ and $G_1,\cdots,G_r$ are divisors on $Y$ whose centers are different from $W$ (but may contain $W$). We then have $\pi^*\cI=\cO_Y(-\sum_i a'_i E_i-\sum_j b'_j F_j)$ for some $a'_i,b'_j>0$. Note that $\lim_{\epsilon\to 0} a_i(\epsilon)=1>\lim_{\epsilon\to 0} b_j(\epsilon)$ ($\forall i,j$) by construction, thus we have 
\[
\lct(X,\fa^{1-\epsilon};\cI)=\min_{1\le i\le p}\frac{1-a_i(\epsilon)}{a'_i}<\frac{1-b_i(\epsilon)}{b'_j} \;(\forall j)
\]
when $\epsilon$ is sufficiently small. It follows that for such $\epsilon$, if we set $\lambda=\lct(X,\fa^{1-\epsilon};\cI)$, then $(X,\fa^{1-\epsilon}\cdot\cI^\lambda)$ is lc with a unique lc center $W$ and every lc place of $(X,\fa^{1-\epsilon}\cdot\cI^\lambda)$ is also a lc place of $(X,\fa)$. Replacing $\fa$ by $\fa^{1-\epsilon}\cdot\cI^\lambda$, we may assume that $(X,\fa)$ is lc with a unique lc center $W$. In particular, we have $\lim_{\epsilon\to 0} c_k(\epsilon)<1$ for all $k$ in \eqref{eq:K+(1-epsilon)D}. Let $m$ be a sufficiently large and divisible integer and let $\fb=\pi_*\cO_Y(mA)$.
Then $\ord_E(\fb)=\ord_E(-mA)$ for any divisor $E$ over $X$ and as before we see that $\lct(X,\fa^{1-\epsilon};\fb)$ is computed by some divisor $E_i$ when $0<\epsilon\ll 1$. By perturbing the coefficients of $A$ in a $G$-equivariant manner, we can arrange that $\lct(X,\fa^{1-\epsilon};\fb)$ is computed by a unique $G$-orbit $G\cdot E_i$. Replacing $\fa$ by $\fa^{1-\epsilon}\cdot \fb^{\lct(X,\fa^{1-\epsilon};\fb)}$, we may further assume that $(X,\fa)$ is lc whose only lc places are the irreducible components of $G\cdot E_i$. By construction, the irreducible components of $G\cdot E_i$ over $\bar{k}$ are disjoint from each other, thus by the Koll\'ar-Shokurov's connectedness theorem \cite{K+-flip}*{Theorem 17.4}, $G\cdot E_i$ is geometrically irreducible. In other words, $(X,\fa)$ has a unique lc place $E=G\cdot E_i$. By \cite{BCHM}*{Corollary 1.4.3}, there exists a proper birational morphism $\phi\colon \tX\to X$ that extracts $E$ as the unique exceptional prime divisor and $-E$ is $\phi$-ample (such $\tX$ is uniquely determined by $E$, hence is defined over the same base field $k$). Since $E$ is the unique lc place of $(X,\fa)$, we have $\phi^*(K_X+\fa)\ge K_{\tX}+E$ and $E$ is also the unique lc place of $(\tX,E)$. Hence $(\tX,E)$ is plt. In other words, $E$ is of plt type over $X$. Since $E$ is $G$-invariant by construction, we are done.
\end{proof}

We now come to the key inductive step in our proof of Theorem \ref{thm:G-delta=delta for boundary}. Using Lemma \ref{lem:G-filt vs G-div for delta}, we may choose a $G$-invariant divisor $E$ of plt type that computes the $G$-equivariant $\delta$-invariant. Using inversion of adjunction and techniques from \cite{AZ-K-adjunction}, we next compare the $G$-equivariant $\delta$-invariant of $V$ with its `filtered restriction' to $E$ (to be defined in the proof below). Roughly speaking, the consequence is that the filtered restriction of $V$ to $E$ is $G$-equivariantly K-semistable; since $E$ has smaller dimension, we can use our inductive hypothesis to conclude that the filtered restriction is indeed geometrically K-semistable. Using inversion of adjunction and \cite{AZ-K-adjunction} again but without the equivariant information this time, we conclude that the geometric $\delta$-invariant of the origin linear series $V$ is also computed by $E$. These observations lead to the following statement.

\begin{lem} \label{lem:inductive step}
Assume Theorem \ref{thm:G-delta=delta for boundary} for pairs of dimension $n-1$. Let $G$ be an algebraic group, let $(X,\Delta)$ be a klt pair of dimension $n$ with a $G$-action, let $V$ be a $G$-invariant boundary on $X$ and let $Z\subseteq X$ be a $G$-invariant geometrically irreducible subvariety. Assume that $\delta_{\Zkb}(\Vkb)<+\infty$. Then it is computed by some $G$-invariant divisor $E$ over $X$ that's of plt type at the generic point of $Z$, i.e.
\[
\delta_{\Zkb}(\Vkb)=\frac{A_{X,\Delta}(E)}{S(V;E)}.
\]
\end{lem}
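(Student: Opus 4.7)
The plan is to invoke Lemma \ref{lem:G-filt vs G-div for delta} to produce a candidate $G$-invariant divisor $E$ of plt type realizing the $G$-equivariant version of the $\delta$-invariant, and then to use Abban-Zhuang style adjunction together with the inductive hypothesis on $(E,\Delta_E)$ to promote $E$ into a valuation that also computes the geometric $\delta_{\Zkb}(\Vkb)$.

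First I would apply Lemma \ref{lem:G-filt vs G-div for delta} to obtain a $G$-invariant geometrically irreducible divisor $E$ over $X$, of plt type at the generic point of $Z$, satisfying
\[
\frac{A_{X,\Delta}(E)}{S(V;E)}=\delta_{Z,G}(V)\ge \delta_{\Zkb}(\Vkb),
\]
the last inequality being tautological since the equivariant infimum is taken over a smaller class of divisors. The task is thus to prove the reverse inequality $\delta_{Z,G}(V)\le \delta_{\Zkb}(\Vkb)$. Let $\phi\colon \tY\to X$ be the associated plt blowup and let $(E,\Delta_E)$ be the klt pair obtained via adjunction from $(\tY,\phi^{-1}_*\Delta+E)$; it inherits a $G$-action since $E$ is $G$-invariant.

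Next, following the refinement construction of \cite{AZ-K-adjunction}, the filtration $\cF_E$ on $V$ yields a $G$-invariant refined boundary $W$ on $E$. The central ingredient I would use is the adjunction-type identity
\[
\delta_{Z'}(X,\Delta;V)=\min\Bigl\{\frac{A_{X,\Delta}(E)}{S(V;E)},\ \inf_{Z''\subseteq E,\,\phi(Z'')=Z'}\delta_{Z''}(E,\Delta_E;W)\Bigr\},
\]
together with its $G$-equivariant analogue obtained by restricting all filtrations and divisors under consideration to the $G$-invariant ones. Applied with $Z'=Z$, the equivariant version combined with the fact that $E$ computes $\delta_{Z,G}(V)$ yields
\[
\frac{A_{X,\Delta}(E)}{S(V;E)}\le \delta_{(Z\cap E),G}(E,\Delta_E;W).
\]

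Finally I would invoke the inductive hypothesis: Theorem \ref{thm:G-delta=delta for boundary} applied to the $(n-1)$-dimensional pair $(E,\Delta_E)$ with boundary $W$ over the $G$-invariant center $Z\cap E$ gives $\delta_{(Z\cap E)_{\kbar}}(W_{\kbar})=\delta_{(Z\cap E),G}(W)$. Substituting this into the geometric form of the adjunction formula then yields
\[
\delta_{\Zkb}(\Vkb)=\min\Bigl\{\frac{A_{X,\Delta}(E)}{S(V;E)},\ \delta_{(Z\cap E)_{\kbar}}(W_{\kbar})\Bigr\}=\frac{A_{X,\Delta}(E)}{S(V;E)}=\delta_{Z,G}(V),
\]
so $E$ also computes the geometric $\delta$-invariant, as required. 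The main obstacle I expect is handling the refinement cleanly: the refined object $W$ is naturally a filtered boundary rather than a plain one, so applying Theorem \ref{thm:G-delta=delta for boundary} in dimension $n-1$ requires either truncating $W$ to a genuine boundary that still captures the relevant $\delta$-invariant, or adapting the inductive statement to the filtered setting. A secondary technical point is verifying the ampleness hypothesis of Theorem \ref{thm:G-delta=delta for boundary} on $E$, which may require a small perturbation of $W$ that does not disturb the minimizing valuation.
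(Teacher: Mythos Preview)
Your overall strategy matches the paper's: produce $E$ via Lemma \ref{lem:G-filt vs G-div for delta}, pass to the refinement $W$ on $E$, and use the inductive hypothesis on $(E,\Delta_E)$ together with Abban--Zhuang adjunction. Two points, however, are treated too casually and deserve comment.

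First, the ampleness hypothesis in Theorem \ref{thm:G-delta=delta for boundary} cannot be handled by perturbation. On $E$ one has $K_E+\Delta_E+\delta\cdot c_1(W)\sim_{\pi,\bQ}0$, so $-(K_E+\Delta_E+\delta_0\cdot c_1(W))$ is $\pi$-ample precisely when $\delta_0<\delta$, where $\delta_0=\delta(W_{\kbar}/\Zkb)$. Thus you can only invoke the inductive hypothesis under the assumption $\delta_0<\delta$, which is the opposite of what you want. The paper resolves this by arguing by contradiction: suppose $\delta_0<\delta$; then ampleness holds, Theorem \ref{thm:G-delta=delta for boundary} in dimension $n-1$ produces a $G$-invariant divisor $F$ over $E$ with $A_{E,\Delta_E}(F)/S(W;F)=\delta_0<\delta$, contradicting the equivariant lower bound. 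No perturbation is needed (or would help).

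Second, the ``$G$-equivariant analogue'' of the adjunction identity that you invoke is not available off the shelf; the inequality $\delta_{Z,G}(V)\le A_{E,\Delta_E}(F)/S(W;F)$ for every $G$-invariant $F$ over $E$ is precisely what must be proved. The paper does this by hand: given such an $F$, lift the filtration $\cF_F$ on $W$ to a $G$-invariant refinement $\cF$ of $\cF_E$ on $V$, take a general convex combination $D$ of basis type divisors compatible with $\cF$, and compute $\lct_Z(X,\Delta;D)$ via inversion of adjunction. This is the content of the displayed chain of equalities leading to \eqref{eq:G-inv A/S>=outside delta}. Once that inequality is established and the contradiction argument above is in place, the geometric adjunction inequality from \cite{AZ-K-adjunction} finishes the proof exactly as you indicate. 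Your concern about $W$ being a ``filtered boundary'' is not an issue: the paper defines $W$ as a genuine boundary (a finite $\bQ$-linear combination of linear series on $E$), so Theorem \ref{thm:G-delta=delta for boundary} applies to it directly.
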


\begin{proof}
Since $\delta_{\Zkb}(\Vkb)<+\infty$, there exists some basis type $\bQ$-divisor of $\Vkb$ whose support contains $Z$. Thus if we let $\cF_Z$ be the $G$-invariant filtration of $V$ induced by $\mult_Z$, then $Z\subseteq \Cosupp(\fa(\cF_Z))$ and hence $\tdelta_{Z,G}(V)<+\infty$. We may assume that $Z$ is not a divisor in $X$, otherwise we can clearly take $E=Z$. By Lemma \ref{lem:G-filt vs G-div for delta}, we have
\[
\delta:=\delta_{Z,G}(V)=\tdelta_{Z,G}(V)=\frac{A_{X,\Delta}(E)}{S(V;E)}
\]
for some $G$-invariant divisor $E$ over $X$ with $Z\subseteq C_X(E)$ that's of plt type at the generic point of $Z$. We will show that $\delta_{\Zkb}(\Vkb)$ is also computed by this divisor $E$. Since clearly $\delta_{\Zkb}(\Vkb)\le \delta$, it suffices to prove the reverse inequality.

To this end, let $\cF_E$ be the ($G$-invariant) filtration on $V$ induced by $E$. Replacing $X$ by a $G$-invariant open subset that intersects $Z$, we may assume that $E$ is of plt type over $X$. Let $\pi\colon Y\to X$ be the associated plt blowup. Note that $E$ is exceptional over $X$. We define a boundary $W$ on $E\subseteq Y$ (the `filtered restriction' of $V$ to $E$) as follows: if $V_i\subseteq H^0(X,D_i)$ is a linear series on $X$, its filtered restriction is set to be 
\[
W_i:=\sum_{\lambda\in\bQ} \frac{\dim \Gr^\lambda_{\cF_E} V_i}{\dim V_i}\cdot V_i(-\lambda E)|_E
\]
where $V_i(-\lambda E)$ is the linear series in $H^0(Y,\pi^*D-\lambda E)$ given by $\Gr^\lambda_{\cF_E} V_i$; we then extend the definition to boundaries by taking linear combination. The coefficients in the above definition is chosen such that if $W$ is the filtered restriction of $V$ to $E$ and $D$ is a basis type $\bQ$-divisor of $V$ that's compatible with $E$, then we have $\pi^*D=S(V;E)\cdot E+\Gamma$ for some divisor $\Gamma$ whose support doesn't contain $E$ and $\Gamma|_E$ is a basis type $\bQ$-divisor of $W$; conversely, every basis type $\bQ$-divisor of $W$ can be obtained in this way. In particular, $c_1(W)\sim_\bQ (\pi^*c_1(V)-S(V;E)\cdot E)|_E$ is $\pi$-ample. 

Next, let $F$ be a $G$-invariant geometrically irreducible divisor over $E$ whose center dominates $Z$, let $m\gg 0$, and let $D_0^{(i)}$ ($i=1,\cdots,m$) be general basis type $\bQ$-divisors of $W$ that are compatible with $F$. Let $D_0=\frac{1}{m}(D_0^{(1)}+\cdots+D_0^{(m)})$. By construction, the filtration on $W$ induced by $F$ lifts to a ($G$-invariant) refinement $\cF$ of the filtration $\cF_E$ on $V$, and $D_0$ lifts to a divisor $D$ on $X$ that's a convex linear combination of general basis type $\bQ$-divisors of $V$ that are compatible with $\cF$ (hence are also compatible with $\cF_E$). We have
\[
\delta=\tdelta_{Z,G}(V)=\frac{A_{X,\Delta}(E)}{S(V;E)}=\frac{A_{X,\Delta}(E)}{\ord_E(D)}\ge \lct_Z(X,\Delta;D)=\lct_Z(X,\Delta;\fa(\cF))\ge \tdelta_{Z,G}(V),
\]
thus equality holds everywhere and $E$ computes $\delta=\lct_Z(X,\Delta;D)$. Let $\Delta_E={\rm Diff}_E(\Delta_Y)$ be the different; as $E$ is of plt type over $X$, $(E,\Delta_E)$ is klt and $-(K_E+\Delta_E)$ is $\pi$-ample. Recall that $\pi^*D=S(V;E)\cdot E + \Gamma$ where $\Gamma|_E=D_0$. We thus have $\pi^*(K_X+\Delta+\delta D)=K_Y+\Delta_Y+E+\delta\Gamma$; hence $K_E+\Delta_E+\delta c_1(W)\sim_\bQ (K_Y+\Delta_Y+E+\delta\Gamma)|_E \sim_{\pi,\bQ} 0$ and by inversion of adjunction we deduce that $(E,\Delta_E+\delta D_0)$ is lc over the generic point of $Z$. It follows that
\begin{equation} \label{eq:G-inv A/S>=outside delta}
    \frac{A_{E,\Delta_E}(F)}{S(W;F)}=\frac{A_{E,\Delta_E}(F)}{\ord_F(D_0)}\ge \delta
\end{equation}
for all $G$-invariant geometrically irreducible divisors over $E$ whose centers dominate $Z$. Suppose that 
\[
\delta_0:=\delta(E_{\kbar},(\Delta_E)_{\kbar};W_{\kbar}/\Zkb) < \delta.
\]
Then $-(K_E+\Delta_E+\delta_0 c_1(W))\sim_{\pi,\bQ} (\delta-\delta_0)c_1(W)$ is $\pi$-ample and by Theorem \ref{thm:G-delta=delta for boundary} (noting that $E$ has dimension $n-1$), there exists some $G$-invariant geometrically irreducible divisor $F$ over $E$ whose center dominates $Z$ such that
\[
\frac{A_{E,\Delta_E}(F)}{S(W;F)}=\delta_0<\delta,
\]
which contradicts \eqref{eq:G-inv A/S>=outside delta}. Therefore, we must have $\delta(E_{\kbar},(\Delta_E)_{\kbar};W_{\kbar}/\Zkb)\ge \delta$. But then by \cite{AZ-K-adjunction}*{Proof of (3.4)}, we get 
\[
\delta_{\Zkb}(\Xkb,\Deltakb;\Vkb)\ge \min\left\{\frac{A_{X,\Delta}(E)}{S(V;E)},\delta(E_{\kbar},(\Delta_E)_{\kbar};W_{\kbar}/\Zkb)\right\}\ge \delta
\]
as desired.
\end{proof}

We are now ready to prove

\begin{proof}[Proof of Theorem \ref{thm:G-delta=delta for boundary}]
We prove by induction on $n=\dim X$. The base case $n=0$ is empty. Suppose the statement has been proved in dimension $n-1$. By Lemma \ref{lem:G-inv center}, there exists some $G$-invariant geometrically irreducible subvariety $Z\subseteq X$ dominating $Y$ such that $\delta(\Vkb/\Ykb)=\delta_{\Zkb}(\Vkb)<\infty$. By induction hypothesis and Lemma \ref{lem:inductive step}, there exists some $G$-invariant geometrically irreducible divisor $E$ over $X$ whose center contains $Z$ such that $\delta_{\Zkb}(\Vkb)=\frac{A_{X,\Delta}(E)}{S(V;E)}$. Thus 
\[
\delta(\Vkb/\Ykb)=\frac{A_{X,\Delta}(E)}{S(V;E)},
\]
proving the statement in dimension $n$.
\end{proof}

\begin{proof}[Proof of Theorem \ref{thm:G-delta=delta}]
Let $\epsilon>0$. By \cite{BJ-delta}*{Corollary 3.6}, we have $S_m(v)\le (1+\epsilon)S(v)$ for all $m\gg 0$ and all $v\in\Val^*_X$. Let $m\gg 0$ be also sufficiently divisible such that $\delta_m:=\delta_m(\Xkb,\Deltakb)<1$ and let $V_m=\frac{1}{m}|-m(K_X+\Delta)|$ be the complete $\bQ$-linear series. Then we have $\delta((V_m)_{\kbar})=\delta_m$ by definition and hence $-(K_X+\Delta+\delta((V_m)_{\kbar})c_1(V_m))\sim_\bQ -(1-\delta_m)(K_X+\Delta)$ is ample. By Theorem \ref{thm:G-delta=delta for boundary} (applied to the pair $(X,\Delta)$ over $U={\rm point}$ with boundary $V_m$), we see that there exists some $G$-invariant geometrically irreducible divisor $E$ over $X$ such that
\[
\frac{A_{X,\Delta}(E)}{S_m(E)}=\frac{A_{X,\Delta}(E)}{S(V_m;E)}=\delta_m.
\]
Let $D$ be an $m$-basis type $\bQ$-divisor of $(X,\Delta)$ that's compatible with $E$. Then $E$ computes $\lct(X,\Delta;D)=\delta_m<1$ by the definition of $\delta_m$ and the above equality. It follows that $E$ is an lc place of the complement $\delta_m D+(1-\delta_m)H$ where $H\sim_\bQ -(K_X+\Delta)$ is effective and general. We also have
\[
\delta(\Xkb,\Deltakb)\le \frac{A_{X,\Delta}(E)}{S(E)}\le (1+\epsilon)\frac{A_{X,\Delta}(E)}{S_m(E)}=(1+\epsilon)\delta_m
\]
for $m\gg 0$. As $\epsilon$ is arbitrary and $\lim_{m\to \infty} \delta_m = \delta(\Xkb,\Deltakb)$, the equality \eqref{eq:delta=inf over G-inv div} follows.
\end{proof}

\begin{rem}
Using the argument of \cite{BLX-openness}, one can further show that $\delta(\Xkb,\Deltakb)$ is computed by some $\Aut(X,\Delta)$-invariant quasi-monomial valuation that's an lc place of a bounded complement defined over $k$. We leave the details to the reader.
\end{rem}

As in \cite{LZ-equivariant}, Theorem \ref{thm:G-delta=delta} implies the equivalence of equivariant K-semistability (resp. K-polystability) with geometric K-semistability (resp. K-polystability), as well as a generalization of Tian's criterion.

\begin{cor} \label{cor:equivariant}
Let $G$ be an algebraic group and let $(X,\Delta)$ be a log Fano pair with an action of $G$.
\begin{enumerate}
    \item If $(X,\Delta)$ is $G$-equivariantly K-semistable, then $(\Xkb,\Deltakb)$ is K-semistable.
    \item If $G$ is reductive and $(X,\Delta)$ is $G$-equivariantly K-polystable, then $(\Xkb,\Deltakb)$ is K-polystable.
\end{enumerate}
\end{cor}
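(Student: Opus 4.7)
I would argue by contradiction: assume $(\Xkb,\Deltakb)$ is not K-semistable, so $\delta(\Xkb,\Deltakb) < 1$. Since $G \subseteq \Aut(X,\Delta)$, Theorem \ref{thm:G-delta=delta} applied with the full automorphism group produces an $\Aut(X,\Delta)$-invariant (and therefore $G$-invariant) geometrically irreducible divisor $E$ over $X$ which is an lc place of some complement and satisfies $A_{X,\Delta}(E)/S(E) < 1$. By Remark \ref{rem:dreamy}, being an lc place of complement, $E$ is dreamy. Now apply Proposition \ref{prop:G-valuative criterion} to the $G$-equivariantly K-semistable pair $(X,\Delta)$ and to $E$: this yields $A_{X,\Delta}(E) \ge S(E)$, contradicting the previous inequality. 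Hence $(\Xkb,\Deltakb)$ is K-semistable.

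\textbf{Part (2).} By (1) we already know $(\Xkb,\Deltakb)$ is K-semistable, so assume for contradiction it is not K-polystable. Then there exists a non-product K-semistable special degeneration $(W,\Delta_W)$ of $(\Xkb,\Deltakb)$ with $\Fut=0$ and $W \not\cong \Xkb$. Iterating special degenerations if necessary (and invoking the K-polystable reduction from \cite{LWX-Kps-degeneration}), we may replace $(W,\Delta_W)$ by the unique K-polystable degeneration in its S-equivalence class. Uniqueness of this K-polystable limit up to isomorphism means that $(W,\Delta_W)$ is stable (as an isomorphism class) under both $\Gal(\kbar/k)$ and $G(\kbar)$. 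Standard Galois descent then lets us realize $(W,\Delta_W)$ as a log Fano pair over $k$ carrying a natural $G$-action.

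It remains to produce a $G$-equivariant special test configuration of $(X,\Delta)$ over $k$ with central fiber $(W,\Delta_W)$. Here is where reductivity of $G$ enters: the set of all special test configurations degenerating $(\Xkb,\Deltakb)$ to (something isomorphic to) $(W,\Delta_W)$ carries a natural action of $G \times \Gal(\kbar/k)$; when $G$ is reductive one can average, or equivalently select a $G$-fixed point in the appropriate Hilbert/Chow scheme of embeddings, to obtain a test configuration on which both $G$ and $\Gal(\kbar/k)$ act compatibly. This produces a non-trivial $G$-equivariant special test configuration of $(X,\Delta)$ with $\Fut=0$ whose central fiber is K-semistable and not isomorphic to $(X,\Delta)$, contradicting Theorem \ref{thm:test G-Kps by stc}.

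\textbf{Main obstacle.} Part (1) is essentially a direct combination of Theorem \ref{thm:G-delta=delta} and Proposition \ref{prop:G-valuative criterion}. The real difficulty lies in part (2): although uniqueness of the K-polystable limit (and hence its invariance up to isomorphism under $\Gal(\kbar/k) \times G(\kbar)$) is the conceptual core, turning this formal invariance into an \emph{honest} $G$-equivariant test configuration defined over $k$ is the step that requires reductivity of $G$ — without it, the $G$-action on the space of degenerations need not admit a fixed point and the non-reductive counterexample of Example \ref{ex:non-reductive} becomes a real obstruction.
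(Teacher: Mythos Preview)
Your Part (1) is correct and matches the paper's argument exactly.

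For Part (2), your outline is the same as the paper's --- uniqueness of the K-polystable limit forces invariance of its isomorphism class under $\Gal(\kbar/k)\times G(\kbar)$, and one must then manufacture a $G$-equivariant test configuration over $k$ --- but two of your steps are not justified as written.

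First, ``standard Galois descent'' does not obviously realize $(W,\Delta_W)$ over $k$: invariance of the isomorphism class under Galois furnishes no cocycle, hence no descent datum. The paper never attempts to descend the variety. Instead it works inside the Hilbert scheme: it shows that the ${\rm PGL}_{N+1}(\kbar)$-orbit $W_0$ of $[(X_0,\Delta_0)]$ is closed in $\overline{{\rm PGL}_{N+1}(\kbar)\cdot[(X,\Delta)]}$ (this uses K-polystability together with \cite{BX-uniqueness}) and is defined over $k$ (this is where uniqueness of the K-polystable limit is used --- every Galois conjugate of $(X_0,\Delta_0)$ lands back in $W_0$).

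Second, ``averaging'' or ``selecting a $G$-fixed point in the space of test configurations'' is not the correct mechanism, and there is no reason that space is proper or admits $G$-fixed points. The precise tool is Kempf's instability theorem \cite{Kempf}*{Corollary 4.5}: since $x=[(X,\Delta)]$ is a $k$-rational point fixed by the reductive subgroup $G\subseteq {\rm PGL}_{N+1}$ and $W_0$ is a closed $G$-invariant subset of the orbit closure, there exists a one-parameter subgroup $\rho\colon\bG_m\to Z_{{\rm PGL}_{N+1}}(G)$, defined over $k$, with $\lim_{t\to 0}\rho(t)\cdot x\in W_0$. That $\rho$ lands in the centralizer of $G$ is exactly what makes the resulting test configuration $G$-equivariant, and that it is defined over $k$ is part of Kempf's rationality statement. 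This is where reductivity is actually consumed, and your proposal does not identify the tool.
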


The following example shows that the reductivity of $G$ is necessary.

\begin{expl} \label{ex:non-reductive}
Let $k=\bC$ and let $X^{\ra}$ be the unique Fano threefold of degree $22$ whose identity component $\Aut(X^{\ra})_0$ of the automorphism group is isomorphic to the additive group $\bC^+$ (\cite{Pro-V22}). As discussed in \cite{CS-KE-V22}*{Example 1.4}, $X^{\ra}$ is K-semistable but not K-polystable and its K-polystable degeneration is the Mukai-Umemura threefold $X^{\rm MU}$. Moreover, $X^{\rm MU}$ is the only nontrivial K-semistable special degeneration of $X^{\ra}$. To see this, let $Y$ be a nontrivial K-semistable special degeneration of $X^{\ra}$. Then it has a faithful $\bC^*$-action. If $Y$ is not isomorphic to $X^{\rm MU}$, then by \cite{LWX-Kps-degeneration}*{Theorems 1.4 and 3.2}, $Y$ admits a nontrivial $\bC^*$-equivariant special degeneration to $X^{\rm MU}$, giving rise to a faithful $(\bC^*)^2$-action on $X^{\rm MU}$, which is impossible as $\Aut(X^{\rm MU})\cong {\rm PGL}(2,\bC)$. Thus $Y\cong X^{\rm MU}$. However, none of the special degenerations of $X^{\ra}$ to $X^{\rm MU}$ is $\bC^+$-equivariant: otherwise we get a faithful $\bC^+\times\bC^*$-action on $X^{\rm MU}$, which is again impossible. It follows that $X^{\ra}$ is $\bC^+$-equivariantly K-polystable but not K-polystable by Theorem \ref{thm:test G-Kps by stc}.
\end{expl}

\begin{proof}[Proof of Corollary \ref{cor:equivariant}]
We first prove the K-semistable part. Suppose that $(\Xkb,\Deltakb)$ is not K-semistable, i.e. $\delta(\Xkb,\Deltakb)<1$. Then by Theorem \ref{thm:G-delta=delta} and Remark \ref{rem:dreamy}, there exists some $G$-invariant geometrically irreducible dreamy divisor $E$ over $X$ such that $A_{X,\Delta}(E)<S(E)$, which implies that $(X,\Delta)$ is not $G$-equivariantly K-semistable by Proposition \ref{prop:G-valuative criterion}, a contradiction. Thus $(\Xkb,\Deltakb)$ is K-semistable.

Assume next that $G$ is reductive and $(X,\Delta)$ is $G$-equivariantly K-polystable. Then from the previous part we know that $(\Xkb,\Deltakb)$ is K-semistable. Let $(X_0,\Delta_0)$ be its unique K-polystable degeneration \cite{LWX-Kps-degeneration}*{Theorem 1.3} (a priori it is only defined over $\kbar$). Let 
\[
W:=\overline{{\rm PGL}_{N+1}(\kbar)\cdot [(X,\Delta)]}\subseteq \left({\rm Hilb}(\bP^N)\times {\rm Chow}(\bP^N)\right)\cap \cW^{\rm Kss}
\]
and $W_0:={\rm PGL}_{N+1}(\kbar)\cdot [(X_0,\Delta_0)]\subseteq W$ be the corresponding locus (with reduced scheme structure) in the moduli of K-semistable log Fano pairs (c.f. \cite{XZ-CM-positive}*{Proof of Theorem 2.21}). Then $W_0$ is closed in $W$: otherwise there exists some K-semistable log Fano pair $(X_1,\Delta_1)$ such that $W_1\subsetneq \overline{W_0}\cap W$ where $W_1:={\rm PGL}_{N+1}(\kbar)\cdot [(X_1,\Delta_1)]$; in particular, $\dim W_1<\dim W_0$ and $(X_0,\Delta_0)$ has an isotrivial degeneration to $(X_1,\Delta_1)$; since $(X_0,\Delta_0)$ is K-polystable, we deduce that $(X_1,\Delta_1)$ also specially degenerates to $(X_0,\Delta_0)$ by \cite{BX-uniqueness}*{Theorem 1.1(1) and Remark 1.2(2)}, thus $W_0\subseteq \overline{W_1}$ and $\dim W_0\le \dim W_1$, a contradiction. We also have $W_0$ is defined over $k$ since all Galois conjugates of $(X_0,\Delta_0)$ are also K-polystable degenerations of $(\Xkb,\Deltakb)$, hence are isomorphic to $(X_0,\Delta_0)$ by the uniqueness of K-polystable degeneration. Since $x:=[(X,\Delta)]\in W$ is a $k$-rational point that's fixed by the reductive group $G$, by \cite{Kempf}*{Corollary 4.5} we see that there exists a 1-parameter subgroup $\rho\colon \bG_m\to Z_G({\rm PGL}_{N+1})$ (defined over $k$) such that $\lim_{t\to 0} \rho(t)\cdot x\in W_0$. In other words, there exists a $G$-equivariant special test configuration of $(X,\Delta)$ defined over $k$ whose central fiber is isomorphic to $(X_0,\Delta_0)$ over the algebraic closure $\kbar$; in particular, the central fiber is geometrically K-semistable. Since $(X,\Delta)$ is $G$-equivariantly K-polystable, we have $(\Xkb,\Deltakb)\cong (X_0,\Delta_0)$ by Theorem \ref{thm:test G-Kps by stc} and hence $(\Xkb,\Deltakb)$ is K-polystable.
\end{proof}

\begin{cor}[\cite{LZ-equivariant}*{Theorem 1.2}] \label{cor:finite Galois}
Let $\pi\colon (X,D)\to (Y,B)$ be a finite surjective Galois morphism between log Fano pairs such that $K_X+D=\pi^*(K_Y+B)$. Then
\begin{enumerate}
    \item $(X,D)$ is K-semistable $($resp. K-polystable$)$ if and only if $(Y,B)$ is K-semistable $($resp. K-polystable$)$.
    \item If one of $(X,D)$ or $(Y,B)$ is K-unstable, then $\delta(X,D)=\delta(Y,B)$.
\end{enumerate}
\end{cor}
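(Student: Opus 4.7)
Let $G = \Gal(X/Y)$ be the (finite) Galois group; the crepant condition $K_X + D = \pi^*(K_Y + B)$ makes $G$ act on $(X,D)$ fixing the pair, with $(Y,B)=(X,D)/G$ and $G\subseteq \Aut(X,D)$. I would prove $\delta(\Xkb, \Dkb) = \delta(\Ykb, B_{\kbar})$ as two matching inequalities. For $\delta(\Xkb,\Dkb) \le \delta(\Ykb, B_{\kbar})$: pull back any $v \in \Val^*_{\Ykb}$ to an extension $w \in \Val^*_{\Xkb}$ with ramification index $e$; the crepant relation gives $A_{X,D}(w) = e\,A_{Y,B}(v)$, and in the divisorial case $w = \ord_E$ (with $E$ the $G$-invariant preimage of $F\subset\Ykb$), the identities $\pi^*F = eE$, $\vol_X(\pi^*M) = |G|\cdot\vol_Y(M)$, and the normalization $(-K_X-D)^n = |G|(-K_Y-B)^n$ plug into the integral formula for $S$ to give $S_{(X,D)}(E) = e\cdot S_{(Y,B)}(F)$; hence $A/S$ ratios agree. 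For the reverse inequality, I apply Theorem \ref{thm:G-delta=delta for boundary} to $(X,D)$ with acting group $G$ and complete linear series $V_m = \tfrac{1}{m}|-m(K_X+D)|$, mimicking the proof of Theorem \ref{thm:G-delta=delta}; this yields $\delta(\Xkb,\Dkb) = \inf_E A_{X,D}(E)/S(E)$ over $G$-invariant geometrically irreducible divisors $E$ over $X$ that are lc places of complements. Each such $E$ descends via $\pi$ to a divisor $F = E/G$ over $Y$ with equal $A/S$-ratio, so the infimum dominates $\delta(\Ykb, B_{\kbar})$.

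\textbf{Plan for (1), K-semistability.} This is immediate from (2) together with the characterization K-semistable $\Leftrightarrow \delta \ge 1$: the always-valid inequality $\delta(\Xkb,\Dkb) \le \delta(\Ykb,B_{\kbar})$ and the equality in the K-unstable case force $(X,D)$ and $(Y,B)$ to be simultaneously K-semistable.

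\textbf{Plan for (1), K-polystability.} I would route through $G$-equivariant K-polystability. Quotient and pullback give a bijection between test configurations of $(Y,B)$ and $G$-equivariant test configurations of $(X,D)$, under which generalized Futaki invariants agree (the factor $|G|$ from $(\pi^*\cM)^{n+1} = |G|\cdot\cM^{n+1}$ cancels the $|G|$ in the normalization $(-K_X-D)^n = |G|(-K_Y-B)^n$) and K-semistable special test configurations correspond (K-semistability of central fibers by the K-ss part of (1); plt is preserved under crepant finite covers). Applying Theorem \ref{thm:test G-Kps by stc} to both sides, combined with the rigidity of the $G$-cover structure $\pi: X \to Y$ in a connected flat family over $\bA^1$ (since finite \'etale $G$-covers are controlled by a discrete surjection from $\pi_1^{\text{\'et}}(Y)$), yields: $(X,D)$ is $G$-equivariantly K-polystable iff $(Y,B)$ is K-polystable. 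Since $G$ is finite and hence reductive, Corollary \ref{cor:equivariant}(2) gives the equivalence of $G$-equivariant K-polystability and geometric K-polystability for $(X,D)$; the converse direction is trivial. Chaining these equivalences yields the desired statement.

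\textbf{Main obstacle.} The delicate step lies in the K-polystable direction, specifically verifying that a $G$-equivariant K-semistable special degeneration of $(X,D)$ whose $G$-quotient is isomorphic to $(Y,B)$ must itself be $G$-equivariantly isomorphic to $(X,D)$; this reduces, via Theorem \ref{thm:test G-Kps by stc}, to the rigidity of Galois cover data in a connected flat family over $\bA^1$. The other potentially fiddly checks (Futaki equality under the TC bijection, and that $G$-invariant divisors over $X$ descend to divisors over $Y$ preserving $A/S$) are routine bookkeeping once (2) and Corollary \ref{cor:equivariant}(2) are in hand.
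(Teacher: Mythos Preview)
Your proposal is correct and follows essentially the same route as the paper: both directions of the $\delta$-equality rest on the identity $A_{X,D}(E)/S_{X,D}(E)=A_{Y,B}(F)/S_{Y,B}(F)$ for a $G$-invariant divisor $E$ over $X$ and its quotient $F$ over $Y$, with Theorem~\ref{thm:G-delta=delta} (or its proof via Theorem~\ref{thm:G-delta=delta for boundary}) supplying the $G$-invariant divisor on the K-unstable side; the K-polystable part goes through the test-configuration bijection and Corollary~\ref{cor:equivariant}(2), which is exactly the argument of \cite{LZ-equivariant}*{Theorem 1.2(2)} that the paper simply cites.

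One small caution on the step you yourself flag as the main obstacle: the cover $\pi$ is typically ramified along the branch locus of $(Y,B)$, so the appeal to $\pi_1^{\text{\'et}}(Y)$ for rigidity is not literally correct. The rigidity you need is rather that a $G$-equivariant special test configuration of $(X,D)$ has central fibre a crepant $G$-cover of the central fibre downstairs, and when the latter is isomorphic to $(Y,B)$ the cover must be $(X,D)$; this follows because the branch data is fixed by the boundary $B$ and the resulting $G$-cover of the complement of the branch locus is locally constant in the family (or, alternatively, by a direct deformation-theoretic argument for log-\'etale $G$-covers). This is handled in \cite{LZ-equivariant} and is not difficult, but it is not quite the $\pi_1^{\text{\'et}}$ statement you wrote.
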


\begin{proof}
Let $G=\Aut(f)$ be the Galois group. By Theorem \ref{thm:G-delta=delta} and Corollary \ref{cor:equivariant}, we may assume that the base field is algebraically closed. By \cite{Fuj-plt-blowup}*{Corollary 1.7} and the computations in \cite{Fuj-plt-blowup}*{Section 4.1}, we know that $\delta(X,D)\le \delta(Y,B)$ and $(Y,B)$ is K-semistable if $(X,D)$ is. For every $G$-invariant prime divisor $E$ over $X$, there exists a $G$-equivariant birational morphism $X'\to X$ and a prime divisor $F$ on $Y'=X'/G$ such that $E$ is a divisor on $X'$ and $\Supp(\pi^*F)=E$ (here we denote the induced map $X'\to Y'$ also by $\pi$). A direct computation as in \cite{Fuj-plt-blowup}*{Section 4.1} shows that 
\[
\frac{A_{X,D}(E)}{S_{X,D}(E)}=\frac{A_{Y,B}(F)}{S_{Y,B}(F)},
\]
thus by Theorem \ref{thm:G-delta=delta} we obtain $\delta(X,D)\ge \delta(Y,B)$ as long as $(X,D)$ is K-unstable. Hence $\delta(X,D)=\delta(Y,B)$ in this case. This proves (2) and also implies the K-semistable part in (1). The K-polystable part in (1) then follows from the same argument as in \cite{LZ-equivariant}*{Theorem 1.2(2)}.
\end{proof}

\begin{cor} \label{cor:valuative criterion}
Let $G$ be an algebraic group and let $(X,\Delta)$ be a log Fano pair with a $G$-action. Assume that $A_{X,\Delta}(E)\ge S(E)$ $($resp. $G$ is reductive and $A_{X,\Delta}(E) > S(E))$ for all $G$-invariant geometrically irreducible divisors $E$ over $X$. Then $(\Xkb,\Deltakb)$ is K-semistable $($resp. K-polystable$)$.
\end{cor}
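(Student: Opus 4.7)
The plan is to handle (1) by direct contradiction from Theorem \ref{thm:G-delta=delta}, and to handle (2) by upgrading the hypothesis to $G$-equivariant K-polystability of $(X,\Delta)$ over $k$ and then invoking Corollary \ref{cor:equivariant}(2). For (1), suppose for contradiction that $(\Xkb,\Deltakb)$ is not K-semistable. Theorem \ref{thm:G-delta=delta} then realizes $\delta(\Xkb,\Deltakb)<1$ as an infimum of $A_{X,\Delta}(E)/S(E)$ over $G$-invariant geometrically irreducible divisors $E$ over $X$ that are lc places of complements, so some such $E$ must satisfy $A_{X,\Delta}(E)<S(E)$, contradicting the hypothesis.

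For (2), part (1) shows $(\Xkb,\Deltakb)$ is K-semistable, and since any $G$-equivariant test configuration over $k$ base-changes to a test configuration over $\bar{k}$ with the same Futaki invariant, $(X,\Delta)$ is automatically $G$-equivariantly K-semistable. To upgrade to $G$-equivariant K-polystability, I would take any $G$-equivariant test configuration $(\cX,\Dtc;\cL)$ with $\Fut(\cX,\Dtc;\cL)=0$ and argue it must be a product. By \cite{LX-special-tc}*{Theorem 7}, used exactly as in the proof of Theorem \ref{thm:test G-Kps by stc}, it is automatically special. If it were non-product, the converse of Fujita's construction from \cite{Fuj-valuative-criterion} would associate to it a $G$-invariant geometrically irreducible dreamy divisor $E$ over $X$ satisfying
\[
\Fut(\cX,\Dtc;\cL)=A_{X,\Delta}(E)-S(E),
\]
exactly as in Proposition \ref{prop:G-valuative criterion}. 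The strict hypothesis $A>S$ would then force $\Fut>0$, a contradiction. Hence every $\Fut=0$ test configuration is a product, giving $G$-equivariant K-polystability, and Corollary \ref{cor:equivariant}(2) concludes that $(\Xkb,\Deltakb)$ is K-polystable.

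The only non-routine point is the equivariant form of the Fujita correspondence invoked in (2): one must verify that every non-product $G$-equivariant special test configuration arises from a $G$-invariant geometrically irreducible divisor over $X$ (via the plt blowup extracting the central fiber) with the identity $\Fut=A_{X,\Delta}(E)-S(E)$. This is precisely the input already used for Proposition \ref{prop:G-valuative criterion}, so no new technology beyond what is developed in the paper is required.
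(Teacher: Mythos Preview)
Your argument is correct. Part (1) matches the paper exactly. For part (2), the paper takes a slightly more direct route: rather than first establishing $G$-equivariant K-polystability and then invoking Corollary~\ref{cor:equivariant}(2), it argues by contradiction, assuming $(\Xkb,\Deltakb)$ is not K-polystable and then, exactly as in the proof of Corollary~\ref{cor:equivariant}, producing a non-trivial $G$-equivariant special test configuration (via the Kempf argument and the K-polystable degeneration) whose associated divisor $E$ satisfies $A_{X,\Delta}(E)=S(E)$, citing \cite{BHJ-DH-measure} or \cite{Fuj-valuative-criterion}*{Theorem 5.1} for the test-configuration-to-divisor direction. Both arguments rest on the same correspondence between non-product special test configurations and divisors; yours is a clean modular repackaging, while the paper's avoids re-proving an intermediate statement already contained in Corollary~\ref{cor:equivariant}. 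One small inaccuracy: the direction of the Fujita correspondence you need here (special test configuration $\to$ divisor) is the \emph{converse} of what is used in Proposition~\ref{prop:G-valuative criterion}, not literally ``the same input,'' though both directions are standard.
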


\begin{proof}
We only prove the K-polystable part since the K-semistable part follows directly from Theorem \ref{thm:G-delta=delta}. Assume that $G$ is reductive and $A_{X,\Delta}(E) > S(E)$ for all $G$-invariant geometrically irreducible divisors $E$ over $X$. Then $(\Xkb,\Deltakb)$ is K-semistable. If $(\Xkb,\Deltakb)$ is not K-polystable, then as in the proof of Corollary \ref{cor:equivariant} we see that $(X,\Delta)$ has a non-trivial $G$-equivariant special test configuration with K-semistable central fiber. This is induced by a $G$-invariant geometrically irreducible divisor $E$ over $X$ with $A_{X,\Delta}(E)=S(E)$ (see \cite{BHJ-DH-measure} or \cite{Fuj-valuative-criterion}*{Theorem 5.1}), a contradiction to our assumption. Thus $(\Xkb,\Deltakb)$ is K-polystable as desired.
\end{proof}

\begin{cor} \label{cor:Tian's criterion}
Let $G$ be an algebraic group and let $(X,\Delta)$ be a log Fano pair of dimension $n$ with a $G$-action.
\begin{enumerate}
    \item If $\alpha_G(X,\Delta)\ge \frac{n}{n+1}$, then $(\Xkb,\Deltakb)$ is K-semistable.
    \item If $G$ is reductive and $\alpha_G(X,\Delta) > \frac{n}{n+1}$, then $(\Xkb,\Deltakb)$ is K-polystable.
\end{enumerate}
\end{cor}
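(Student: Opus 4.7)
The plan is to deduce both statements from Corollary \ref{cor:valuative criterion} by establishing, for every $G$-invariant geometrically irreducible divisor $E$ over $X$, the chain of inequalities
\[
A_{X,\Delta}(E) \ge \alpha_G(X,\Delta)\cdot T(E) \ge \tfrac{n+1}{n}\,\alpha_G(X,\Delta)\cdot S(E).
\]
Once this is in place, the hypothesis $\alpha_G(X,\Delta) \ge \tfrac{n}{n+1}$ gives $A_{X,\Delta}(E) \ge S(E)$ for every such $E$, and Corollary \ref{cor:valuative criterion} yields K-semistability of $(\Xkb,\Deltakb)$; under the stronger hypothesis $\alpha_G(X,\Delta) > \tfrac{n}{n+1}$ with $G$ reductive, the inequality becomes strict and the K-polystable half of Corollary \ref{cor:valuative criterion} applies.

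First I would verify the left inequality, which is the one recorded informally in the definition of $\alpha_G$. The key observation is that $G$-invariance of $v = \ord_E$ allows one to promote a single section achieving $v$-multiplicity close to $m\,T_m(v)$ into a $G$-invariant sub-linear-system of $|{-}m(K_X+\Delta)|$: namely, the subspace
\[
\{ s \in H^0(X,-m(K_X+\Delta)) \,:\, v(s) \ge m\,T_m(v)\}
\]
is $G$-stable because $v$ is. Applying the definition of $\alpha_G$ to this linear system, picking a general member, and sending $m \to \infty$ yields $A_{X,\Delta}(E) \ge \alpha_G(X,\Delta)\cdot T(E)$.

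Next I would invoke the classical inequality $S(v) \le \tfrac{n}{n+1} T(v)$, valid for any $v \in \Val^*_X$ without any equivariance assumption. This follows from the integral representation $S(v)=\tfrac{1}{\vol(-K_X-\Delta)}\int_0^{T(v)}\vol\bigl(\cF_v^{\,t}\bigr)\,dt$ combined with the concavity bound on the restricted volumes, as recorded in \cite{BJ-delta}*{Section 3} (or \cite{FO-delta}).

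No serious obstacle is anticipated: the substantive work underlying both parts — Theorem \ref{thm:G-delta=delta} and Corollary \ref{cor:valuative criterion} — has already been carried out, and the present statement is essentially a repackaging of the classical Tian/Odaka--Sano estimate in the equivariant framework furnished by those results.
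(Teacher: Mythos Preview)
Your proposal is correct and follows essentially the same route as the paper: establish $A_{X,\Delta}(E)\ge \alpha_G(X,\Delta)\cdot T(E)$ for $G$-invariant $E$ (which the paper records as an immediate consequence of the definition of $\alpha_G$), combine with the inequality $S(E)\le \tfrac{n}{n+1}T(E)$ from \cite{BJ-delta}*{Proposition 3.11}, and feed the result into Corollary \ref{cor:valuative criterion}. Your added justification of the first inequality via the $G$-invariant sub-linear-system $\cF_v^{mT_m(v)}H^0(X,-m(K_X+\Delta))$ is exactly the reasoning implicit in the paper's remark following the definition of $\alpha_G$.
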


\begin{proof}
Suppose that $\alpha_G(X,\Delta)\ge \frac{n}{n+1}$. Then we have $A_{X,\Delta}(E)\ge \frac{n}{n+1}T(E)$ for any $G$-invariant geometrically irreducible divisor $E$ over $X$. By \cite{BJ-delta}*{Proposition 3.11}, we also have $S(E)\le \frac{n}{n+1}T(E)$ for any such divisor $E$, thus $A_{X,\Delta}(E)\ge S(E)$ for any $G$-invariant geometrically irreducible divisor $E$ over $X$. By Corollary \ref{cor:valuative criterion}, this implies that $(\Xkb,\Deltakb)$ is K-semistable. The proof of the K-polystable part is similar.
\end{proof}

Using equivariant K-stability and Corollary \ref{cor:equivariant}, we can also give algebraic proofs of the K-stability of some explicit Fano varieties. First we provide a short proof of the K-(poly)stability of del Pezzo surfaces (see e.g. \cites{Tian-dP,Che-lct-dP,PW-dP} for other proofs).

\begin{cor}
Let $X$ be a smooth complex del Pezzo surface of degree $d$. Then $X$ is K-polystable if and only if $X$ is not the blowup of one or two points on $\bP^2$ and it is K-stable if and only if $d\le 5$.
\end{cor}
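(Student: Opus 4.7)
The plan is to split the statement into three regimes according to the degree: the K-unstable cases (degrees $d = 8$ with $X = \mathbb{F}_1$ and $d = 7$), the strictly K-polystable toric cases ($\mathbb{P}^2$, $\mathbb{P}^1 \times \mathbb{P}^1$, and the degree $6$ del Pezzo), and the genuinely K-stable cases ($d \le 5$). Throughout I will combine Corollary \ref{cor:equivariant} with Corollary \ref{cor:Tian's criterion} and classical log canonical threshold estimates, so that the entire argument remains algebraic.

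First I would handle $X = \mathbb{F}_1$ and $X = \mathrm{Bl}_2 \mathbb{P}^2$. In both cases $\Aut(X)$ fixes a unique $(-1)$-curve $C$ (the negative section of $\mathbb{F}_1$ and the strict transform of the line through the two blown up points, respectively), as noted in Example 3.9. Using the Zariski decomposition of $-K_X - tC$, a routine volume integration gives $S(C) = \tfrac{7}{6}$ in the first case and $S(C) = \tfrac{25}{21}$ in the second, while $A_X(C) = 1$ in both. Hence $\delta(X) < 1$, so these are K-unstable and in particular not K-polystable. For the toric surfaces $\mathbb{P}^2$, $\mathbb{P}^1 \times \mathbb{P}^1$, and the degree $6$ del Pezzo, the identity component of the automorphism group contains a two-dimensional torus $T$. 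By Corollary \ref{cor:equivariant} it suffices to verify $T$-equivariant K-polystability, and by the toric description of $T$-equivariant test configurations this reduces to checking that the barycenter of the moment polytope of $-K_X$ is the origin, which is a direct combinatorial verification in each of the three polytopes. Since $\dim \Aut(X) > 0$ in these three cases, none is K-stable.

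For $1 \le d \le 5$ the group $\Aut(X)$ is finite (hence reductive), so K-polystability coincides with K-stability, and I would invoke Corollary \ref{cor:Tian's criterion}(2) with $G = \Aut(X)$. For $d \le 4$, the classical estimates of Cheltsov, Park--Won, Shramov and others give $\alpha(X) \ge \tfrac{2}{3}$, with strict inequality except in a few exceptional configurations; in those exceptional cases, the additional automorphisms present force $\alpha_{\Aut(X)}(X) > \tfrac{2}{3}$. For the quintic del Pezzo, $\Aut(X) \cong S_5$ and one appeals to the equivariant alpha computation $\alpha_{S_5}(X) > \tfrac{2}{3}$. In every case Corollary \ref{cor:Tian's criterion}(2) yields K-polystability, and since $\Aut(X)$ is finite this upgrades to K-stability.

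The main obstacle is the classical alpha-invariant input in Step 3, which requires a case-by-case analysis of log canonical thresholds on del Pezzo surfaces with small degree and in particular the more delicate equivariant analysis at $d = 5$; the novelty afforded by this paper is only the clean algebraic implication \emph{equivariant alpha bound $\Rightarrow$ K-stability}, via Corollary \ref{cor:Tian's criterion}. Steps 1 and 2 are, by contrast, essentially self-contained once Corollary \ref{cor:equivariant} and the example on $\Aut$-invariant $(-1)$-curves are in hand.
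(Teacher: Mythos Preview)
Your outline is correct, and your $S(C)$ computations for $\mathbb{F}_1$ and $\mathrm{Bl}_2\mathbb{P}^2$ are accurate, but the paper's proof takes a noticeably different and more self-contained route in each regime. For the K-unstable cases the paper does not compute $S(C)$ at all: it simply observes that $\Aut(X)$ is non-reductive, which already rules out K-polystability by \cite{ABHLX}. For the three toric cases $\mathbb{P}^2$, $\mathbb{P}^1\times\mathbb{P}^1$, and the degree $6$ del Pezzo, the paper bypasses moment polytopes and uses Corollary~\ref{cor:valuative criterion} directly: with $G=\Aut(X)$ there are no $G$-invariant curves and no $G$-fixed points, so there are literally no $G$-invariant geometrically irreducible divisors over $X$ to test, and K-polystability is immediate. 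The sharpest contrast is at degrees $4$ and $5$: rather than appeal to (equivariant) alpha-invariant estimates, the paper takes $G=S_5$ (resp.\ $(\mathbb{Z}/2\mathbb{Z})^4$), notes that $\Pic(X)^G=\mathbb{Z}\cdot[-K_X]$ and that $G$ has no fixed points, so every $G$-invariant prime divisor over $X$ is a curve $C\sim -rK_X$ on $X$ itself; a one-line computation then gives $\beta_X(C)=1-\tfrac{1}{3r}>0$, and Corollary~\ref{cor:valuative criterion} finishes. Only for $d\le 3$ does the paper defer to an external source, namely \cite{AZ-K-adjunction}*{Corollary 4.9}. Thus the paper's argument for $d\ge 4$ is entirely internal to the valuative criterion and avoids the case-by-case log canonical threshold analysis that you flag as the main obstacle; your route works, but it outsources precisely the part that the paper's use of Corollary~\ref{cor:valuative criterion} makes elementary.
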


\begin{proof}
It is well known that the blow up of one or two points on $\bP^2$ is not K-polystable since the automorphism group is not reductive. Let $X=\bP^2$, $\bP^1\times \bP^1$ or the blow up of $\bP^2$ at three points and let $G=\Aut(X)$. Then it is easy to see that there are no $G$-invariant curves or $G$-fixed point on $X$, thus $X$ is K-polystable by Corollary \ref{cor:valuative criterion}. If $X$ has degree $5$ (resp. $4$), then $G=S_5$ (resp. $G=(\bZ/2\bZ)^4$) acts on $X$, $\Pic(X)^G=\bZ\cdot [-K_X]$ and there are no $G$-fixed points on $X$ (see e.g. \cite{DI-cremona}). It follows that every $G$-invariant prime divisor over $X$ is a $G$-invariant curve $C\sim -rK_X$ on $X$ for some $r\ge 1$; an easy calculation shows that $\beta_X(C)=1-\frac{1}{3r}>0$, thus $X$ is K-polystable by Corollary \ref{cor:valuative criterion}. Since $X$ has finite automorphism group in this case, it is in fact K-stable. Finally if $d\le 3$ then $X$ is K-stable by \cite{AZ-K-adjunction}*{Corollary 4.9}.
\end{proof}

As another example, we show that:

\begin{cor} \label{cor:fermat and two quadric}
The following Fano manifolds are K-stable:
\begin{enumerate}
    \item \cite{Tian-Fermat} Fermat hypersurfaces $(x_0^d+\cdots+x_{n+1}^d=0)\subseteq \bP^{n+1}$ $(3\le d\le n+1)$.
    \item \cite{AGP-two-quadric} Complete intersection of two quadrics $Q_1\cap Q_2\subseteq \bP^{n+2}$.
\end{enumerate}
\end{cor}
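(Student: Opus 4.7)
The plan is to apply the equivariant Tian criterion (Corollary \ref{cor:Tian's criterion}) in each case with a suitable finite symmetry group $G\subseteq \Aut(X)$. Since $\Aut(X)$ is finite (hence reductive) for these Fano manifolds in the stated degree range, it suffices to exhibit $G$ with $\alpha_G(X) > \frac{n}{n+1}$; the K-polystability conclusion is then automatically K-stability because $\Aut(X)$ is discrete and hence no product test configurations give obstruction beyond the trivial one.

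For the Fermat hypersurface $X=\{x_0^d+\cdots+x_{n+1}^d=0\}\subseteq \bP^{n+1}$, take $G=\mu_d^{n+1}\rtimes S_{n+2}$, acting by coordinate scaling (modulo the overall diagonal) and coordinate permutation. By convexity of the log canonical threshold and averaging over $G$, the bound $\alpha_G(X)>\frac{n}{n+1}$ reduces to controlling $\lct(X,\frac{1}{m}D)$ from below for each $G$-invariant effective $\bQ$-divisor $D\sim_\bQ -K_X$. The $G$-action stratifies $X$ by orbits whose minimum size is explicit (the smallest orbits live in intersections of $X$ with coordinate flags). Since $G$-invariance forces $\mult_y D = \mult_x D$ for all $y\in G\cdot x$, a degree bound $\sum_{y\in G\cdot x}\mult_y D \le (-K_X)^{n-1}\cdot D$ obtained by slicing with a generic $G$-invariant complete intersection curve controls $\mult_x D$, and hence the lct at $x$, from below by the required amount.

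For the complete intersection of two quadrics $X=Q_1\cap Q_2\subseteq\bP^{n+2}$, simultaneously diagonalize the pencil to $Q_1=\sum x_i^2$ and $Q_2=\sum \lambda_i x_i^2$ with distinct $\lambda_i$. The subgroup $G=(\mu_2)^{n+2}\subseteq \Aut(X)$ of coordinate sign changes (modulo the global sign) is reductive, and an analogous orbit-and-multiplicity analysis yields $\alpha_G(X)>\frac{n}{n+1}$. When the $\lambda_i$ admit additional discrete symmetries one may enlarge $G$ by these to strengthen the estimate.

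The main obstacle is the explicit alpha estimate via the orbit/multiplicity bound. The argument itself is classical and fully algebraic, but one has to check that the worst-case stratum (smallest orbits, coming from intersections of $X$ with low-dimensional coordinate subspaces) contributes a strict inequality and not merely equality. The contribution of this paper is not the alpha bound itself but Corollary \ref{cor:Tian's criterion}, which replaces the analytic input of \cites{Tian-Fermat,AGP-two-quadric} and shows that the purely algebraic $G$-alpha invariant is enough to conclude geometric K-stability.
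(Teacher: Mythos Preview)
The paper takes a different route and does not use the Tian criterion. It exhibits, for each $X$, the Galois cover $\pi\colon X \to \bP^n$ given by $x_i \mapsto x_i^d$ (Fermat) or $x_i \mapsto x_i^2$ (two quadrics), observes that it is crepant over a hyperplane-arrangement pair $(\bP^n, c\sum H_i)$, invokes \cite{Fuj-hyperplane}*{Corollary 1.6} for the K-polystability of that pair, and then applies Corollary \ref{cor:finite Galois} to lift K-polystability to $X$. No $\alpha$-invariant is ever computed.

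Your approach has a genuine gap in case (2). For a \emph{generic} smooth $X = Q_1 \cap Q_2 \subseteq \bP^{n+2}$ the automorphism group is exactly the sign group $G \cong (\mu_2)^{n+2}$, so $G$ cannot be enlarged. The hyperplane section $H_0|_X = \{x_0 = 0\}\cap X$ is a smooth $G$-invariant divisor, and since $-K_X = (n-1)H|_X$, the $G$-invariant divisor $D = (n-1)\,H_0|_X \sim -K_X$ satisfies $\lct(X;D) = \tfrac{1}{n-1}$. Hence $\alpha_G(X) \le \tfrac{1}{n-1} < \tfrac{n}{n+1}$ for all $n\ge 3$, and the hypothesis of Corollary \ref{cor:Tian's criterion} is simply not met. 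This is precisely why the paper works with $\delta$ via Corollary \ref{cor:finite Galois} rather than with $\alpha$: Fujita's result yields K-polystability of the quotient pair directly, even though its $\alpha$-invariant is small. For case (1) your strategy is more plausible (and $\alpha_G \ge 1$ for Fermat hypersurfaces with the full symmetry group can indeed be established), but the orbit/multiplicity heuristic you sketch is far too crude to reach $\alpha_G > \tfrac{n}{n+1}$: the bound $\lct_x(D)\ge 1/\mult_x(D)$ together with a degree count on $\mult_x(D)$ does not give what you need, and a correct computation of $\alpha_G$ requires substantially more than you indicate.
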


Combined with \cites{Xu-quasimonomial,BLX-openness}, this gives an algebraic proof that a general Fano hypersurface of degree at least $3$ is K-stable.

\begin{proof}
First let $X$ be the Fermat hypersurfaces $(x_0^d+\cdots+x_{n+1}^d=0)\subseteq \bP^{n+1}$ $(3\le d\le n+1)$ and consider the morphism $f\colon X\to \bP^n$ given by $[x_0:\cdots:x_{n+1}]\mapsto [x_0^d:\cdots:x_n^d]$. Let $y_0,\cdots,y_n$ be the homogeneous coordinates on the target $\bP^n$, let $H_i=(y_i=0)\subseteq \bP^n$ ($i=0,1,\cdots,n$) and let $H_{n+1}=(y_0+\cdots+y_n=0)$. Then it is clear that $f$ is Galois and $K_X=f^*\left(K_{\bP^n}+(1-\frac{1}{d})(H_0+\cdots+H_{n+1})\right)$. By \cite{Fuj-hyperplane}*{Corollary 1.6}, the pair $(\bP^n,(1-\frac{1}{d})(H_0+\cdots+H_{n+1}))$ is K-polystable, thus by Corollary \ref{cor:finite Galois}, the Fermat hypersurface $X$ is also K-polystable. It is indeed K-stable since $\Aut(X)$ is finite when $d\ge 3$. 

The argument is similar for the complete intersection of two quadrics $X=Q_1\cap Q_2\subseteq \bP^{n+2}$. Up to a change of coordinates, we may assume that $Q_1=(x_0^2+\cdots+x_{n+2}^2=0)$ and $Q_2=(a_0 x_0^2+\cdots+a_{n+2} x_{n+2}^2=0)$ for some mutually distinct coefficients $a_0,\cdots,a_{n+2}$. Consider the Galois morphism $g\colon X\to \bP^n$ given by $[x_0:\cdots:x_{n+2}]\mapsto [x_0^2:\cdots:x_n^2]$. We may identify $\bP^n$ with the linear subspace $(x_0+\cdots+x_{n+2}=a_0 x_0+\cdots+a_{n+2} x_{n+2}=0)\subseteq \bP^{n+2}$ and let $H_i\subseteq \bP^n$ ($i=0,1,\cdots,n+2$) be the hyperplane $(x_i=0)$. Then we have $K_X=g^*\left(K_{\bP^n}+\frac{1}{2}(H_0+\cdots+H_{n+2})\right)$. By \cite{Fuj-hyperplane}*{Corollary 1.6}, the pair $(\bP^n,\frac{1}{2}(H_0+\cdots+H_{n+2}))$ is K-polystable, thus by Corollary \ref{cor:finite Galois}, the complete intersection $X$ is also K-polystable. Since $\Aut(X)$ is again finite, we conclude that it is K-stable.
\end{proof}

\bibliography{ref}

\end{document}